\newtheorem{thm}{Theorem}[section]
\newtheorem{cor}[thm]{Corollary}
\newtheorem{lem}[thm]{Lemma}
\newtheorem{prop}[thm]{Proposition}
\theoremstyle{definition}
\newtheorem{defn}[thm]{Definition}
\theoremstyle{remark}
\newtheorem{rem}[thm]{Remark}
\DeclareMathOperator*{\supp}{supp}
\newcommand{\R}{\mathbb{R}}  % Números reales
\newcommand{\N}{\mathbb{N}}  % Números naturales
\newcommand{\h}{\mathcal{H}}  % Operador bilineal
\newcommand{\C}{\mathcal{C}}  % Conjunto cerrado 
\newcommand{\di}{\mathrm{d}}  % Operador diferencial
\newcommand{\ve}{\varepsilon}  % epsilon
\numberwithin{equation}{section}
\begin{document}

\title[Optimization problem]{An optimization problem for the first eigenvalue of the $p-$fractional Laplacian}

\author[L. Del Pezzo, J. Fern\'andez Bonder and L. L\'opez R\'ios]{Leandro Del Pezzo, Juli\'an Fern\'andez Bonder and Luis L\'opez R\'ios}

\address{Departamento de Matem\'atica FCEN - Universidad de Buenos Aires and IMAS - CONICET. Ciudad Universitaria, Pabell\'on I (C1428EGA)
Av. Cantilo 2160. Buenos Aires, Argentina.}

\email[J. Fernandez Bonder]{jfbonder@dm.uba.ar}
\urladdr{http://mate.dm.uba.ar/~jfbonder}

\email[L. Del Pezzo]{ldpezzo@dm.uba.ar}
\urladdr{http://cms.dm.uba.ar/Members/ldpezzo/}

\email[L. L\'opez R\'ios]{llopez@dm.uba.ar}

%35J92  	Quasilinear elliptic equations with $p$-Laplacian
%49R05  	Variational methods for eigenvalues of operators
%47A75  	Eigenvalue problems
%47J10  	Nonlinear spectral theory, nonlinear eigenvalue problems
%35P30  	Nonlinear eigenvalue problems, nonlinear spectral theory
\subjclass[2010]{35P30, 35J92, 49R05}

\keywords{Optimization, Fractional Laplacian, Nonlinear eigenvalues}

\begin{abstract}
In this paper we analyze an eigenvalue problem related to the nonlocal $p-$Laplace operator plus a potential. After reviewing some elementary properties of the first eigenvalue of these operators (existence, positivity of associated eigenfunctions, simplicity and isolation) we investigate the dependence of the first eigenvalue on the potential function and establish the existence of some {\em optimal} potentials in some admissible classes.
\end{abstract}

\maketitle

\section{Introduction}\label{intro}

In this paper we study the following non-linear non-local eigenvalue problem
\begin{equation} \label{eq:autovalores}
  \left\lbrace 
	\begin{aligned}
		(-\Delta_{p})^s u+ V(x) |u|^{p-2} u &= \lambda |u|^{p-2}u && 
		\text{in } \Omega, \\
		u &= 0 &&\text{in } \mathbb{R}^n\setminus\Omega,
	\end{aligned}
  \right.
\end{equation}
where $\Omega\subset\mathbb{R}^n$, $n\geq 1$, is a smooth bounded domain, $0<s<1 < p <\infty$, and $\lambda \in \R$. The potential $V$ is in $L^q(\Omega)$, $\max\{1,\tfrac{n}{sp}\}<q<\infty$, and $(-\Delta_p)^s$ is the fractional $p$-Laplacian operator, which, in a suitable regularity class (see \cite{IMS, KKL}), is given by  
\begin{equation}\label{eq:splap}
(-\Delta_p)^s u(x) := \text{p.v.} \int_{\R^n} \frac{|u(x)-u(y)|^{p-2}(u(x)-u(y))}{|x-y|^{n+sp}}\, \di y. 
\end{equation}

%It is not difficult to see that this operator can be extended in the distributional sense as
%\[
%\langle (-\Delta_p)^s u, \phi\rangle = \iint_{\R^n\times \R^n} \frac{|u(x)-u(y)|^{p-2}(u(x)-u(y))%(\phi(x) - \phi(y))}{|x-y|^{n+sp}} \, \di x \di y,
%\]
%for every $\phi\in C^\infty_c(\Omega)$. 
Observe that, in the case $p=2,$ $(-\Delta_2)^s=(-\Delta)^s$ is the usual fractional Laplace operator.

First, we devoted the paper to the study of problem \eqref{eq:autovalores}. For this eigenvalue problem we prove the existence of a first eigenvalue and then analyze properties of the associated eigenfunction.

Once the existence of this first eigenvalue is established we arrive at the main point of this article, that is the optimization of this first eigenvalue with respect to the potential function $V$.

This type of problems appears naturally in the study of the fractional Shr\"odinger equation. The eigenvalues and eigenfunctions of \eqref{eq:autovalores} are the associated fundamental states of the system. This is of particular interest in the case $p=2$. See \cite{Laskin}. We want to stress that all the results in this paper are new even in the linear case that corresponds to $p=2$. 

The problem that we want to address is the following. Suppose that we know that the potential $V$ possesses some bound (say $\|V\|_q\le M$), then what can be said about the fundamental state of the system? That is, if we only know the information $\|V\|_q$ for some $q>1$, then what bounds can we have for the first eigenvalue of \eqref{eq:autovalores} and what information can we deduce for the associated eigenfunction. 

In the classical linear setting, that is when $p=2$ and when the fractional Laplacian is replaced by the standard Laplacian operator, this problem was first studied in \cite{Ashbaugh} and then extended to the $p-$Laplacian operator in \cite{FBDP}.

As far as we know, no investigation was done so far in the fractional setting.

\subsection*{Organization of the paper}
After this short introduction, we include a section (Section 2) where some preliminaries on fractional Sobolev spaces that are used throughout the paper are collected.

In Section 3 we analyze problem \eqref{eq:autovalores} and show the existence of a first eigenvalue, together with a nonnegative associated eigenfunction. Moreover, we show the simplicity and isolation of this eigenvalue.

In Section 4, we study some properties about the dependence of the principal eigenvalue on the potential function $V$.

Finally, in Section 5, we prove the main results of the paper that is the study of the optimization problem for \eqref{eq:autovalores} where $V$ is restricted to belong to some ball in $L^q$.

%%%%%%%%%%%%%%%%%%%%%%%%%%%%%%%%%%%%%%%%%%%%%%%%%%%%%%%%%%%%%%%%%%%%%%%%%%%%%%%%%
	
\section{Preliminaries}\label{Prel}

\subsection{Fractional spaces}

Let us recall some well known facts about fractional spaces. Among the many references in this subject, let us mention \cite{Adams, DD, Grisvard}, which are enough for our purposes. Also, the excellent review article \cite{DNPV} will cover anything that is needed here. Throughout this section we consider $0<s<1$ and $1 < p < \infty$ to be fixed. Given an open set $\Omega \subset \R^n$, the fractional Sobolev space $W^{s,p}(\Omega)$ is defined by
\[
	W^{s,p}(\Omega) = \left\{ u\in L^p(\Omega)\colon
	\dfrac{u(x)-u(y)}{|x-y|^{\frac{n}{p}+s}}\in L^p(\Omega\times\Omega)\right\}.
\]
This space is endowed with the norm
\[
	\|u\|_{s,p;\Omega} \coloneqq \|u\|_{W^{s,p}(\Omega)} = 
	\left(\|u\|_{p;\Omega}^p + [u]_{s,p,\Omega}^p
	\right)^{\tfrac1p},
\]
where
\[
	\|u\|_{p;\Omega} \coloneqq \|u\|_{L^p(\Omega)}  = \left( \int_\Omega 
	|u|^p\, \di x \right) ^{\tfrac1p}
\]
and 
\[
	[u]_{s,p;\Omega} \coloneqq \left( 
	\iint_{\Omega\times \Omega}\dfrac{|u(x)-u(y)|^p}{|x-y|^{n+sp}}\,\di x \di y \right) ^ {\tfrac 1p}
\]
is called the Gagliardo seminorm. If $\Omega = \R^n$, we shall omit the set in the notation:
\[
  \|u\|_{s,p} \coloneqq \|u\|_{s,p;\R^n}, \quad  \|u\|_p \coloneqq \|u\|_{p;\R^n}    \quad \text{and} \quad [u]_{s,p} \coloneqq [u]_{s,p;\R^n}.
\]
With the above norm, $W^{s,p}(\Omega)$ is a reflexive Banach space, see  \cite{Adams,DD}.

The previous fractional space is a good candidate to find ``weak solutions'' to problem \eqref{eq:autovalores}. However, to deal with the boundary condition, we preliminarily restrict ourselves to two special subspaces:

\begin{enumerate}[(i)]
\item $W_0^{s,p}(\Omega)$: the closure in $W^{s,p}(\Omega)$ of the space $C^\infty_c(\Omega)$;
\item $\widetilde{W}^{s,p}(\Omega)$: the space of all $u \in W^{s,p}(\Omega)$ such that $\tilde u \in W^{s,p}(\R^n)$, where $\tilde u$ is the extension	by zero of $u$, outside of $\Omega$. This space is endowed with the norm
\[
\|u\|_{\widetilde{W}^{s,p}(\Omega)} \coloneqq \|\tilde u\|_{s,p} = \|\tilde u\|_{W^{s,p}(\R^n)}.
\] 
\end{enumerate} 

\begin{rem} \label{zero-extension}
From now on, given $u \in \widetilde{W}^{s,p}(\Omega)$ we implicitly suppose that it is defined in the whole space $\R^n$ extending by zero outside of $\Omega$; moreover, we denote this extension by the same letter $u$.
\end{rem}
	
The next result relates the spaces in (i) and (ii). For the proof we refer the reader to \cite[Corollary 1.4.4.5]{Grisvard}.

\begin{thm}\label{thm:teouce}
Let $\Omega\subset\mathbb{R}^n$ be bounded open set with Lipschitz boundary. If $s\neq\frac{1}{p}$, then
\[
  W^{s,p}_0(\Omega) = \widetilde{W}^{s,p}(\Omega).
\]
Furthermore, when $0< s <\frac{1}{p}$ we have
\[
  W^{s,p}_0(\Omega) = \widetilde{W}^{s,p}(\Omega) = W^{s,p}(\Omega).
\]  
\end{thm}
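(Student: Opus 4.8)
The plan is to reduce the statement to a model computation on the half-space and from there to an essentially one-dimensional fractional Hardy inequality, the borderline case $sp=1$ of which is exactly what the hypothesis $s\neq\frac1p$ excludes.

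First I would localize: using a finite partition of unity subordinate to a cover of $\overline\Omega$ together with bi-Lipschitz charts flattening $\partial\Omega$ — both available because $\partial\Omega$ is Lipschitz — and the standard facts that multiplication by a smooth compactly supported cut-off and composition with a bi-Lipschitz map are bounded operations on $W^{s,p}$, the problem is reduced to identifying the three spaces when $\Omega$ is replaced by the half-space $\R^n_+=\{x_n>0\}$, for functions with bounded support. The inclusion $\widetilde W^{s,p}(\Omega)\subseteq W_0^{s,p}(\Omega)$, which holds for every $s$, is then the easy part: given $u\in\widetilde W^{s,p}(\Omega)$ with zero extension $\tilde u\in W^{s,p}(\R^n)$, on the half-space the translates $\tilde u(\cdot-\delta e_n)$ are supported in $\{x_n\ge\delta\}$ and converge to $\tilde u$ in $W^{s,p}(\R^n)$ as $\delta\to0^+$ by continuity of translations, so mollifying each translate at a scale $<\delta/2$ produces functions in $C_c^\infty(\R^n_+)$ approximating $\tilde u$; transporting back through the charts gives $u\in W_0^{s,p}(\Omega)$.

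For the converse $W_0^{s,p}(\Omega)\subseteq\widetilde W^{s,p}(\Omega)$ when $s\neq\frac1p$, I would show the uniform bound $\|\tilde u\|_{s,p}\le C\|u\|_{s,p;\Omega}$ for $u\in C_c^\infty(\Omega)$ and pass to the limit. Splitting the double integral defining $[\tilde u]_{s,p}$ into the contributions of $\Omega\times\Omega$, $\Omega\times(\R^n\setminus\Omega)$ and its symmetric counterpart (the part over $(\R^n\setminus\Omega)\times(\R^n\setminus\Omega)$ vanishing), and using $\int_{\R^n\setminus\Omega}|x-y|^{-n-sp}\,\di y\le C\,d(x,\partial\Omega)^{-sp}$ for $x\in\Omega$ (a consequence of $B(x,d(x,\partial\Omega))\subseteq\Omega$), this reduces to the fractional Hardy inequality $\int_\Omega |u(x)|^p\,d(x,\partial\Omega)^{-sp}\,\di x\le C\|u\|_{s,p;\Omega}^p$. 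After the flattening and Fubini in the tangential variables this follows from the one-dimensional estimate $\int_0^\infty |f(t)|^p t^{-sp}\,\di t\le C[f]_{s,p;(0,\infty)}^p$ for compactly supported $f$, valid exactly when $sp\neq1$. I expect this Hardy inequality — and pinning down its breakdown at $sp=1$ — to be the main obstacle; the localization and the translation/mollification steps are routine.

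Finally, for $0<s<\frac1p$ I would upgrade the Hardy inequality so that it holds for \emph{every} $u\in W^{s,p}(\Omega)$ with $\|u\|_{s,p;\Omega}$ on the right, not merely for functions vanishing at the boundary; this gives $\tilde u\in W^{s,p}(\R^n)$ for all such $u$, hence $W^{s,p}(\Omega)\subseteq\widetilde W^{s,p}(\Omega)$, the reverse inclusion being immediate from the definitions. On the half-space the needed statement is the one-dimensional bound $\int_0^\infty |f(t)|^p t^{-sp}\,\di t\le C\big(\|f\|_{L^p(0,\infty)}^p+[f]_{s,p;(0,\infty)}^p\big)$ with no vanishing hypothesis, which is available precisely because for $sp<1$ the weight $t^{-sp}$ is locally integrable and tame enough to be dominated by the full $W^{s,p}$ norm. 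Combining this with the previous steps, and patching through the charts, completes the proof.
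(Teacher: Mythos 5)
In the paper this theorem is not proved but simply quoted from \cite[Corollary~1.4.4.5]{Grisvard}, so there is no in-paper argument to compare with; your outline essentially reconstructs the standard proof that Grisvard gives: localization by a Lipschitz partition of unity and flattening charts, the easy inclusion $\widetilde W^{s,p}(\Omega)\subseteq W_0^{s,p}(\Omega)$ by inward translation and mollification, and the hard inclusion reduced to a fractional Hardy inequality with boundary weight $d(x,\partial\Omega)^{-sp}$, which degenerates exactly at $sp=1$. The overall strategy is correct and is the intended one.

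There is, however, one false statement in the write-up. You assert that the one-dimensional, seminorm-only estimate $\int_0^\infty |f(t)|^p t^{-sp}\,\di t \le C[f]_{s,p;(0,\infty)}^p$ holds for compactly supported $f$ \emph{exactly} when $sp\neq 1$. It does hold for $sp>1$ (the fractional Hardy inequality on a half-line), but it fails for $sp<1$: take $f_R$ a fixed-profile smooth bump equal to $1$ on $[1,R]$ and supported in $[\tfrac12,R+\tfrac12]$. A direct check shows $[f_R]_{s,p;(0,\infty)}^p$ stays bounded as $R\to\infty$ (each transition region contributes $O(1)$ and the long-range cross terms decay like $R^{-sp}$ or faster), while $\int_0^\infty |f_R|^p t^{-sp}\,\di t \ge \int_1^R t^{-sp}\,\di t \sim R^{1-sp}\to\infty$. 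The fix is the one you already give in your last paragraph: for $sp<1$ one must keep $\|f\|_{L^p}^p$ on the right-hand side (equivalently, restrict to $f$ supported in a fixed bounded interval, which is the situation at hand since $\Omega$ is bounded). Since the case $sp<1$ of the theorem is in any event subsumed by your final step, which proves the stronger inclusion $W^{s,p}(\Omega)\subseteq\widetilde W^{s,p}(\Omega)$ via the full-norm Hardy inequality, the slip does not affect the conclusion; but the seminorm-only $1$D inequality should only be invoked for $sp>1$. One further small point: pushing the translated-and-mollified half-space approximants back through a Lipschitz chart yields compactly supported Lipschitz functions in $\Omega$, not elements of $C_c^\infty(\Omega)$, so a final mollification inside $\Omega$ is needed to conclude membership in $W_0^{s,p}(\Omega)$.
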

	
The following results are fractional versions of the classical embedding theorems, they can be found in \cite[Corollary~4.53 and Theorem~ 4.54]{DD}, see also \cite{Adams}. We first need the concept of extension domain.

\begin{defn}[Extension domain]
We say that an open set $\Omega \subset \R^n$ is an extension domain for $W^{s,p}$ if there exists a positive constant $C=C(n,s,p,\Omega)$ such that: for every function $u \in W^{s,p}(\Omega)$ there exists $\tilde u \in W^{s,p}(\R^n)$ with $\tilde u(x) = u(x)$ for all $x \in \Omega$ and $\| \tilde u \|_{s,p} \le C \| u \|_{s,p;\Omega}$. Some important examples of extension domains are the bounded domains with Lipschitz boundary, see \cite[Section 1.2]{Grisvard}.
\end{defn}

Let us recall also the definition of the fractional Sobolev conjugate of $p$:
\[
  p_s^* = \begin{cases}
		    	\dfrac{np}{n-sp} &\text{if } sp<n,\\
				\infty & \text{if } sp\ge n.	
		  \end{cases}
\]

\begin{thm}\label{thm:teoembcont}
Let $\Omega \subset \R^n$ be an extension domain for $W^{s,p}$. Then we have: 
\begin{itemize}
  \item if $sp < n$, $W^{s,p}(\Omega)$ is continuously embedded in $L^q(\Omega)$ for any $q \in [p,p_s^*];$ 
  \item if $sp = n$, $W^{s,p}(\Omega)$ is continuously embedded in $L^q(\Omega)$ for any $q \in [p,\infty);$ 
  \item If $sp>n$, $W^{s,p}(\Omega)$ is continuously embedded in $C^{0,\alpha}(\overline{\Omega})$ for any $\alpha \in (0,s-\tfrac{n}{p}]$.
\end{itemize}
	\end{thm}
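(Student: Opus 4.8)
These are classical facts, quoted from \cite[Corollary~4.53 and Theorem~4.54]{DD} (see also \cite{Adams, DNPV}); let me nonetheless sketch how I would prove them. Through the extension property of $\Omega$ everything reduces to facts on $\R^n$, and the analytic core is the \emph{homogeneous fractional Sobolev inequality}: when $sp<n$ there is $C=C(n,s,p)$ with
\[
\|u\|_{p_s^*;\R^n}\le C\,[u]_{s,p}\qquad\text{for all }u\in W^{s,p}(\R^n).
\]
The plan for this inequality is a Hedberg-type argument: one bounds $|u(x)-(u)_{B_r(x)}|$ pointwise by $Cr^{s}\big(\mathcal{M}(g^p)(x)\big)^{1/p}$, where $(u)_{B_r(x)}$ denotes the average of $u$ over $B_r(x)$, $g(x):=\big(\int_{\R^n}|u(x)-u(y)|^p|x-y|^{-n-sp}\,\di y\big)^{1/p}$ and $\mathcal{M}$ is the Hardy--Littlewood maximal operator; then one sums over dyadic radii and lets $r\to\infty$ (so $(u)_{B_r(x)}\to0$, since $u\in L^p(\R^n)$), obtaining a pointwise bound for $|u(x)|$ whose $L^{p_s^*}(\R^n)$ norm is controlled, via the $L^{p_s^*/p}$-boundedness of $\mathcal{M}$ (legitimate since $p_s^*>p$), by $\|g\|_p=[u]_{s,p}$. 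For $p=2$ this is immediate from the Hardy--Littlewood--Sobolev inequality on the Fourier side; a general exponent $p>1$ is the one genuinely nontrivial point.

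Granting the inequality, the case $sp<n$ is quick. Given $u\in W^{s,p}(\Omega)$, take an extension $\tilde u\in W^{s,p}(\R^n)$ with $\|\tilde u\|_{s,p}\le C\|u\|_{s,p;\Omega}$; then $\|u\|_{p_s^*;\Omega}\le\|\tilde u\|_{p_s^*;\R^n}\le C[\tilde u]_{s,p}\le C\|u\|_{s,p;\Omega}$, and since $\|u\|_{p;\Omega}\le\|u\|_{s,p;\Omega}$, the log-convexity of the $L^q$ norms — $\|u\|_{q;\Omega}\le\|u\|_{p;\Omega}^{1-\theta}\|u\|_{p_s^*;\Omega}^{\theta}$ with $1/q=(1-\theta)/p+\theta/p_s^*$ — gives the embedding for every $q\in[p,p_s^*]$. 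For the borderline case $sp=n$, given $q\in[p,\infty)$ pick $s'\in(0,s)$ with $s'p<n$ and $np/(n-s'p)\ge q$, which is possible because $np/(n-s'p)\to\infty$ as $s'\uparrow s$; the continuous inclusion $W^{s,p}(\R^n)\hookrightarrow W^{s',p}(\R^n)$ is elementary (split the Gagliardo integral over $\{|x-y|\le1\}$, where $|x-y|^{-n-s'p}\le|x-y|^{-n-sp}$, and over $\{|x-y|>1\}$, where the kernel is integrable against $|u(x)|^p+|u(y)|^p$), so applying the case already proved with the exponent $s'$, interpolating between $L^p$ and $L^{np/(n-s'p)}$, and pushing through the extension yields $W^{s,p}(\Omega)\hookrightarrow L^q(\Omega)$.

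The last item, $sp>n$, I would handle directly on $\R^n$ by a Campanato-type estimate; this is clean because now $s-n/p>0$. H\"older's inequality on $B_\rho(x)\times B_{2\rho}(x)$ together with the bound $|w-z|\le3\rho$ valid there gives
\[
|(u)_{B_\rho(x)}-(u)_{B_{2\rho}(x)}|\le C\rho^{s-n/p}\,[u]_{s,p;B_{2\rho}(x)}\le C\rho^{s-n/p}\,[u]_{s,p},
\]
and summing the geometric series over radii $\rho2^{-k}$ — convergent exactly because $s-n/p>0$ — gives $|u(x)-(u)_{B_\rho(x)}|\le C\rho^{s-n/p}[u]_{s,p}$ at every Lebesgue point $x$. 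Comparing the averages of $u$ over $B_{2|x-y|}(x)$ and $B_{2|x-y|}(y)$, both contained in $B_{5|x-y|}(x)$, then yields $|u(x)-u(y)|\le C|x-y|^{s-n/p}[u]_{s,p}$, so $u$ has a representative that is H\"older continuous of exponent $s-n/p$; moreover $\|u\|_{\infty}\le C\|u\|_{s,p}$, because $|u(x)|\le|u(x)-(u)_{B_1(x)}|+|(u)_{B_1(x)}|\le C[u]_{s,p}+C\|u\|_p$. Transferring from $\R^n$ to $\Omega$ via the extension operator gives $W^{s,p}(\Omega)\hookrightarrow C^{0,s-n/p}(\overline{\Omega})$, and hence $W^{s,p}(\Omega)\hookrightarrow C^{0,\alpha}(\overline{\Omega})$ for every $\alpha\in(0,s-n/p]$, since on $\overline{\Omega}$ a bounded function which is H\"older of exponent $s-n/p$ is automatically H\"older of every smaller exponent.

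The single genuine obstacle is the homogeneous fractional Sobolev inequality on $\R^n$ for a general exponent $p>1$ (for $p=2$ the Fourier side trivialises it); once that is in hand, the three embeddings are routine consequences of it, the extension property of $\Omega$, H\"older's inequality, and — in the borderline case — the elementary comparison of Gagliardo seminorms of different orders used above.
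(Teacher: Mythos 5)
The paper itself gives no proof of this theorem: it is stated as a known embedding result and attributed directly to \cite[Corollary~4.53 and Theorem~4.54]{DD} (see also \cite{Adams,DNPV}), so there is nothing in the paper to compare your argument against. Your sketch is nevertheless a correct outline of the standard proof and matches the route taken in those references. The reductions you use are all sound: the Hedberg-type maximal function bound for the homogeneous inequality $\|u\|_{p_s^*;\R^n}\le C[u]_{s,p}$ when $sp<n$ (this is indeed the one genuinely nontrivial point, and you correctly isolate it); log-convexity of the $L^q$ norms to fill in $q\in[p,p_s^*]$; the elementary comparison $[u]_{s',p}^p\le [u]_{s,p}^p+C\|u\|_p^p$ for $s'<s$ to handle the borderline $sp=n$ by lowering $s$; and the Campanato oscillation estimate $|(u)_{B_\rho}-(u)_{B_{2\rho}}|\le C\rho^{s-n/p}[u]_{s,p;B_{2\rho}}$ plus a telescoping sum for $sp>n$, which does give the H\"older modulus $s-n/p$ at Lebesgue points and an $L^\infty$ bound, after which the downgrade to any $\alpha\in(0,s-n/p]$ follows from boundedness exactly as you say. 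All the exponent bookkeeping ($\rho^{-2n}\cdot\rho^{(n+sp)/p}\cdot\rho^{2n/p'}=\rho^{s-n/p}$, $(u)_{B_r(x)}\to0$ since $|(u)_{B_r(x)}|\lesssim r^{-n/p}\|u\|_p$, convergence of the dyadic series because $s-n/p>0$) checks out. Since the paper merely cites the result, accepting your sketch is equivalent to accepting those standard references, and you have done so transparently.
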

	
\begin{thm}\label{thm:teoemb}
Let $\Omega \subset \R^n$ be a bounded extension domain for $W^{s,p}$. Then we have:
\begin{itemize}
  \item if $sp\le n$, the embedding of $W^{s,p}(\Omega)$ into $L^q(\Omega)$ is compact for every $q\in[1,p_s^*)$;
  \item if $sp>n$, the embedding of $W^{s,p}(\Omega)$ into $C^{0,\alpha}(\overline{\Omega})$ is compact for $\alpha\in(0,s-\tfrac{n}p)$.  
\end{itemize}
\end{thm}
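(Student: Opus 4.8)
The plan is to treat the two regimes separately. For $sp\le n$ I would reduce, via the extension property and a cutoff, to a fixed-support family and then run the Riesz--Kolmogorov compactness criterion in $L^p(\R^n)$. Let $(u_k)$ be bounded in $W^{s,p}(\Omega)$. Since $\Omega$ is an extension domain, choose $\tilde u_k\in W^{s,p}(\R^n)$ with $\|\tilde u_k\|_{s,p}\le C\|u_k\|_{s,p;\Omega}$, fix $\psi\in C^\infty_c(\R^n)$ with $\psi\equiv1$ on $\Omega$, and set $v_k:=\psi\tilde u_k$. These agree with $u_k$ on $\Omega$, are supported in a common ball $B$, and — writing $\psi(x)\tilde u_k(x)-\psi(y)\tilde u_k(y)=\psi(x)\bigl(\tilde u_k(x)-\tilde u_k(y)\bigr)+\tilde u_k(y)\bigl(\psi(x)-\psi(y)\bigr)$ and using $|\psi(x)-\psi(y)|\le\min\{2\|\psi\|_\infty,\|\nabla\psi\|_\infty|x-y|\}$ together with the convergence of $\int_{|z|\le1}|z|^{p-n-sp}\,\di z$ (which uses $s<1$) — satisfy $\sup_k\|v_k\|_{s,p}=:M<\infty$. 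It then suffices to extract a subsequence of $(v_k)$ that converges in $L^p(\R^n)$, since its restriction to $\Omega$ converges in $L^p(\Omega)$.

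The core step is a quantitative equicontinuity estimate for ball averages: for $\rho>0$ set $v_\rho(x):=|B(x,\rho)|^{-1}\int_{B(x,\rho)}v(y)\,\di y$; Jensen's inequality gives $\|v-v_\rho\|_p^p\le|B(0,\rho)|^{-1}\int_{\R^n}\int_{B(x,\rho)}|v(x)-v(y)|^p\,\di y\,\di x$, and since $|x-y|<\rho$ there one may insert $|x-y|^{n+sp}\le\rho^{n+sp}$ to obtain
\[
\|v-v_\rho\|_p^p\le\frac{\rho^{n+sp}}{|B(0,\rho)|}\,[v]_{s,p}^p=C\rho^{sp}\,[v]_{s,p}^p,
\]
so $\|v_k-(v_k)_\rho\|_p\le C\rho^s[v_k]_{s,p}\le C\rho^sM$ for all $k$. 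For each fixed $\rho$ the averaged family $\{(v_k)_\rho\}_k$ has support in a fixed bounded set, is bounded in $L^\infty$, and is equicontinuous (being the convolution of $v_k$ with a fixed $L^{p'}$ kernel, and translation is $L^{p'}$-continuous), so by Arzel\`a--Ascoli it is precompact in $C$, hence in $L^p(\R^n)$. Letting $\rho\to0$ and using a standard total-boundedness argument — for $\ve>0$ choose $\rho$ with $C\rho^sM<\ve/2$ and then a finite $\ve/2$-net for $\{(v_k)_\rho\}_k$ — shows $(v_k)$ is totally bounded, hence precompact, in $L^p(\R^n)$. This gives the compact embedding into $L^p(\Omega)$; the case $1\le q\le p$ follows from $\|\cdot\|_{q;\Omega}\le|\Omega|^{1/q-1/p}\|\cdot\|_{p;\Omega}$, and the case $p<q<p_s^*$ (respectively, any finite $q$ when $sp=n$) follows by taking a subsequence Cauchy in $L^p(\Omega)$, which is bounded in $L^r(\Omega)$ for $r=p_s^*$ (resp.\ any $r>q$) by Theorem~\ref{thm:teoembcont}, and interpolating $\|\cdot\|_{q;\Omega}\le\|\cdot\|_{p;\Omega}^\theta\|\cdot\|_{r;\Omega}^{1-\theta}$ to make it Cauchy in $L^q(\Omega)$.

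Finally, if $sp>n$, Theorem~\ref{thm:teoembcont} shows a bounded sequence in $W^{s,p}(\Omega)$ is bounded in $C^{0,\beta}(\overline\Omega)$ with $\beta=s-\tfrac np$, hence uniformly bounded and equicontinuous; Arzel\`a--Ascoli gives $u_k\to u$ uniformly (and $[u]_{C^{0,\beta}}\le M$), and splitting the $C^{0,\alpha}$-seminorm of $u_k-u$ at a scale $\delta$ gives $[u_k-u]_{C^{0,\alpha}}\le\max\{2\delta^{-\alpha}\|u_k-u\|_\infty,\,2M\delta^{\beta-\alpha}\}\to0$ for $\alpha<\beta$, on choosing $\delta$ first and then $k$; hence $u_k\to u$ in $C^{0,\alpha}(\overline\Omega)$. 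I expect the only genuinely delicate points to be the two that use $s<1$: the Leibniz-type bound $\|\psi u\|_{s,p}\le C\|u\|_{s,p}$ in the reduction and the $\rho^s$ estimate for $\|v-v_\rho\|_p$; the remaining ingredients (Arzel\`a--Ascoli, boundedness of $\Omega$, H\"older interpolation) are routine.
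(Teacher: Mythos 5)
Your argument is correct, but there is nothing in the paper to compare it against: the paper does not prove Theorem~\ref{thm:teoemb}, it simply cites it (Corollary~4.53 and Theorem~4.54 of \cite{DD}, see also \cite{Adams}). What you have written is a self-contained proof along the standard Fr\'echet--Kolmogorov lines, and it is sound. The two points you flag as delicate are indeed the ones worth spelling out, and both go through: the near-diagonal part of the cutoff estimate reduces to $\int_{|z|\le1}|z|^{p-n-sp}\,\di z<\infty$, which holds precisely because $s<1$ (the far part only needs $sp>0$ and $\tilde u\in L^p$); and the Jensen/kernel-insertion step gives $\|v-v_\rho\|_p^p\le\rho^{n+sp}|B(0,\rho)|^{-1}[v]_{s,p}^p=C\rho^{sp}[v]_{s,p}^p$, which is the quantitative modulus one needs. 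The remaining steps (Arzel\`a--Ascoli for the mollified family on a fixed compact support, total boundedness, H\"older interpolation up to $p_s^*$ using Theorem~\ref{thm:teoembcont}, and the two-scale seminorm split when $sp>n$) are routine and correctly executed. One very minor remark: in the $sp>n$ case you should say explicitly that Arzel\`a--Ascoli is being applied on the compact set $\overline\Omega$, which is where the boundedness of $\Omega$ is used; you clearly know this, but the word ``compact'' does not appear. Overall this is a complete and correct proof of a result the paper takes as a black box, and it follows essentially the same route as the reference being cited.
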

	
\begin{rem}\label{rem:hipenq}
let $\Omega\subset\mathbb{R}^n$ be a bounded extension domain. Observe that the embedding $W^{s,p}(\Omega)$ into $L^p(\Omega)$ is compact for all $p\in(1,\infty)$. Additionally, if $\max\{1,\tfrac{n}{sp}\}<q < \infty$ then
\[
  \begin{cases}
    pq^{\prime}<p_s^* &\text{ if } sp \le n, \\
	pq^{\prime}\le\infty &\text{ if } sp > n,
  \end{cases}
\]
where $p'$ is the conjugate exponent of $p$, $\tfrac{1}{p'} + \tfrac{1}{p} = 1$. Thus, by Theorem~\ref{thm:teoemb}, we have that the embedding of $W^{s,p}(\Omega)$ into $L^{pq'}(\Omega)$ is compact.
	\end{rem}

In order to work with weak solutions to \eqref{eq:autovalores} we need to find the weak formulation of the operator $(-\Delta_p)^s$ defined in \eqref{eq:splap}.

So first we need to extend the definition of $(-\Delta_p)^s$ to the space $W^{s,p}(\R^n)$ with values in the dual $(W^{s,p}(\R^n))' = W^{-s,p'}(\R^n)$.

This computation is rather direct and we include the details for the sake of completeness.

We begin with a preliminary lemma.
\begin{lem}\label{epsilon}
Given $\ve>0$ we define the approximating operators $(-\Delta_p)_\ve^s$ as
$$
(-\Delta_p)_\ve^s u(x):= \int_{\R^n\setminus B_\ve(x)} \frac{|u(x)-u(y)|^{p-2}(u(x)-u(y))}{|x-y|^{n+sp}}\, \di y.
$$
Then, this operator is well defined between $W^{s,p}(\R^n)$ and $L^{p'}(\R^n)$. Moreover, the following estimate holds,
$$
\|(-\Delta_p)_\ve^s u\|_{p'}\le C [u]_{s,p}^{\frac{p}{p'}},
$$
where $C>0$ depends on $\ve, p, n$ and $s$.
\end{lem}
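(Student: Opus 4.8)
The plan is to prove this by a direct estimate, splitting the defining integral into a region near the singularity and a region far from it, and using H\"older's inequality together with the Gagliardo seminorm.

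Fix $x \in \R^n$ and $\ve > 0$. First I would control the integrand pointwise: since $\big| |u(x)-u(y)|^{p-2}(u(x)-u(y)) \big| = |u(x)-u(y)|^{p-1}$, we have
\[
  \left| (-\Delta_p)_\ve^s u(x) \right| \le \int_{\R^n \setminus B_\ve(x)} \frac{|u(x)-u(y)|^{p-1}}{|x-y|^{n+sp}}\, \di y.
\]
Now I would apply H\"older's inequality with exponents $p$ and $p'$ to this last integral, writing the kernel as a product that distributes the weight $|x-y|^{-(n+sp)}$ correctly. The natural split is
\[
  \frac{|u(x)-u(y)|^{p-1}}{|x-y|^{n+sp}} = \frac{|u(x)-u(y)|^{p-1}}{|x-y|^{\frac{(n+sp)(p-1)}{p}}} \cdot \frac{1}{|x-y|^{\frac{n+sp}{p}}},
\]
so that raising the first factor to the power $p' = p/(p-1)$ gives exactly $|u(x)-u(y)|^p / |x-y|^{n+sp}$, which is the Gagliardo integrand, while raising the second factor to the power $p$ gives $|x-y|^{-(n+sp)}$, which is integrable over $\R^n \setminus B_\ve(x)$ precisely because $n+sp > n$. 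Carrying this out yields
\[
  \left| (-\Delta_p)_\ve^s u(x) \right| \le \left( \int_{\R^n \setminus B_\ve(x)} \frac{|u(x)-u(y)|^p}{|x-y|^{n+sp}}\, \di y \right)^{\frac1{p'}} \left( \int_{\R^n \setminus B_\ve(x)} \frac{\di y}{|x-y|^{n+sp}} \right)^{\frac1p}.
\]
The far integral is computed in polar coordinates: $\int_{\R^n \setminus B_\ve(x)} |x-y|^{-(n+sp)}\, \di y = \omega_{n-1} \int_\ve^\infty r^{-1-sp}\, \di r = \frac{\omega_{n-1}}{sp}\, \ve^{-sp}$, a finite constant depending only on $n, s, p$ and $\ve$. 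Denote by $C_0 = (\omega_{n-1}/(sp))^{1/p} \ve^{-s}$ this factor.

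To finish, I would raise the resulting pointwise bound to the power $p'$ and integrate in $x$ over $\R^n$:
\[
  \|(-\Delta_p)_\ve^s u\|_{p'}^{p'} \le C_0^{p'} \int_{\R^n} \int_{\R^n} \frac{|u(x)-u(y)|^p}{|x-y|^{n+sp}}\, \di y\, \di x = C_0^{p'}\, [u]_{s,p}^p,
\]
where I have enlarged the inner region $\R^n \setminus B_\ve(x)$ to all of $\R^n$ and used Tonelli's theorem to recognize the double integral as $[u]_{s,p}^p$. Taking $p'$-th roots gives $\|(-\Delta_p)_\ve^s u\|_{p'} \le C\, [u]_{s,p}^{p/p'}$ with $C = C_0 = (\omega_{n-1}/(sp))^{1/p}\, \ve^{-s}$, which has the asserted dependence on $\ve, p, n, s$. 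In particular, since $u \in W^{s,p}(\R^n)$ has $[u]_{s,p} < \infty$, the function $(-\Delta_p)_\ve^s u$ lies in $L^{p'}(\R^n)$, so the operator is well defined as claimed. The only mild subtlety — and the one point deserving a word of care rather than a genuine obstacle — is the measurability of $x \mapsto (-\Delta_p)_\ve^s u(x)$, which follows from Fubini--Tonelli applied to the nonnegative integrand $|u(x)-u(y)|^{p-1}|x-y|^{-(n+sp)}\mathbf{1}_{\{|x-y|>\ve\}}$ once the above estimate shows the integral is finite for a.e. $x$.
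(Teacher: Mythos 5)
Your argument is essentially identical to the paper's: the same pointwise bound, the same H\"older split with exponents $p$ and $p'$ on the kernel $|x-y|^{-(n+sp)}$, the same polar-coordinate computation of the tail integral, and the same integration in $x$ via Tonelli to recover $[u]_{s,p}^p$. The constants agree (the paper's $n\omega_n$, with $\omega_n$ the volume of the unit ball, equals your $\omega_{n-1}$, the surface area of the unit sphere); your closing remark on measurability is a minor addition the paper leaves implicit.
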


\begin{proof}
Take $u \in W^{s,p}(\R^n)$ and $\ve>0$. We have,
\begin{align*}
|(-\Delta_p)^s_\ve u(x)| & \le \int_{|x-y|>\ve} \frac{|u(x)-u(y)|^{p-1}}{|x-y|^{n+sp}}\, \di y\\
&\le \left(\int_{|x-y|>\ve} \frac{|u(x)-u(y)|^{p}}{|x-y|^{n+sp}}\, \di y\right)^{\frac{1}{p'}} \left(\int_{|x-y|>\ve} \frac{1}{|x-y|^{n+sp}}\, \di y\right)^{\frac{1}{p}},
\end{align*}
and so
$$
\|(-\Delta_p)^s_\ve u\|_{p'} \le C_{\ve, p, n, s} [u]_{s,p}^\frac{p}{p'},
$$
where 
$$
C_{\ve, p, n, s} = \left(\frac{n\omega_n}{sp}\right)^\frac{1}{p} \ve^{-s},
$$
as we wanted to show.
\end{proof}

In order to properly define the operator $(-\Delta_p)^s$, we use the canonical inclusion of $L^{p'}(\R^n)$ into $W^{-s,p'}(\R^n)$ given by
$$
\langle f, v\rangle := \int_{\R^n} fv\, dx
$$
for every $v\in W^{s,p}(\R^n)$ and therefore, we may define
$$
(-\Delta_p)^s u = \lim_{\ve\to 0} (-\Delta_p)_\ve^s u
$$
in $W^{-s,p'}(\R^n)$.

\begin{prop}\label{weak.form}
For every $u\in W^{s,p}(\R^n)$, there exists the limit $\lim_{\ve\to 0} (-\Delta_p)_\ve^s u$ in $W^{-s,p'}(\R^n)$.
Moreover, it holds
\begin{align*}
\langle (-\Delta_p)^s u, v\rangle &= \lim_{\ve\to 0}\langle (-\Delta_p)_\ve^s u, v\rangle\\
&= \frac12 \iint_{\R^n\times \R^n}\dfrac{|u(x)-u(y)|^{p-2}(u(x)-u(y))(v(x)-v(y))}{|x-y|^{n+sp}} \, \di x \di y.
\end{align*}
\end{prop}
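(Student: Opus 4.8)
The plan is to compute the pairing $\langle (-\Delta_p)_\ve^s u,v\rangle$ against a fixed $v\in W^{s,p}(\R^n)$ by Fubini's theorem together with a symmetrization trick, then to let $\ve\to0$ using dominated convergence in order to identify the limiting functional for each $v$, and finally to upgrade this pointwise (in $v$) convergence to convergence in the norm of $W^{-s,p'}(\R^n)$.

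First I would fix $\ve>0$ and write $\langle (-\Delta_p)_\ve^s u,v\rangle=\int_{\R^n}(-\Delta_p)_\ve^s u(x)\,v(x)\,\di x$, which makes sense since $(-\Delta_p)_\ve^s u\in L^{p'}(\R^n)$ by Lemma~\ref{epsilon} and $v\in L^p(\R^n)$. To rewrite it as a double integral I need absolute integrability: bounding $|u(x)-u(y)|^{p-1}$ by a constant multiple of $|u(x)|^{p-1}+|u(y)|^{p-1}$ and using that the truncated kernel $z\mapsto|z|^{-(n+sp)}$ restricted to $\{|z|>\ve\}$ lies in $L^1(\R^n)$, together with $u,v\in L^p(\R^n)$, one obtains $\iint_{|x-y|>\ve}|u(x)-u(y)|^{p-1}|v(x)|\,|x-y|^{-(n+sp)}\,\di x\,\di y<\infty$. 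Then Fubini applies, and interchanging the names of $x$ and $y$ — the region $\{|x-y|>\ve\}$ and the kernel being symmetric while $u(x)-u(y)$ changes sign — and averaging the two representations yields
\[
\langle (-\Delta_p)_\ve^s u,v\rangle=\frac12\iint_{|x-y|>\ve}\frac{|u(x)-u(y)|^{p-2}(u(x)-u(y))(v(x)-v(y))}{|x-y|^{n+sp}}\,\di x\,\di y .
\]

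The next ingredient is the factorization
\[
\frac{|u(x)-u(y)|^{p-1}|v(x)-v(y)|}{|x-y|^{n+sp}}=\left(\frac{|u(x)-u(y)|}{|x-y|^{\frac{n}{p}+s}}\right)^{p-1}\frac{|v(x)-v(y)|}{|x-y|^{\frac{n}{p}+s}},
\]
to which I would apply Hölder's inequality with exponents $p'$ and $p$ to get $\iint_{\R^n\times\R^n}|u(x)-u(y)|^{p-1}|v(x)-v(y)|\,|x-y|^{-(n+sp)}\,\di x\,\di y\le[u]_{s,p}^{p-1}[v]_{s,p}<\infty$. Thus the integrand above is absolutely integrable over all of $\R^n\times\R^n$, and since $\{|x-y|>\ve\}$ increases to $(\R^n\times\R^n)\setminus\{x=y\}$ as $\ve\to0$, dominated convergence gives, for each $v\in W^{s,p}(\R^n)$,
\[
\lim_{\ve\to0}\langle (-\Delta_p)_\ve^s u,v\rangle=\frac12\iint_{\R^n\times\R^n}\frac{|u(x)-u(y)|^{p-2}(u(x)-u(y))(v(x)-v(y))}{|x-y|^{n+sp}}\,\di x\,\di y ,
\]
and the same Hölder bound shows this limiting functional is bounded on $W^{s,p}(\R^n)$, with norm $\le\tfrac12[u]_{s,p}^{p-1}$, so it defines an element of $W^{-s,p'}(\R^n)$.

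Finally, to obtain convergence in the norm of $W^{-s,p'}(\R^n)$ I would check that $\{(-\Delta_p)_\ve^s u\}_{\ve>0}$ is Cauchy there: for $0<\ve'<\ve$, the computation of the second paragraph applied to each operator expresses $\langle(-\Delta_p)_{\ve'}^s u-(-\Delta_p)_\ve^s u,v\rangle$ as $\tfrac12$ times the double integral above over the annulus $\{\ve'<|x-y|\le\ve\}$, and the Hölder estimate restricted to that annulus gives
\[
\|(-\Delta_p)_{\ve'}^s u-(-\Delta_p)_\ve^s u\|_{W^{-s,p'}(\R^n)}\le\frac12\left(\iint_{\ve'<|x-y|\le\ve}\frac{|u(x)-u(y)|^p}{|x-y|^{n+sp}}\,\di x\,\di y\right)^{\frac{1}{p'}},
\]
which tends to $0$ as $\ve,\ve'\to0$ by absolute continuity of the integral, since $[u]_{s,p}<\infty$. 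As $W^{-s,p'}(\R^n)$ is a Banach space the sequence converges there, and by the previous paragraph its limit is the asserted bilinear form. I expect the main obstacle to be the symmetrization step: one must be careful to justify Fubini's theorem before interchanging the order of integration (this is precisely where the truncation at scale $\ve$ is genuinely used, via the $L^1$ bound on the kernel); everything afterward is a routine application of Hölder's inequality and the dominated and monotone convergence theorems.
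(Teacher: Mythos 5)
Your proof is correct and takes essentially the same approach as the paper: symmetrize the single integral into the half-sum double integral over $\{|x-y|>\ve\}$ via Fubini, then pass to the limit. The paper states the symmetrization and then simply says the result follows by letting $\ve\downarrow 0$; you usefully fill in what that last step actually requires — the Hölder factorization giving $[u]_{s,p}^{p-1}[v]_{s,p}$ as a uniform bound, and the Cauchy estimate in $W^{-s,p'}(\R^n)$ over annuli $\{\ve'<|x-y|\le\ve\}$ — so your write-up is a more careful version of the same argument rather than a different one.
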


\begin{proof}
Let $v\in W^{s,p}(\R^n)$. Hence,
\begin{align*}
\langle (-\Delta_p)^s_\ve u, v\rangle &= \int_{\R^n} (-\Delta_p)^s_\ve u(x) v(x)\, \di x\\
&= \int_{\R^n}\int_{|x-y|>\ve} \frac{|u(x)-u(y)|^{p-2}(u(x)-u(y)) v(x)}{|x-y|^{n+sp}}\, \di x\di y.
\end{align*}
Analogously,
$$
\langle (-\Delta_p)^s_\ve u, v\rangle = -\int_{\R^n}\int_{|x-y|>\ve} \frac{|u(x)-u(y)|^{p-2}(u(x)-u(y)) v(y)}{|x-y|^{n+sp}}\, \di x\di y.
$$
Therefore
$$
\langle (-\Delta_p)^s_\ve u, v\rangle = \frac12 \int_{\R^n}\int_{|x-y|>\ve} \dfrac{|u(x)-u(y)|^{p-2}(u(x)-u(y))(v(x)-v(y))}{|x-y|^{n+sp}} \, \di x \di y.
$$
From this last equality, the result follows passing to the limit $\ve\downarrow 0$.
\end{proof}

%%%%%%%%%%%%%%%%%%%%%%%%%%%%%%%%%%%%%%%%%%%%%%%%%%%%%%%%%%%%%%%%%%%%%%%%%%%

\subsection{A minimum principle} 
Let $\Omega$ be bounded extension domain for $W^{s,p}$, and $V \in L^q(\Omega)$ with $q \in (1,\infty) \cap (\tfrac{n}{sp},\infty)$. We say that $u \in \widetilde{W}^{s,p}(\Omega)$ is a weak super-solution to
\begin{equation}\label{eq:supersol}
\left\lbrace 
  \begin{aligned}
    (-\Delta_p)^s u + V(x) |u|^{p-2}u &= 0  && \text{ in } \Omega,\\
			                        u &= 0  && \text{ in } \R^n \setminus \Omega,
  \end{aligned}
\right. 
\end{equation}
if
\begin{equation}\label{eq:wsupersol}
  \h (u,v) + \int_{\Omega} V(x) |u|^{p-2}u v \, \di x \ge 0 \quad
  \forall v \in \widetilde{W}^{s,p}(\Omega), v \ge 0,
\end{equation}
where $\h \colon W^{s,p}(\R^n)\times
	W^{s,p}(\R^n) \to \R$ is defined as
\[
  \h (u,v) = \frac{1}{2} \iint_{\R^n\times \R^n} \dfrac{|u(x)-u(y)|^{p-2}(u(x)-u(y))(v(x)-v(y))}{|x-y|^{n+sp}} \, \di x \di y.
\]
Observe that by virtue of Proposition \ref{weak.form} this is equivalent to say that $u\in \widetilde{W}^{s,p}(\Omega)$ is a distributional super-solution to \eqref{eq:supersol}.

Notice that $u,v \in \widetilde{W}^{s,p}(\Omega)$ are defined in the whole space,
since we consider them to be extended by zero outside of $\Omega$, see Remark~\ref{zero-extension}. With this convention in mind, observe that 
\[
  \h (u,u) = \frac{1}{2} [u]_{s,p}^p  \quad \text{for all } u \in \widetilde{W}^{s,p}(\Omega).
\]
	
Let us now prove a minimum principle for weak super-solutions of \eqref{eq:supersol}. To this end, we follow the ideas in \cite{BF} and prove first the next logarithmic lemma (see \cite[Lemma 1.3]{di2014local}). Although this is not the more general version of the logarithmic lemma (c.f. with \cite[Lemma 1.3]{di2014local}) it will suffices our purposes and simplifies the presentation.
	
\begin{lem}\label{lema:DKP}
Let $\Omega$ be bounded extension domain for $W^{s,p}$, and $V \in L^q(\Omega)$ with $q \in (1,\infty) \cap (\tfrac{n}{sp},\infty)$. Suppose that $u$ is a nonnegative weak super-solution of  \eqref{eq:supersol}. Then for any $B_r=B_r(x_0)$ such that $B_{2r}\subset \Omega$ and $0<\delta<1$
\begin{align*}
\iint_{B_r\times B_r}& \dfrac{1}{|x-y|^{n+sp}} \left| \log \left(\dfrac{u(x)+\delta}{u(y)+\delta}\right)\right|^p \, \di x \di y \le Cr^{n-sp} + \|V\|_{1;B_{2r}},
\end{align*}
where $C$ depends only on $n,s,$ and $p$.
\end{lem}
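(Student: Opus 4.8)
The plan is to test the weak super-solution inequality \eqref{eq:wsupersol} with an admissible test function built from $u$ and the localizing cut-off, then extract the logarithmic quantity on the left-hand side by exploiting the algebraic structure of the fractional $p$-Laplacian. Let $\varphi \in C_c^\infty(B_{3r/2})$ be a standard cut-off with $0 \le \varphi \le 1$, $\varphi \equiv 1$ on $B_r$, and $|\nabla \varphi| \le C/r$. For $\delta \in (0,1)$ set $\bar u = u + \delta$ and use as test function
\[
v = \frac{\varphi^p}{\bar u^{p-1}}.
\]
One first checks $v \in \widetilde W^{s,p}(\Omega)$ and $v \ge 0$: since $\bar u \ge \delta > 0$ on the support of $\varphi$ and $u$ vanishes outside $\Omega$ while $\varphi$ vanishes outside $B_{3r/2} \subset B_{2r} \subset \Omega$, this is routine. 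Plugging $v$ into \eqref{eq:wsupersol} gives
\[
\h(u,v) \ge -\int_\Omega V|u|^{p-2}u\, v\, \di x \ge -\|V\|_{1;B_{2r}},
\]
where the last bound uses $0 \le |u|^{p-2}u\, v = u^{p-1}\varphi^p/\bar u^{p-1} \le \varphi^p \le 1$ on $B_{3r/2}$.

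Next I would analyze $\h(u,v)$. Writing out the double integral with $v(x)-v(y) = \varphi(x)^p \bar u(x)^{1-p} - \varphi(y)^p \bar u(y)^{1-p}$, the goal is the pointwise inequality (for $a,b>0$, and cut-off values $\varphi_x,\varphi_y \in [0,1]$)
\[
|a-b|^{p-2}(a-b)\left(\frac{\varphi_x^p}{a^{p-1}} - \frac{\varphi_y^p}{b^{p-1}}\right) \ge c_p\, \varphi_x^p \wedge \varphi_y^p\, \left|\log\frac{a}{b}\right|^p - C_p\, |\varphi_x - \varphi_y|^p,
\]
where $a = \bar u(x)$, $b = \bar u(y)$. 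This is precisely the algebraic heart of the logarithmic lemma as in \cite[Lemma 1.3]{di2014local}; I would invoke that elementary inequality directly (it is proved by distinguishing the cases $a \ge b$ and $a < b$ and using convexity/Young's inequality on the cross terms). Integrating this over $B_{3r/2}\times B_{3r/2}$ against the kernel $|x-y|^{-n-sp}$, restricting the left-hand "good" term to $B_r \times B_r$ where $\varphi \equiv 1$, and rearranging yields
\[
c_p \iint_{B_r\times B_r} \frac{1}{|x-y|^{n+sp}}\left|\log\frac{\bar u(x)}{\bar u(y)}\right|^p \di x\,\di y \le 2\h(u,v) + C_p \iint_{B_{3r/2}\times B_{3r/2}} \frac{|\varphi(x)-\varphi(y)|^p}{|x-y|^{n+sp}}\,\di x\,\di y.
\]

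It remains to bound the cut-off seminorm term and to assemble the estimate. Using $|\varphi(x)-\varphi(y)| \le \min\{1, C|x-y|/r\}$, a direct computation (split into $|x-y| < r$ and $|x-y| \ge r$, use polar coordinates) gives $[\varphi]_{s,p;\R^n}^p \le C r^{n-sp}$ with $C = C(n,s,p)$; this is the standard cut-off estimate and I would carry it out in one line. Combining the three displays and absorbing constants produces exactly
\[
\iint_{B_r\times B_r} \frac{1}{|x-y|^{n+sp}}\left|\log\frac{u(x)+\delta}{u(y)+\delta}\right|^p \di x\,\di y \le C r^{n-sp} + \|V\|_{1;B_{2r}},
\]
with $C = C(n,s,p)$, noting that the constant $2/c_p$ can be absorbed by taking $C$ larger (the coefficient on $\|V\|_{1;B_{2r}}$ should also be allowed to be a constant, or one rescales; I would keep the cleaner form as stated). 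The main obstacle is the pointwise algebraic inequality above: verifying it with the correct power $p$ and a clean separation of the logarithmic term from the cut-off error requires the careful case analysis of \cite[Lemma 1.3]{di2014local}, and everything else — choosing $v$, checking admissibility, the potential bound, the cut-off seminorm — is routine. A minor technical point to watch is that $v$ is built on $B_{3r/2}$ rather than $B_{2r}$, so the $\|V\|_1$ integral is over $B_{3r/2} \subset B_{2r}$, which is why the stated bound with $\|V\|_{1;B_{2r}}$ holds a fortiori.
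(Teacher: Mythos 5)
Your plan follows the same route as the paper: test the super-solution inequality with $v = \varphi^p/(u+\delta)^{p-1}$, invoke the algebraic content of \cite[Lemma 1.3]{di2014local} to extract the logarithmic term, and bound the cut-off seminorm by $Cr^{n-sp}$. However, the pointwise inequality you propose as the ``algebraic heart'' has the wrong sign, and this wrecks the assembly step. Take $\varphi_x = \varphi_y = 1$ and $a \neq b$: then the left-hand side
\[
  |a-b|^{p-2}(a-b)\left(\frac{1}{a^{p-1}} - \frac{1}{b^{p-1}}\right)
\]
is strictly \emph{negative} (e.g.\ $a>b$ forces $a^{1-p}<b^{1-p}$), while the right-hand side $c_p|\log(a/b)|^p$ is strictly positive, so the claimed ``$\ge$'' is false. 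More structurally, once you integrate your inequality you obtain
\[
  c_p\iint_{B_r\times B_r}\frac{|\log(\bar u(x)/\bar u(y))|^p}{|x-y|^{n+sp}}\,\di x\,\di y
  \le 2\h(u,v) + C_p[\varphi]_{s,p}^p,
\]
so you would need an \emph{upper} bound on $\h(u,v)$ — but the super-solution inequality only hands you the \emph{lower} bound $\h(u,v)\ge -\|V\|_{1;B_{2r}}$. The two displays are not combinable as written.

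The fix is to reverse the inequality (equivalently, put the minus sign on the left): what holds, and what the paper tacitly uses by quoting \cite{di2014local}, is
\[
  |a-b|^{p-2}(a-b)\left(\frac{\varphi_x^p}{a^{p-1}} - \frac{\varphi_y^p}{b^{p-1}}\right)
  \le -\,c_p\min\{\varphi_x,\varphi_y\}^p\left|\log\frac{a}{b}\right|^p + C_p|\varphi_x-\varphi_y|^p .
\]
Integrating this against the kernel gives
\[
  \h(u,v)\le -\frac{c_p}{2}\iint_{B_r\times B_r}\frac{|\log(\bar u(x)/\bar u(y))|^p}{|x-y|^{n+sp}}\,\di x\,\di y + C[\varphi]_{s,p}^p,
\]
and now the lower bound $\h(u,v)\ge -\|V\|_{1;B_{2r}}$ from the super-solution inequality, together with $[\varphi]_{s,p}^p\le Cr^{n-sp}$, yields exactly the stated estimate. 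Everything else in your write-up (choice of test function, admissibility, the bound $0\le u^{p-1}\varphi^p/\bar u^{p-1}\le 1$, the cut-off seminorm computation) is correct and matches the paper's proof; only this sign needs to be repaired.
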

	
\begin{proof}
Let $\delta>0$ and $\phi\in C_0^\infty( B_{\tfrac{3r}2})$ be such that 	
\[
  0\le \phi \le 1, \quad \phi\equiv1 \text{ in } B_r \quad \text{ and } \quad|D\phi|<Cr^{-1} \text{ in } B_{\tfrac{3r}2}\subset B_{2r}. 
\]
Taking $v=(u+\delta)^{1-p}\phi^p$ as test function in \eqref{eq:wsupersol} we have that
\begin{equation}\label{eq:DKP}
  \begin{aligned}
    -\int_{B_{\tfrac{3r}2}} V(x) \frac{u^{p-1}}{(u+\delta)^{p-1}} \phi^p\, \di x \le \h (u,(u+\delta)^{1-p}\phi^p). 
  \end{aligned}
\end{equation}
In the proof of Lemma 1.3 in \cite{di2014local}, it is showed that
\begin{align*}
  \h (u,(u+\delta)^{1-p}\phi^p) \le &\ Cr^{n-sp} - \iint_{B_r\times B_r} \dfrac{1}{|x-y|^{n+sp}} \left| \log\left(\dfrac{u(x)+\delta}{u(y)+\delta}\right)\right|^p \, \di x \di y,
\end{align*}
where $C$ depends only on $n,s,$ and $p$. 

Then, by \eqref{eq:DKP} and using that $0\le u^{p-1}(u+\delta)^{1-p}\phi^p\le1$ in $B_{\tfrac{3r}{2}},$ the lemma holds.
\end{proof}
	
Proceeding as in the proof of Theorem~A.1 in \cite{BF} and using the previous lemma, we get the following minimum principle.

\begin{thm}\label{thm:mprinciple} 
Under the hypotheses of the previous lemma, if $u$ is a nonnegative weak super-solution of \eqref{eq:supersol} and $u\not\equiv0$ in $\Omega$, then $u>0$ a.e in $\Omega$. 
\end{thm}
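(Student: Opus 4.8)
The plan is to split the argument into a local ``all-or-nothing'' dichotomy, extracted from Lemma~\ref{lema:DKP}, followed by a gluing step. For the local part I would fix a ball $B_r=B_r(x_0)$ with $B_{2r}\subset\Omega$, put $g_\delta:=\log(u+\delta)$ for $0<\delta<1$, and note the pointwise bound $|\log(u+\delta)|\le|\log\delta|+\log 2+u$, so that $g_\delta\in L^p(B_r)$. Lemma~\ref{lema:DKP} reads $[g_\delta]_{s,p;B_r}^p\le Cr^{n-sp}+\|V\|_{1;B_{2r}}$, and the crucial point is that this bound is \emph{independent of $\delta$} (and finite, since $\|V\|_{1;B_{2r}}\le|B_{2r}|^{1/q'}\|V\|_q$). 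Feeding it into the fractional Poincaré inequality on the ball, $\int_{B_r}|g_\delta-(g_\delta)_{B_r}|^p\,\di x\le C(n,p,s)\,r^{sp}[g_\delta]_{s,p;B_r}^p$, produces a finite constant $M_r$, independent of $\delta$, such that
\[
  \int_{B_r}\bigl|g_\delta-(g_\delta)_{B_r}\bigr|^p\,\di x\le M_r\qquad\text{for all }0<\delta<1.
\]

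Next I would establish the dichotomy by contradiction. Assume both $\mu_0:=|\{u=0\}\cap B_r|>0$ and $|\{u>0\}\cap B_r|>0$, and choose $k>1$ so that $\mu_1:=|\{u\ge 1/k\}\cap B_r|>0$. On the first set $g_\delta\equiv\log\delta$, on the second $g_\delta\ge-\log k$; hence $g_\delta$ has two plateaus separated by a gap of size at least $-\log\delta-\log k$, so no matter what $(g_\delta)_{B_r}$ equals, $|g_\delta-(g_\delta)_{B_r}|\ge\tfrac12(-\log\delta-\log k)$ on one of these sets, of measure $\ge\min\{\mu_0,\mu_1\}$. Letting $\delta\downarrow0$ makes the left-hand side above blow up, contradicting $M_r<\infty$. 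Consequently, for every ball $B_r$ with $B_{2r}\subset\Omega$ one has either $u=0$ a.e.\ in $B_r$ or $u>0$ a.e.\ in $B_r$.

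For the gluing step I would introduce the open sets $G:=\{x_0\in\Omega: u>0\text{ a.e.\ near }x_0\}$ and $N:=\{x_0\in\Omega: u=0\text{ a.e.\ near }x_0\}$. By the dichotomy $\Omega=G\cup N$, and $G\cap N=\emptyset$ because a ball has positive measure; so it suffices to rule out $N\neq\emptyset$. Assuming $N\neq\emptyset$, I would test \eqref{eq:wsupersol} with a nonnegative $v\in C_c^\infty(N)$, $v\not\equiv0$: as $u=0$ a.e.\ on $\supp v\subset N$, the potential term disappears and $\h(u,v)$ reduces to
\[
  \h(u,v)=-\iint_{\R^n\times\R^n}\frac{u(y)^{p-1}v(x)}{|x-y|^{n+sp}}\,\di x\,\di y\le 0,
\]
so this integral must vanish, forcing $u\equiv0$ in $\R^n$ and contradicting $u\not\equiv0$. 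Hence $N=\emptyset$, i.e.\ $\Omega=G$, which is exactly $u>0$ a.e.\ in $\Omega$. (If one prefers to assume $\Omega$ connected, the conclusion follows at once from $\Omega=G\sqcup N$ with $G,N$ open and $u\not\equiv0$, bypassing the use of the nonlocal term.)

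The pointwise estimate on $g_\delta$ and the fractional Poincaré inequality on a ball are standard. The points that require care are arranging the right-hand side of Lemma~\ref{lema:DKP} to be genuinely uniform in $\delta$ — so that the plateau argument bites — and upgrading the ball-wise alternative to a statement on all of $\Omega$; it is at this last step that the nonlocal interaction in $\h(u,\cdot)$ (or connectedness of $\Omega$) is essential.
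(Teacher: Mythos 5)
Your argument is correct and follows essentially the same two-stage plan as the paper's proof: extract a $\delta$-uniform logarithmic estimate from Lemma~\ref{lema:DKP}, send $\delta\downarrow 0$ to produce a local all-or-nothing alternative, and then use the nonlocal interaction $\h(u,v)$ with $v$ supported in the zero set to preclude $u$ from vanishing on a nontrivial open region. The only differences are cosmetic: the paper runs the local step with $F_\delta=\log(1+u/\delta)$ and bounds $\int_{B_r}|F_\delta|^p$ directly by averaging the kernel over $Z\cap B_r$ (a hands-on Poincar\'e argument anchored at the zero set), whereas you invoke a ready-made fractional Poincar\'e inequality for $g_\delta=\log(u+\delta)$ and derive the contradiction from the ``two plateaus'' picture; and the paper organizes the gluing by connected components while you package it, somewhat more cleanly, as the open dichotomy $\Omega=G\sqcup N$ before applying the same sign computation for $\h(u,v)$.
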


\begin{proof}
Assume first that $u\not\equiv0$ in all connected components of $\Omega$.

		We argue by contradiction and we assume that 
		$Z=\{x\in \Omega\colon u(x)=0\}$ has positive measure. Since 
		$u\not\equiv0$ in all connected 
		components of $\Omega,$ there are a ball
		$B_R=B_R(x_0)\subset\Omega$ and 
		$r\in(0,\nicefrac{R}2)$ such that $|B_r\cap Z|>0$ 
		and $u\not\equiv0$ in $B_r.$ 
		
		For any $\delta>0$ and $x\in\R^n,$ we define 
		\[
			F_{\delta}(x)\coloneqq\log\left(1+\dfrac{u(x)}{\delta}
			\right).
		\] 
		Observe that, if $y\in B_r\cap Z $ then
		\[
			|F_\delta(x)|^p=|F_\delta(x)-F_\delta(y)|^p
			\le \dfrac{(2r)^{n+sp}}{|x-y|^{n+sp}} \left|
			\log\left(\dfrac{u(x)+\delta}{u(y)+\delta}\right)\right|^p
			\quad\forall x\in B_r.
		\]
		Then
		\[
			|F_{\delta}(x)|^p\le\dfrac{(2r)^{n+sp}}{|Z\cap B_r|}
			\int_{B_r}\dfrac{1}{|x-y|^{n+sp}} \left|
			\log\left(\dfrac{u(x)+\delta}{u(y)+\delta}\right)\right|^p
			\di y\quad\forall x\in B_r.
		\]
		Therefore
		\[
			\int_{B_r}
			|F_{\delta}(x)|^p dx\le\dfrac{(2r)^{n+sp}}{|Z\cap B_r|}
			\iint_{B_r\times B_r}\dfrac{1}{|x-y|^{n+sp}} \left|
			\log\left(\dfrac{u(x)+\delta}{u(y)+\delta}\right)\right|^p
			\di x\di y
		\]
		By, Lemma \ref{lema:DKP}, there is a constant $C$ independent
		of $\delta$ such that 
		\[
			\int_{B_r}
				|F_{\delta}(x)|^p dx\le
				C\dfrac{r^{n}(r^n+r^{sp}\|V\|_{1;B_{2r}})}{|Z\cap B_r|}
		\]
		Taking $\delta \to 0$  in the above inequality, we obtain
		\[
			u\equiv0 \mbox{ in } B_r
		\]
		which is a contradiction since $u\not\equiv0$ in $B_r.$
		Thus $u>0$ in $\Omega.$
		
Now, suppose that there is $Z$ a connected components of $\Omega$ such that $u\equiv0$ in $Z.$ Taking $\phi\in C_c^{\infty}(Z)$, $\phi>0$, as a test function we get
	\[
		\h(u,\phi)\ge 0.
	\]
Now, observe that in this case,
$$
\h(u,\phi) = -2\int_{\Omega\setminus Z} (u(x))^{p-1} \left(\int_Z \frac{\phi(y)}{|x-y|^{n+sp}}\, \di y\right)\, \di x.
$$
	Therefore
	\[
		\int_{\Omega\setminus Z}(u(x))^{p-1}
		\int_{Z}\dfrac{\phi(y)}{|x-y|^{n+sp}}\di y\di x\le 0
		\quad\forall \phi\in C_c^{\infty}(Z),\ \phi>0.
	\]
	Then $u=0$ in $\Omega,$ which is a contradiction.
\end{proof}

%%%%%%%%%%%%%%%%%%%%%%%%%%%%%%%%%%%%%%%%%%%%%%%%%%%%%%%%%%%%%%%%%%%%%%%%%%%%%%%%%

\section{The first eigenvalue}\label{elprimeraut}

Throughout this section, $\Omega \subset \R^n$ shall be a 
bounded extension domain boundary and $V \in L^q(\Omega)$, $q \in (1,\infty) \cap 
(\tfrac{n}{sp},\infty)$. We say that a function $u\in \widetilde{W}^{s,p}
(\Omega)$ is a weak solution of \eqref{eq:autovalores} if 
\begin{equation}\label{eq:plantdebil}
	\h (u,v) + \int_{\Omega} V(x)|u|^{p-2}uv\, \di x = \lambda 
	\int_{\R^n}|u|^{p-2}u v\, \di x	
\end{equation}
for all $v\in \widetilde{W}^{s,p}(\Omega)$. In this context, we say that $
\lambda\in\mathbb{R}$ is an {\em eigenvalue} provided there exists a 
nontrivial weak solution $u\in\widetilde{W}^{s,p}(\Omega)$ of \eqref
{eq:autovalores}. The function $u$ is a corresponding eigenfunction.

For a study of this first eigenvalue and its associated eigenfunction in the case $V=0$ we refer to \cite{BLP}.

Now, by Theorem~\ref{thm:mprinciple}, we have 
that
 
\begin{lem}\label{lema:signo-constante}
	If $u$ is a nonnegative 
		eigenfunction associated to 
		$\lambda$ then $u>0$ a.e. in $\Omega.$
\end{lem}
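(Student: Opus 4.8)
The plan is to deduce Lemma~\ref{lema:signo-constante} as an immediate consequence of the minimum principle Theorem~\ref{thm:mprinciple}. The idea is that a nonnegative eigenfunction is, in particular, a nonnegative weak super-solution of an equation of the form \eqref{eq:supersol} with a suitably modified potential, and then the strict positivity follows directly.

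More precisely, suppose $u \in \widetilde{W}^{s,p}(\Omega)$ is a nonnegative eigenfunction associated to $\lambda$, so that \eqref{eq:plantdebil} holds. Rearranging, for every $v \in \widetilde{W}^{s,p}(\Omega)$ we have
\[
  \h(u,v) + \int_\Omega (V(x) - \lambda)|u|^{p-2}uv\, \di x = 0.
\]
Since $u$ is nontrivial, it is not identically zero in $\Omega$. Set $\widetilde V := V - \lambda$. Because $\Omega$ is bounded and $V \in L^q(\Omega)$ with $q \in (1,\infty)\cap(\tfrac{n}{sp},\infty)$, the constant $\lambda$ also lies in $L^q(\Omega)$, hence $\widetilde V \in L^q(\Omega)$ with the same range of exponents. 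Thus $u$ is a (in fact, an exact) nonnegative weak solution, and a fortiori a nonnegative weak super-solution, of
\[
  \left\lbrace
  \begin{aligned}
    (-\Delta_p)^s u + \widetilde V(x)|u|^{p-2}u &= 0 && \text{in } \Omega,\\
    u &= 0 && \text{in } \R^n\setminus\Omega,
  \end{aligned}
  \right.
\]
that is, inequality \eqref{eq:wsupersol} holds with $V$ replaced by $\widetilde V$ (indeed with equality). Since $u \not\equiv 0$ in $\Omega$, Theorem~\ref{thm:mprinciple} applies and yields $u > 0$ a.e.\ in $\Omega$.

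I do not foresee a genuine obstacle here: the only points requiring a word of justification are that $\widetilde V = V - \lambda$ still satisfies the integrability hypothesis of Theorem~\ref{thm:mprinciple} (which uses boundedness of $\Omega$), and that a weak solution is in particular a weak super-solution (restrict the test function class in \eqref{eq:plantdebil} to $v \ge 0$ and note the equality trivially implies the inequality). Everything else is a direct citation of the minimum principle. The statement is therefore essentially a corollary, and the proof should be only a few lines.
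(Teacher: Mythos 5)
Your proof is correct and is exactly what the paper has in mind: the paper simply states that the lemma follows ``by Theorem~\ref{thm:mprinciple}'' without spelling out the details, and your absorption of $-\lambda$ into the potential (noting that $\widetilde V = V - \lambda \in L^q(\Omega)$ since $\Omega$ is bounded) together with the observation that a weak solution is a fortiori a weak super-solution is precisely the omitted routine verification.
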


 Now, our goal is to prove that the lowest (first) eigenvalue of \eqref{eq:autovalores} is
\begin{equation} \label{first-eigenvalue}
	\lambda(V)=\inf \left\{\frac{1}{2} [u]_{s,p}^p + 
	\int_{\Omega} V(x)|u|^p\, \di x \colon u 
	\in\widetilde{W}^{s,p}(\Omega) \text{ and } \|u\|_p = 1\right\}.
\end{equation}
The next lemma implies that $\lambda(V)$ is well defined.
	
\begin{lem}\label{lema:aux1}
  Let $\Omega \subset \R^n$ be a bounded extension domain. Then, given $\varepsilon>0$, there is a constant $C_\varepsilon > 0$ such that
  \[
    \left| \int_{\Omega}V(x)|u|^p\, \di x \right| \le 
	\varepsilon [u]_{s,p}^p + C_\varepsilon \|V\|_{q;\Omega}
	\|u\|_p^p
  \]
  for all $u \in \widetilde{W}^{s,p}(\Omega)$.
\end{lem}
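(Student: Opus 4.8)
The plan is to interpolate: bound $\int_\Omega V|u|^p$ using Hölder's inequality, then absorb the resulting $L^{pq'}$ norm of $u$ against the Gagliardo seminorm using the compactness of the embedding $W^{s,p}(\Omega)\hookrightarrow L^{pq'}(\Omega)$ (Remark~\ref{rem:hipenq}) together with a standard Ehrling-type argument. First I would apply Hölder with exponents $q$ and $q'$ to get
\[
\left|\int_\Omega V(x)|u|^p\,\di x\right|\le \|V\|_{q;\Omega}\,\big\||u|^p\big\|_{q';\Omega}=\|V\|_{q;\Omega}\,\|u\|_{pq';\Omega}^p .
\]
So it remains to prove the interpolation inequality: for every $\ve>0$ there is $C_\ve>0$ with $\|u\|_{pq';\Omega}^p\le \ve[u]_{s,p}^p+C_\ve\|u\|_p^p$ for all $u\in\widetilde W^{s,p}(\Omega)$.

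The key step is this last inequality, which I would establish by the usual contradiction/compactness argument (Ehrling's lemma). Suppose it fails for some $\ve_0>0$; then there is a sequence $u_k\in\widetilde W^{s,p}(\Omega)$ with $\|u_k\|_{pq';\Omega}^p> \ve_0[u_k]_{s,p}^p+k\|u_k\|_p^p$. Normalize so that $\|u_k\|_{pq';\Omega}=1$. Then $[u_k]_{s,p}^p<1/\ve_0$ and $\|u_k\|_p^p<1/k\to0$, so (recalling $u_k$ is extended by zero, hence $\mathcal H(u_k,u_k)=\tfrac12[u_k]_{s,p}^p$ and $\|u_k\|_{s,p;\R^n}$ is controlled) $\{u_k\}$ is bounded in $W^{s,p}(\R^n)$, and also bounded in $W^{s,p}(\Omega)$. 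By reflexivity a subsequence converges weakly in $W^{s,p}(\Omega)$, and by Theorem~\ref{thm:teoemb} (via Remark~\ref{rem:hipenq}, since $pq'<p_s^*$ when $sp\le n$ and the embedding into $C^{0,\alpha}$ is compact when $sp>n$) it converges strongly in $L^{pq'}(\Omega)$ to some $u$ with $\|u\|_{pq';\Omega}=1$. But $\|u_k\|_p\to0$ forces $u_k\to0$ in $L^p(\Omega)$, hence $u=0$ a.e., contradicting $\|u\|_{pq';\Omega}=1$.

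The main obstacle is mostly bookkeeping: making sure the relevant norms are taken over the right sets and that the zero-extension convention is used consistently so that boundedness of $[u_k]_{s,p}=[u_k]_{s,p;\R^n}$ (not merely the seminorm over $\Omega\times\Omega$) is available — this is where $\widetilde W^{s,p}(\Omega)$, rather than a generic $W^{s,p}(\Omega)$ element, is essential, and it is also needed so that the compact embeddings of Theorem~\ref{thm:teoemb} (stated for $W^{s,p}(\Omega)$ on the bounded extension domain $\Omega$) can be applied to $u_k$ restricted to $\Omega$. One should also note the trivial case $u\equiv0$ separately (where the inequality holds with any $C_\ve$), and, if desired, track the dependence of $C_\ve$ to confirm it does not depend on $V$ or $u$. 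Combining the interpolation inequality with the Hölder estimate above, and relabeling $\ve$, yields the claimed bound.
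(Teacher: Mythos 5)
Your proof is correct and follows essentially the same approach as the paper: Hölder's inequality combined with a compactness/contradiction (Ehrling-type) argument using the compact embedding of $\widetilde W^{s,p}(\Omega)$ into $L^{pq'}(\Omega)$. The only difference is presentational — you apply Hölder first to isolate a $V$-independent interpolation inequality and run the contradiction on that, whereas the paper assumes the negation of the full statement (splitting off the trivial case $V\equiv 0$) and applies Hölder inside the contradiction argument; both routes lead to the same compactness contradiction.
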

\begin{proof}
  The lemma is trivial for $V\equiv 0$, so let us suppose that $V \not\equiv 0$. Assume by contradiction that there exist $\varepsilon_0>0$ and a sequence $\{u_k\}_{k \in \N} \subset \widetilde{W}^{s,p}(\Omega)$ such $\|u_k\|_{pq'} = 1$ and
  \[
    \varepsilon_0 [u_k]_{s,p}^p + k\|V\|_{q;\Omega} \|u_k\|_p^p
	\le \left|\int_{\Omega}V(x)|u_k|^p\, dx\right| \quad \text{for all } k \in \N.
  \]
  Then, by H\"older inequality,
  \begin{equation}\label{eq:la1.1}
    \varepsilon_0 [u_k]_{s,p}^p + k\|V\|_{q;\Omega} \|u_k\|_p^p
	\le \|V\|_{q;\Omega} \|u_k\|_{pq'}^p \quad \text{for all } k \in \N.
  \end{equation}
  Therefore $\{u_k\}_{k \in \N}$ is bounded in $\widetilde W^{s,p}(\Omega)$ and 
  \begin{equation}\label{eq:lau1.2}
    u_k\to 0 \text{ in } L^p(\R^n).
  \end{equation}
  
  Now, as $\widetilde W^{s,p}(\Omega)$ is continuously embedded in $W^{s,p}(\Omega)$, and this compactly in $L^{pq'}(\Omega)$ (by Theorem~\ref{thm:teoemb} and Remark~\ref{rem:hipenq}), there exist a subsequence (still denoted by $\{u_k\}_{k\in\mathbb{N}})$, and some $u\in L^{pq^\prime}(\Omega)$ such that $u_k\to u$ in $L^{pq^\prime}(\Omega)$. Then  $\|u\|_{pq'} = 1$, which contradicts \eqref{eq:lau1.2} and completes the proofs.
  \end{proof}
	
Using the previous lemma and standard compactness argument, see \cite[Theorem~2.7]{FBDP}, it follows that there is an eigenfunction associated to $\lambda(V)$, as the next theorem states.

\begin{thm}\label{thm:aut.existencia}
	Let $\Omega \subset \R^n$ be a bounded extension domain. Then there exists $u \in \widetilde{W}^{s,p}(\Omega)$ such that $\|u\|_p = 1$ and
	\[
		\lambda(V) = \dfrac12[u]_{s,p}^p + \int_{\Omega} V(x)|u|^p\, \di x.
	\]
	Moreover, $u$ is an eigenfunction associated to 
	$\lambda(V)$.
\end{thm}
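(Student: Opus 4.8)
The plan is to obtain $u$ as a minimizer of the Rayleigh-type quotient in \eqref{first-eigenvalue} via the direct method, and then verify that a minimizer is indeed a weak solution of \eqref{eq:autovalores} with $\lambda = \lambda(V)$.

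\emph{Step 1: the infimum is finite and a minimizing sequence is bounded.} First I would note that $\lambda(V) > -\infty$: applying Lemma~\ref{lema:aux1} with a fixed $\ve \in (0,\tfrac12)$ gives, for any admissible $u$ (so $\|u\|_p = 1$),
\[
  \tfrac12 [u]_{s,p}^p + \int_\Omega V|u|^p\,\di x \ge \left(\tfrac12 - \ve\right)[u]_{s,p}^p - C_\ve \|V\|_{q;\Omega} \ge -C_\ve\|V\|_{q;\Omega},
\]
so $\lambda(V)$ is a well-defined real number. The same computation shows that along a minimizing sequence $\{u_k\} \subset \widetilde W^{s,p}(\Omega)$ with $\|u_k\|_p = 1$ the seminorms $[u_k]_{s,p}$ stay bounded, hence $\{u_k\}$ is bounded in $\widetilde W^{s,p}(\Omega)$ (recall $\|u_k\|_{\widetilde W^{s,p}(\Omega)}^p = \|u_k\|_p^p + [u_k]_{s,p}^p$).

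\emph{Step 2: pass to the limit.} By reflexivity of $W^{s,p}(\R^n)$ there is a subsequence with $u_k \rightharpoonup u$ weakly in $\widetilde W^{s,p}(\Omega)$, and by the compact embedding $\widetilde W^{s,p}(\Omega) \hookrightarrow\hookrightarrow L^p(\Omega)$ (Remark~\ref{rem:hipenq}) we may assume $u_k \to u$ strongly in $L^p(\Omega)$ and a.e.; in particular $\|u\|_p = 1$, so $u$ is admissible. For the energy I would use lower semicontinuity of $u \mapsto [u]_{s,p}^p$ under weak convergence (it is the $L^p(\R^n\times\R^n)$-norm, to the $p$-th power, of the Gagliardo difference quotient, which is weakly lower semicontinuous by convexity) together with strong $L^{pq'}$-convergence $u_k \to u$ (again Remark~\ref{rem:hipenq}) and H\"older to get $\int_\Omega V|u_k|^p\,\di x \to \int_\Omega V|u|^p\,\di x$. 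Combining, $\tfrac12[u]_{s,p}^p + \int_\Omega V|u|^p \le \liminf_k\big(\tfrac12[u_k]_{s,p}^p + \int_\Omega V|u_k|^p\big) = \lambda(V)$, and since $u$ is admissible the reverse inequality holds trivially, giving equality.

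\emph{Step 3: the minimizer is an eigenfunction.} Here I would use Lagrange multipliers: the functionals $u \mapsto \tfrac12[u]_{s,p}^p + \int_\Omega V|u|^p$ and $u \mapsto \tfrac1p\|u\|_p^p$ are $C^1$ on $\widetilde W^{s,p}(\Omega)$ (the differentiability of the first term follows from Proposition~\ref{weak.form} and the second is standard), so a constrained minimizer $u$ satisfies, for every $v \in \widetilde W^{s,p}(\Omega)$,
\[
  \h(u,v) + \int_\Omega V|u|^{p-2}uv\,\di x = \mu \int_{\R^n} |u|^{p-2}uv\,\di x
\]
for some $\mu \in \R$; testing with $v = u$ and using $\h(u,u) = \tfrac12[u]_{s,p}^p$, $\|u\|_p = 1$, identifies $\mu = \lambda(V)$, which is exactly \eqref{eq:plantdebil}. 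Since $u \not\equiv 0$, it is an eigenfunction associated to $\lambda(V)$, as claimed.

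The main obstacle is Step 2, specifically making sure that the weakly convergent subsequence genuinely converges weakly in $\widetilde W^{s,p}(\Omega)$ (i.e. that the zero-extensions converge weakly in $W^{s,p}(\R^n)$, so that the limit again lies in $\widetilde W^{s,p}(\Omega)$ and $[\,\cdot\,]_{s,p}$ is taken over all of $\R^n\times\R^n$), and pairing the weak lower semicontinuity of the leading term with the strong $L^{pq'}$-convergence needed to pass to the limit in the potential term; the rest is routine, and indeed the excerpt points to \cite[Theorem~2.7]{FBDP} for exactly this argument in the $p$-Laplacian case.
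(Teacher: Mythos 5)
Your proposal is correct and is precisely the ``standard compactness argument'' the paper invokes by citing \cite[Theorem~2.7]{FBDP}: coercivity via Lemma~\ref{lema:aux1}, weak compactness in the (weakly closed, because norm-closed and convex) subspace $\widetilde W^{s,p}(\Omega)\subset W^{s,p}(\R^n)$, weak lower semicontinuity of the Gagliardo seminorm paired with strong $L^{pq'}$-convergence for the potential term, and a Lagrange multiplier identification of $\mu=\lambda(V)$ by testing with $v=u$. The paper gives no further details, so there is nothing to compare beyond the observation that you have filled them in correctly.
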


\begin{rem} \label{constant-sign}
  Any eigenfunction $u$ constructed in the previous theorem can be chosen to be positive. Indeed, as $||u(x)|-|u(y)|| \le |u(x) - u(y)|$ for all $x,y \in \R^n$, then
  \[
    \dfrac12[|u|]_{s,p}^p + \int_{\Omega} V(x)|u|^p\, \di x \le 
    \dfrac12[u]_{s,p}^p + \int_{\Omega} V(x)|u|^p\, \di x = \lambda(V).
  \]  
  This implies that $|u|$ is an eigenfunction associated to $\lambda(V)$. And by Lemma \ref{lema:signo-constante},  $|u| > 0$. Actually, Theorem~\ref{thm:autoval1} below shows that all the eigenfunctions associated to $\lambda(V)$ have constant sign. 
\end{rem}

A key ingredient in the next sections is the simplicity of the first eigenvalue $\lambda(V)$. In order to prove this result we need the following Picone-type identity (see Lemma~6.2  in \cite{Amghibech}).

\begin{lem}\label{lem:laux1}  
Let $p\in(1,\infty)$. For $u,v\colon\Omega\to\mathbb{R}$ such that $u \ge 0$ and $v>0$,  we have
\[
	L(u,v)\ge0 \quad \mbox{in } \Omega\times \Omega,
\]
where
\[
	L(u,v)(x,y) = |u(x)-u(y)|^p-|v(x)-v(y)|^{p-2}(v(x) - v(y)) \left(\dfrac{u^p(x)}{v^{p-1}(x)} -\dfrac{u^p(y)}{v^{p-1}(y)}\right). 
\]
The equality holds if and only if $u=kv$  in $\Omega$ 
for some constant $k$.
\end{lem}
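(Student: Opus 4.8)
The plan is to prove the Picone-type inequality pointwise, fixing $x,y \in \Omega$ and writing $a = u(x)$, $b = u(y)$, $c = v(x)$, $d = v(y)$ with $a,b \ge 0$ and $c,d > 0$. After dividing by $c^{p-1}$ or $d^{p-1}$ appropriately, the inequality reduces to a one-variable convexity statement. Concretely, one rewrites
\[
  L(u,v)(x,y) = |a-b|^p - |c-d|^{p-2}(c-d)\left(\frac{a^p}{c^{p-1}} - \frac{b^p}{d^{p-1}}\right),
\]
and the key is to recognize that the subtracted term can be bounded using the discrete version of the convexity inequality $|X|^p \ge |Y|^p + p|Y|^{p-2}Y(X-Y)$, which holds for all $X,Y \in \R$ since $t \mapsto |t|^p$ is convex on $\R$ when $p > 1$.

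First I would set, for the pair $(x,y)$, the quantity $Y = c - d$ and an appropriate $X$ built out of $a,b,c,d$ so that the convexity inequality applied to $X,Y$ produces exactly the terms appearing in $L(u,v)$. The natural choice, following Amghibech, is to exploit homogeneity: the statement is invariant under scaling $u \mapsto tu$, so one may normalize, and the inequality becomes a statement about the function $g(t) = |t|^p$ and its behavior on segments. More precisely, one uses that for $s,t \ge 0$ and any real $\alpha,\beta$,
\[
  \alpha^p \beta^{1-p} \ge \text{(linear expression coming from } |\alpha - \beta|^{p-2}(\alpha-\beta)\text{)},
\]
together with the elementary inequality $|a-b|^p \ge$ a suitable bilinear-in-$(a^p,b^p)$ lower bound. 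The cleanest route is: apply the convexity inequality with $X = \tfrac{a}{c}(c-d) + (\text{something})$; alternatively, invoke directly the known scalar Picone inequality of Allegretto–Huang type, $\lvert \nabla v\rvert^{p-2}\nabla v \cdot \nabla(u^p/v^{p-1}) \le |\nabla u|^p$, whose discrete analogue is exactly what is claimed, and reproduce its short proof in the discrete setting.

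The main steps, in order, are: (1) reduce to the pointwise scalar inequality $L(u,v)(x,y) \ge 0$ by noting the integrand is what matters; (2) by homogeneity of degree $p$ in $u$, normalize so that the scalar inequality takes a one-parameter form; (3) prove the scalar inequality $\lvert a - b\rvert^p \ge \lvert c-d\rvert^{p-2}(c-d)\big(\tfrac{a^p}{c^{p-1}} - \tfrac{b^p}{d^{p-1}}\big)$ using convexity of $t \mapsto |t|^p$, typically by writing both sides in terms of the ratios $a/c$ and $b/d$ and applying the tangent-line (subgradient) inequality for the convex function $t\mapsto t^p$ on $[0,\infty)$; (4) trace the equality case: equality in the convexity/subgradient step forces the ratios $a/c$ and $b/d$ to coincide, i.e. $u(x)/v(x) = u(y)/v(y)$ for a.e.\ pair, which (using connectedness of $\Omega$, or simply that it holds for all pairs) yields $u = kv$ in $\Omega$; conversely $u = kv$ makes $L(u,v) \equiv 0$ by direct substitution.

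The hard part will be step (3) together with the sharp tracking of the equality case in step (4): one must choose the right substitution so that the discrete Picone inequality becomes a transparent consequence of scalar convexity, and then verify that equality propagates correctly — in particular that pointwise equality for all $(x,y)$ genuinely forces the global proportionality $u \equiv k v$ rather than merely local proportionality on pieces. Since the statement and its proof are classical (Amghibech, Lemma 6.2), I would simply cite that reference for the computation and only spell out the normalization and the equality-case discussion, which are the parts a reader most wants to see made explicit.
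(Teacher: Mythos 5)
The paper does not actually prove this lemma; it simply cites Lemma~6.2 of Amghibech, which is exactly what you propose to do at the end of your write-up. So at the level of what the paper records, your proposal and the paper coincide. A few comments on the sketch you offer in case you decide to spell it out rather than cite.

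The overall strategy --- reduce to a pointwise scalar inequality in $a=u(x),b=u(y),c=v(x),d=v(y)$ and attack it with convexity of $t\mapsto|t|^p$ --- is the right one. However, the specific recipe you suggest, namely take $Y=c-d$ and find ``an appropriate $X$'' so that a single application of the tangent-line inequality $|X|^p\ge |Y|^p+p|Y|^{p-2}Y(X-Y)$ reproduces the Picone expression, does not go through as stated: with the naive choice $X=a-b$ the right-hand side becomes $|c-d|^p+p|c-d|^{p-2}(c-d)\bigl((a-b)-(c-d)\bigr)$, which is not $|c-d|^{p-2}(c-d)\bigl(a^p/c^{p-1}-b^p/d^{p-1}\bigr)$, and no other affine choice of $X$ will produce the nonlinear terms $a^p/c^{p-1}$, $b^p/d^{p-1}$. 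The actual argument (in Amghibech, and also in Frank--Seiringer and Brasco--Franzina) needs \emph{two} applications of the scalar inequality $t^p\ge s^p+p\,s^{p-1}(t-s)$ for $s,t\ge0$ --- once with the pair built from $a,c,d$ and once with the pair built from $b,c,d$ --- followed by a case split on the sign of $c-d$, together with Young's inequality. So ``one convexity step with a clever substitution'' undersells the bookkeeping.

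Two smaller points. The statement is not \emph{invariant} under $u\mapsto tu$; it is homogeneous of degree $p$, $L(tu,v)=t^pL(u,v)$, which is what actually lets you normalize. And your remark on the equality case is correct and worth keeping: since $L(u,v)(x,y)=0$ is required for \emph{all} pairs $(x,y)\in\Omega\times\Omega$, not merely neighboring ones, equality forces $u(x)/v(x)=u(y)/v(y)$ globally, so $u=kv$ on all of $\Omega$ without any connectedness hypothesis; the converse is a direct substitution.
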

	
\begin{thm}\label{thm:autoval1}
	Let $\Omega \subset \R^n$ be a bounded extension domain. 
	Assume that $u$ is a positive eigenfunction 
	corresponding to $\lambda(V)$ (see Remark~\ref{constant-sign}). Then if $
	\lambda > 0$ is such that there is a nonnegative eigenfunction $v$ of 
	\eqref{eq:autovalores} with eigenvalue $\lambda,$ then 
	$\lambda = \lambda(V)$ and there is  $k \in \R$ 
	such that $v = ku$ a.e. in $\Omega$. 
\end{thm}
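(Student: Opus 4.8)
The plan is to use the Picone-type identity from Lemma~\ref{lem:laux1} together with the variational characterization \eqref{first-eigenvalue} of $\lambda(V)$. The strategy is classical (going back to Lindqvist, and used in \cite{FBDP, Amghibech}) but must be adapted to the nonlocal setting. First I would fix $u>0$ an eigenfunction for $\lambda(V)$ and $v\ge 0$ an eigenfunction for some $\lambda>0$; by Lemma~\ref{lema:signo-constante} applied to $v$ we may assume $v>0$ a.e.\ in $\Omega$ (note $v$ is a nonnegative solution, hence in particular a nonnegative supersolution of the corresponding equation with potential $V-\lambda$, which still lies in $L^q$). The idea is then to plug the test function $w=\dfrac{u^p}{(v+\ve)^{p-1}}$ (a small regularization $\ve>0$ is convenient to keep the denominator bounded away from zero, and because $u$ vanishes outside $\Omega$ this is an admissible element of $\widetilde{W}^{s,p}(\Omega)$) into the weak formulation \eqref{eq:plantdebil} for $v$.

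The key computation is the following. Using $w$ as test function for $v$ gives
\[
\h(v,w) + \int_\Omega V(x)\,v^{p-1} w\,\di x = \lambda \int_\Omega v^{p-1} w\,\di x.
\]
Expanding $\h(v,w)$ and recognizing the integrand of the Picone identity, one gets
\[
\h(v,w) = \frac12 \iint_{\R^n\times\R^n} \frac{|v(x)-v(y)|^{p-2}(v(x)-v(y))}{|x-y|^{n+sp}}\left( \frac{u^p(x)}{(v(x)+\ve)^{p-1}} - \frac{u^p(y)}{(v(y)+\ve)^{p-1}} \right) \di x\,\di y.
\]
By Lemma~\ref{lem:laux1} (applied with $v$ replaced by $v+\ve$, which is still positive, and noting $u(x)-u(y)$ is unchanged) this is bounded above by $\h(u,u) = \frac12[u]_{s,p}^p$ minus the nonnegative remainder $\frac12\iint |x-y|^{-n-sp} L(u,v+\ve)\,\di x\,\di y$. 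On the other hand, $v^{p-1}w = v^{p-1}u^p/(v+\ve)^{p-1} \le u^p$, and $v^{p-1}/(v+\ve)^{p-1} \to 1$ a.e.\ as $\ve\to 0$, so by dominated convergence (using $u\in L^p$ and $V\in L^q$ with $u^p\in L^{q'}$ by Remark~\ref{rem:hipenq}) both $\int_\Omega V v^{p-1}w$ and $\lambda\int_\Omega v^{p-1} w$ converge to $\int_\Omega V u^p$ and $\lambda\int_\Omega u^p = \lambda$ respectively as $\ve\to 0$. Combining, after letting $\ve\to0$ (the remainder term only helps, being nonnegative; Fatou handles it), we obtain
\[
\lambda = \lambda\int_\Omega u^p \le \frac12[u]_{s,p}^p + \int_\Omega V(x) u^p\,\di x = \lambda(V).
\]
Since by the minimality in \eqref{first-eigenvalue} every eigenvalue with a constant-sign (hence admissible, after normalization) eigenfunction satisfies $\lambda \ge \lambda(V)$ — testing \eqref{eq:plantdebil} for $v$ against $v$ itself and using $\|v\|_p$-normalization gives $\lambda = \frac12[v]_{s,p}^p/\|v\|_p^p + \int V |v|^p/\|v\|_p^p \ge \lambda(V)$ — we conclude $\lambda=\lambda(V)$.

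It remains to identify the eigenfunctions. Once $\lambda=\lambda(V)$, all the inequalities above are equalities; in particular the Picone remainder must vanish in the limit, i.e.\ $\iint |x-y|^{-n-sp} L(u,v)\,\di x\,\di y = 0$ (one has to be a little careful passing the equality through the $\ve\to 0$ limit: since $v^{p-1}w\uparrow u^p$ monotonically one can argue with monotone/dominated convergence that the $\ve=0$ Picone remainder is also zero, or alternatively run the argument directly with $\ve=0$ once $v>0$ a.e.\ is known, keeping track of integrability). By the equality case of Lemma~\ref{lem:laux1}, $u = k v$ a.e.\ in $\Omega$ for some constant $k\in\R$, which is the desired conclusion. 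The main obstacle I anticipate is the integrability/approximation bookkeeping: ensuring that $w=u^p/(v+\ve)^{p-1}$ genuinely lies in $\widetilde{W}^{s,p}(\Omega)$ (this needs a chain-rule/truncation estimate for the Gagliardo seminorm of a quotient, which is where the $\ve$-regularization and the boundedness of $v$ near $\partial\Omega$ in the interior sense are used), and that all limit passages as $\ve\to0$ are justified — in particular handling the possibly-signed term $\int_\Omega V v^{p-1} w$ and the Picone remainder simultaneously so that no term is lost. Everything else is the standard Picone argument.
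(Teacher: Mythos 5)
Your proposal is correct and follows essentially the same Picone-based route as the paper: regularize with $v_m=v+\tfrac1m$, verify $w_m=u^p/v_m^{p-1}\in\widetilde W^{s,p}(\Omega)$, use Lemma~\ref{lem:laux1} together with the weak formulations to obtain the key inequality, and pass to the limit by Fatou and dominated convergence to force $\lambda=\lambda(V)$ and $\iint L(u,v)\,|x-y|^{-n-sp}=0$. The one step you flag as an obstacle, showing $w_\varepsilon\in\widetilde W^{s,p}(\Omega)$, is handled in the paper through the $L^\infty$ bound on $u$ from Lemma~\ref{lema:cotainfty} (not via any boundedness of $v$ near $\partial\Omega$), after which the difference quotient of $w_\varepsilon$ is controlled by those of $u$ and $v$ with an $\varepsilon$-dependent constant.
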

\begin{proof} 
  From the definition of $\lambda(V)$, immediately follows that
  $\lambda(V) \le \lambda$. On the other hand, by Lemma  
  \ref{lema:signo-constante}, $v>0$ in $\Omega.$
  
  Let $m \in \N$ and $v_m\coloneqq v + \tfrac1{m}$. We begin by proving that 
  $w_{m} \coloneqq u^{p} / v_m^{p-1}\in\widetilde{W}^{s,p}(\Omega)$.
 First observe that that  $w_m=0$ in $\R^n \setminus \Omega$ and $w_{m}\in L^{p}(\Omega),$ due to $u\in L^{\infty}(\Omega)$, see Lemma \ref{lema:cotainfty}. Now, for all $(x,y) \in \R^n \times \R^n$ we have
  \begin{align*}
    |w_{m}(x)-w_{m}(y)| =& \left| \dfrac{u^p(x) - u^p(y)}{v_m^{p-1}(x)} - \dfrac{u^p(y) \left( v_m ^{p-1}(x) - v_m^{p-1}(y) \right)}{v_m^{p-1}(x) v_m^{p-1}(y)} \right| \\
		\le &\ m^{p-1} \left|u^p(x) - u^p(y) \right| + \|u\|_{\infty}^p \dfrac{\left|v_m^{p-1}(x) - v_m^{p-1}(y) \right|}{v_m^{p-1}(x)v_m^{p-1}(y)} \\
		\le&\ p m^{p-1} (u^{p-1}(x) + u^{p-1}(y))|u(x) - u(y)| \\
		& + (p-1)\|u\|_{\infty}^p \dfrac{v_m^{p-2}(x) + v_m^{p-2}(y)}{v_m^{p-1}(x) v_m^{p-1}(y)}|v_m(x)-v_m(y)|\\
		\le &\ 2pm^{p-1} \|u\|_{\infty}^{p-1} |u(x)-u(y)| \\
		& + (p-1) \|u\|_{\infty}^p \left(\dfrac1{v_m(x) v_m^{p-1}(y)} + \dfrac1{v_m^{p-1}(x) v_m(y)} \right)|v(x)-v(y)| \\
	 	\le &\ C(m,p,\|u\|_{\infty})
		\left( |u(x)-u(y)|+|v(x)-v(y)| \right)
	  \end{align*} 
  As $u,v \in \widetilde W^{s,p}(\Omega)$, we deduce that $w_m \in \widetilde W^{s,p}(\Omega)$ for all $m \in \N$.

  Recall that $u,v\in\widetilde{W}^{s,p}(\Omega)$ are two eigenfunctions of problem  \eqref{eq:autovalores}  with eigenvalue $\lambda(V)$ and 	$\lambda$ respectively. Then, by using the previous lemma, we deduce that 
  \begin{align*}
    0 \le& \dfrac12 \iint_{\Omega\times \Omega} \dfrac{L(u,v_m)(x,y)}{|x-y|^{n+sp}} \, \di x \di y \\
    	\le &\ \frac{1}{2} \iint_{\R^n\times \R^n} \dfrac{|u(x)-u(y)|^p}{|x-y|^{n+sp}} \, \di x \di y \\ 
    	& - \frac{1}{2} \iint_{\R^n\times \R^n} \dfrac{|v(x)-v(y)|^{p-2}(v(x)-v(y))}{|x-y|^{n+sp}}	\left(\dfrac{u^p(x)}{v_m^{p-1}(x)} - \dfrac{u^p(y)}{v_m^{p-1}(y)} \right) \, \di x \di y \\
		\le &\ \lambda(V) \int_{\Omega} u^p \, \di x - \int_{\Omega}V(x)u^p \, \di x - \lambda \int_{\Omega} v^{p-1} \dfrac{u^p}{v_m^{p-1}}\, \di x + \int_{\Omega}V(x)v^{p-1} \dfrac{u^p}{v_m^{p-1}}\, \di x.
	\end{align*}
	Taking $m \to \infty$ and using Fatou's lemma and the dominated convergence theorem, we infer that
	\[
      \iint_{\R^n\times \R^n} \dfrac{L(u,v)(x,y)}{|x-y|^{n+sp}} \, \di x \di y = 0
	\]
	(recall that $\lambda(V)\le \lambda$).
	
	Therefore, by the previous lemma, $L(u,v)(x,y)=0$ a.e. and $u=kv$ for 
	some constant $k>0$.
\end{proof}
	
\begin{rem}\label{remark:unicidad}
  As a consequence of the previous theorem, $\lambda(V)$ is simple and there 
  is a unique 
  associated positive eigenfunction $u \in \widetilde{W}^{s,p}(\Omega)$ such that $\|u\|_p = 1$.
\end{rem}

To conclude this section, we prove that $\lambda(V)$ is isolated. To this end, we follow the ideas in \cite{LL} and first provide a lower bound for the measure of the nodal sets.
	
\begin{lem}\label{lema:nodal}
  Let $\Omega \subset \R^n$ be be a  bounded extension domain. 
  If $u$ is an eigenfunction associated to $\lambda > \lambda(V)$, then 
  \[
    \min\left\{A(\lambda)^{\tfrac{1}{(1-\tfrac{p}{r})}}, A(\lambda)^{\tfrac{1}{(\tfrac{1}{q'} - \tfrac{p}{r})}} \right\} \le |\Omega_{\pm}|,
  \]
  where $r \in (pq^\prime,p_s^*)$, $A(\lambda) \coloneqq (C(|\lambda|+1+\|V\|_{q;\Omega}))^{-1}$, $C$ is a constant independent of $V$, $\lambda$ and $u$, and $|\Omega_{\pm}|$ is the Lebesgue measure of $\Omega_{\pm}=\{x\in\R^n \colon u_{\pm}(x)\neq0\}$.	
\end{lem}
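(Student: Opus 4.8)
The plan is to test the weak formulation \eqref{eq:plantdebil} against the positive part $u_+$, and to combine the resulting inequality with H\"older's inequality on $\Omega_+$ and a Sobolev embedding; the estimate for $|\Omega_-|$ then follows by running the same argument with $-u$ in place of $u$. Two preliminary remarks: first, $u_+\in\widetilde W^{s,p}(\Omega)$ exactly as in Remark~\ref{constant-sign}; second, $u_+\not\equiv0$ and $u_-\not\equiv0$, since an eigenfunction for $\lambda>\lambda(V)$ must change sign---otherwise, up to sign and by Lemma~\ref{lema:signo-constante} it would be strictly positive, and the Picone comparison of Theorem~\ref{thm:autoval1} (against the first eigenfunction) would force $\lambda=\lambda(V)$. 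First I would record the elementary inequality $|a-b|^{p-2}(a-b)(a_+-b_+)\ge|a_+-b_+|^p$, valid for all $a,b\in\R$, which yields $\h(u,u_+)\ge\frac12[u_+]_{s,p}^p$ (this replaces the identity $\h(u,u_+)=\frac12[u_+]_{s,p}^p$, which fails in the nonlocal setting, but fortunately the inequality points the right way). Since $|u|^{p-2}u\,u_+=u_+^p$ a.e., taking $v=u_+$ in \eqref{eq:plantdebil}, adding $\frac12\|u_+\|_p^p$ to both sides, and estimating the potential term by $\big|\int_\Omega Vu_+^p\big|\le\|V\|_{q;\Omega}\|u_+\|_{pq';\Omega}^p$, one obtains
\[
\tfrac12\|u_+\|_{s,p}^p\le\big(|\lambda|+\tfrac12\big)\|u_+\|_p^p+\|V\|_{q;\Omega}\|u_+\|_{pq';\Omega}^p.
\]

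Next, fix $r\in(pq',p_s^*)$ (a nonempty interval, by Remark~\ref{rem:hipenq}). Since $p<pq'<r$, H\"older's inequality on $\Omega_+$ gives $\|u_+\|_p^p\le|\Omega_+|^{1-p/r}\|u_+\|_{r;\Omega}^p$ and $\|u_+\|_{pq';\Omega}^p\le|\Omega_+|^{1/q'-p/r}\|u_+\|_{r;\Omega}^p$, while the continuous embedding $W^{s,p}(\Omega)\hookrightarrow L^r(\Omega)$ of Theorem~\ref{thm:teoembcont} (available because $r<p_s^*$), together with $\widetilde W^{s,p}(\Omega)\hookrightarrow W^{s,p}(\Omega)$, gives $\|u_+\|_{r;\Omega}^p\le C\|u_+\|_{s,p}^p$ with $C$ depending only on $n,s,p,r,\Omega$. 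Substituting and dividing by $\|u_+\|_{s,p}^p>0$, and absorbing all constants into one $C$ (independent of $V,\lambda,u$), we arrive at
\[
1\le C\big(|\lambda|+1+\|V\|_{q;\Omega}\big)\Big(|\Omega_+|^{1-\frac pr}+|\Omega_+|^{\frac1{q'}-\frac pr}\Big).
\]

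To conclude, note that $r>p$ and $r>pq'$ make both exponents $1-\frac pr$ and $\frac1{q'}-\frac pr$ strictly positive, so---after possibly enlarging $C$---the right-hand side is at most $C(|\lambda|+1+\|V\|_{q;\Omega})\max\{|\Omega_+|^{1-p/r},|\Omega_+|^{1/q'-p/r}\}$. Hence either $A(\lambda)\le|\Omega_+|^{1-p/r}$ or $A(\lambda)\le|\Omega_+|^{1/q'-p/r}$, and in either case $\min\{A(\lambda)^{1/(1-p/r)},A(\lambda)^{1/(1/q'-p/r)}\}\le|\Omega_+|$; running the argument with $-u$ yields the same bound for $|\Omega_-|$. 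The H\"older/Sobolev bookkeeping and the handling of constants are routine; the only genuinely delicate points are the pointwise inequality producing $\h(u,u_+)\ge\frac12[u_+]_{s,p}^p$ and the requirement $r>pq'$---this last is exactly what makes \emph{both} powers of $|\Omega_+|$ appear with positive exponent, which is what the two-term minimum in the statement records.
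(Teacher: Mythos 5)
Your proof is correct and follows essentially the same route as the paper's: test with $u_+$, use the pointwise inequality $|u(x)-u(y)|^{p-2}(u(x)-u(y))(u_+(x)-u_+(y))\ge|u_+(x)-u_+(y)|^p$ to get $\h(u,u_+)\ge\tfrac12[u_+]_{s,p}^p$, bound the right-hand side via H\"older (producing the two different exponents of $|\Omega_+|$ because $r>pq'>p$), and close with the Sobolev embedding. The only cosmetic difference is that you add $\tfrac12\|u_+\|_p^p$ to both sides so as to invoke the embedding $W^{s,p}(\Omega)\hookrightarrow L^r(\Omega)$ with the full norm (which also transparently explains the $|\lambda|+1$ in $A(\lambda)$), whereas the paper uses the seminorm form $\|u_+\|_r\le C[u_+]_{s,p}$; both are legitimate.
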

\begin{proof}
  According to Theorem~\ref{thm:autoval1}, $u_+$ and $u_-$ are not trivial. We shall prove the inequality for $|\Omega_+|$, the proof of the other inequality is similar.
  
  Observe that $u_+\in\widetilde{W}^{s,p}(\Omega)$ and
  \[
    |u_+(x)-u_+(y)|^p \le |u(x)-u(y)|^{p-2}(u(x)-u(y))(u_+(x)-u_+(y))
  \]
  for all $(x,y) \in \R^n \times \R^n$. Let us recall Remark~\ref{zero-extension} to keep in mind that $u=0$ in $\R^n\setminus\Omega$. Then, using H\"older's inequality, we have
  \begin{equation}\label{eq:lb1}
    \begin{aligned}
	  \frac{1}{2}[u_+]_{s,p}^p& \le \h(u,u_+)\\
	  &= \lambda \int_{\Omega}u_+^p\, \di x - \int_{\Omega} V(x)u_+^p \di x \\
	  &\le \lambda \int_{\Omega}u_+^p\, \di x + \int_{\Omega} V_-(x)u_+^p \di x \\
	  &\le |\lambda| \|u_+\|_p^p + \|V\|_{q;\Omega} \|u_+\|_{pq'}^p \\
	  &\le \left( |\lambda| |\Omega_+|^{1-\tfrac{p}r} + \|V\|_{q;\Omega} |\Omega_+|^{\tfrac1{q^\prime}-\tfrac{p}{r}} \right) \|u_+\|_r^p.
			\end{aligned}
		\end{equation} 
		
  On the other hand, by Theorem~\ref{thm:teoembcont}, there is a constant $C$ independent on $u$ such that
  \[
    \|u_+\|_r \le C[u_+]_{s,p}.
  \]
  This and \eqref{eq:lb1} implies that
  \[
    \|u_+\|_r^p \le 2C \left( (|\lambda|+1)
				|\Omega_+|^{1-\tfrac{p}r}
				+\|V\|_{q;\Omega} 
				|\Omega_+|^{\tfrac1{q^\prime}-\tfrac{p}{r}}
				\right)\|u_+\|_r^p,
  \]
  that is
  \[
    1 \le 2C \left( (|\lambda|+1) |\Omega_+|^{1-\tfrac{p}r} + \|V\|_{L^q(\Omega)} |\Omega_+|^{\tfrac1{q^\prime}-\tfrac{p}{r}} \right).
  \]
  Therefore
		\[
		 	\min\left\{A(\lambda)^{\tfrac{1}{(1-\tfrac{p}{r})}},
		 	 A(\lambda)^{\tfrac{1}{(\tfrac{1}{q^\prime}
		 	 -\tfrac{p}{r})}}\right\}
		 	 \le |\Omega_+|.
		\]
\end{proof}
	
\begin{thm}\label{teo:isolated}
  Let $\Omega \subset \R^n$ be a connected bounded extension domain. 
  Then the first eigenvalue $\lambda(V)$ is isolated.  	
\end{thm}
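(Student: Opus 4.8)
The plan is to argue by contradiction, along the classical lines of Anane and Lindqvist as adapted in \cite{LL}. Suppose $\lambda(V)$ is not isolated; then there exists a sequence of eigenvalues $\lambda_k \downarrow \lambda(V)$ with $\lambda_k \ne \lambda(V)$, together with associated eigenfunctions $u_k$ normalized by $\|u_k\|_p = 1$. Since $\lambda_k \to \lambda(V)$, the sequence $\{\lambda_k\}$ is bounded, say $|\lambda_k| \le \Lambda$; using $u_k$ itself as a test function in the weak formulation \eqref{eq:plantdebil} and applying Lemma~\ref{lema:aux1} (with a small $\varepsilon$) gives a uniform bound on $[u_k]_{s,p}$, hence $\{u_k\}$ is bounded in $\widetilde W^{s,p}(\Omega)$. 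By Theorem~\ref{thm:teoemb} and Remark~\ref{rem:hipenq} we may pass to a subsequence so that $u_k \rightharpoonup u$ weakly in $\widetilde W^{s,p}(\Omega)$ and $u_k \to u$ strongly in $L^p(\Omega)$ and in $L^{pq'}(\Omega)$; in particular $\|u\|_p = 1$, so $u \not\equiv 0$.

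Next I would show that $u$ is an eigenfunction associated with $\lambda(V)$. Passing to the limit in \eqref{eq:plantdebil} for fixed test function $v$: the right-hand side converges because $|u_k|^{p-2}u_k \to |u|^{p-2}u$ in $L^{p'}$ by the $L^p$ convergence (up to a further subsequence, with a dominating function), and likewise the potential term converges using the $L^{pq'}$ convergence together with $V \in L^q$; the term $\h(u_k,v)$ requires the standard monotonicity/compactness argument for the fractional $p$-Laplacian (as in \cite{BLP}) — one first upgrades weak convergence to strong convergence in $\widetilde W^{s,p}(\Omega)$ by testing the difference of the equations for $u_k$ and $u$ and using the uniform monotonicity of the operator, after which $\h(u_k,v) \to \h(u,v)$ is immediate. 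Thus $u$ is a weak solution of \eqref{eq:autovalores} with eigenvalue $\lambda(V)$. By Theorem~\ref{thm:autoval1} and Remark~\ref{remark:unicidad}, $u$ has constant sign, say $u \ge 0$, and then $u > 0$ a.e.\ in $\Omega$ by Lemma~\ref{lema:signo-constante} (here connectedness of $\Omega$ is used). Consequently, one of the nodal sets of $u$, say $\Omega_-(u) = \{x : u_-(x) \ne 0\}$, has zero measure.

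Now I would derive the contradiction from Lemma~\ref{lema:nodal}. Since each $\lambda_k > \lambda(V)$, the lemma applies to $u_k$ and gives
\[
  \min\left\{ A(\lambda_k)^{\frac{1}{1-p/r}},\ A(\lambda_k)^{\frac{1}{1/q'-p/r}} \right\} \le |\Omega_\pm(u_k)|,
\]
where $A(\lambda_k) = \big(C(|\lambda_k| + 1 + \|V\|_{q;\Omega})\big)^{-1}$. Because $\lambda_k \to \lambda(V)$ and $\|V\|_{q;\Omega}$ is fixed, $A(\lambda_k)$ stays bounded away from $0$ uniformly in $k$, so the left-hand side is bounded below by a positive constant $\gamma > 0$ independent of $k$. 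Hence $|\Omega_\pm(u_k)| \ge \gamma$ for all $k$. On the other hand, by the strong $L^p$ convergence $u_k \to u$ I claim $\liminf_k |\Omega_\pm(u_k)|$ cannot put positive mass on both signs in a way compatible with $|\Omega_-(u)| = 0$: more precisely, since $u_k \to u$ in $L^p$ we have $(u_k)_\pm \to u_\pm$ in $L^p$, and the characteristic functions satisfy $\chi_{\Omega_\pm(u_k)} \to \chi_{\Omega_\pm(u)}$ in measure on the set where $u \ne 0$; combined with $u > 0$ a.e.\ this forces $|\Omega_-(u_k)| \to 0$, contradicting $|\Omega_-(u_k)| \ge \gamma > 0$.

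The main obstacle is the passage to the limit in the nonlocal term $\h(u_k,v)$, i.e.\ establishing strong convergence $u_k \to u$ in $\widetilde W^{s,p}(\Omega)$; this is the usual $S_+$-property of the fractional $p$-Laplacian and needs the monotonicity inequality $\langle (-\Delta_p)^s w_1 - (-\Delta_p)^s w_2,\, w_1 - w_2\rangle \ge 0$ together with the fact that the lower-order terms converge, so that $\limsup_k \h(u_k, u_k - u) \le 0$. A secondary delicate point is making rigorous the claim $|\Omega_-(u_k)| \to 0$ from $L^p$ convergence; the clean way is to note $\int_\Omega (u_k)_-^p \,\di x \to \int_\Omega u_-^p\,\di x = 0$, which by itself does not bound $|\Omega_-(u_k)|$, so instead one should argue directly with the seminorm bound: from \eqref{eq:lb1} applied to $u_k$ on its negative nodal set one gets $1 \le 2C\big((|\lambda_k|+1)|\Omega_-(u_k)|^{1-p/r} + \|V\|_{q;\Omega}|\Omega_-(u_k)|^{1/q'-p/r}\big)$, which is exactly the content of Lemma~\ref{lema:nodal}; thus the contradiction is purely with $u$ having a nodal set of zero measure, and one only needs $(u_k)_- \to u_- = 0$ in $\widetilde W^{s,p}(\Omega)$, which follows from the already-established strong convergence $u_k \to u$ in $\widetilde W^{s,p}(\Omega)$ since $w \mapsto w_-$ is continuous on $\widetilde W^{s,p}(\Omega)$. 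With $(u_k)_- \to 0$ in $\widetilde W^{s,p}(\Omega)$ and hence $[(u_k)_-]_{s,p} \to 0$, the inequality from \eqref{eq:lb1} is violated for large $k$, completing the proof.
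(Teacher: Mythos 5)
Your overall strategy is the same as the paper's (contradiction via a sequence $\lambda_k\searrow\lambda(V)$, uniform bounds from Lemma~\ref{lema:aux1}, weak compactness, identifying the limit $u$ as a first eigenfunction, and then contradicting Lemma~\ref{lema:nodal}), but there are two places where you and the paper genuinely diverge, and the second one contains a gap.

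First, to show that the weak limit $u$ is a first eigenfunction you propose passing to the limit in the weak equation, which forces you to invoke strong convergence $u_k\to u$ in $\widetilde W^{s,p}(\Omega)$ via an $(S_+)$-type argument for $(-\Delta_p)^s$. This can be done but is not needed: the paper sidesteps it entirely by using the variational characterization \eqref{first-eigenvalue}. Since $\tfrac12[u_k]_{s,p}^p = \lambda_k - \int_\Omega V|u_k|^p\,\di x$ and both terms on the right converge (using $\lambda_k\to\lambda(V)$, $\|u_k\|_p=1$, and $|u_k|^p\to|u|^p$ in $L^{q'}$), weak lower semicontinuity of the Gagliardo seminorm gives $\tfrac12[u]_{s,p}^p+\int_\Omega V|u|^p\,\di x\le\lambda(V)$ with $\|u\|_p=1$, so $u$ is a minimizer and hence a first eigenfunction. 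This is markedly cleaner.

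Second, and more seriously, your mechanism for the final contradiction does not work. You claim that $[(u_k)_-]_{s,p}\to 0$ "violates the inequality from \eqref{eq:lb1}," but that inequality is homogeneous in $(u_k)_-$: after inserting the Sobolev bound $\|(u_k)_-\|_r\le C[(u_k)_-]_{s,p}$, the factor $[(u_k)_-]_{s,p}^p$ cancels on both sides, which is precisely how Lemma~\ref{lema:nodal} produces a bound on the \emph{measure} $|\Omega_-(u_k)|$ and nothing else. So $[(u_k)_-]_{s,p}\to 0$ is perfectly consistent with the lemma and gives no contradiction. What you actually need is $|\Omega_-(u_k)|\to 0$, and this does not follow from $(u_k)_-\to 0$ in $\widetilde W^{s,p}(\Omega)$ (nor in $L^p$): a sequence can be uniformly small on sets whose measure stays bounded below. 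The paper obtains $|\{u_k<0\}|\to 0$ from Egorov's theorem: after extracting an a.e.\ convergent subsequence, for every $\eta>0$ one picks $m$ with $|\{u\le 1/m\}|<\eta/2$ and a set $U$ with $|U|<\eta/2$ on which $u_k\to u$ uniformly, so that for large $k$ one has $u_k>0$ on $\Omega\setminus(U\cup\{u\le 1/m\})$ and hence $|\{u_k<0\}|<\eta$. That pointwise/Egorov step is the essential ingredient you are missing, and your "convergence in measure of the characteristic functions" sketch, which you yourself abandon, was closer to the right idea than the replacement via the seminorm. Your argument also never uses the connectedness of $\Omega$ in the way the paper does: connectedness is what guarantees that the limiting first eigenfunction $u$ is strictly positive a.e.\ on all of $\Omega$, which is exactly what feeds into the Egorov step.
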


\begin{proof}
  By definition $\lambda(V)$ is left-isolated. To prove that $\lambda(V)$ is right-isolated, we argue by contradiction. We assume that there exists a a sequence of 
  eigenvalues $\{\lambda_k\}_{k \in \N}$ such that $\lambda_k \searrow \lambda(V)$ as $k\to \infty$. Let $u_k$ be an eigenfunction associated to $\lambda_k$ with $\|u_k\|_p = 1$. Then, thanks to Lemma~\ref{lema:aux1}, $\{u_k\}_{k \in \N}$ is bounded in $\widetilde W^{s,p}(\Omega)$ and therefore we can extract a subsequence (that we still denoted by $\{u_k\}_{k \in \N}$) such that 
  \begin{gather*}
    u_k\rightharpoonup u \text{ weakly in } \widetilde{W}^{s,p}(\Omega), \\
	u_k\to u \text{ in } L^{pq'}(\R^n), \\
	u_k\to u \text{ in } L^p(\R^n).
  \end{gather*}
  Observe that $u_k^p\to u^p $ in $L^{q^\prime}(\R^n)$ since $u_k\to u $ in $L^{pq^\prime}(\R^n)$. Then $\|u\|_p = 1$, and
  \begin{align*}
    \dfrac12[u]_{s,p}^p & \le \liminf_{k\to \infty}\dfrac12[u_k]_{s,p}^p \\
					&=\lim_{k\to \infty}\lambda_k \int_{\R^n} |u_k(x)|^p\, \di x - \int_{\Omega}V(x)|u_k|^p\, \di x\\
					&= \lambda(V) \int_{\R^n} |u(x)|^p\, \di x - \int_{\Omega}V(x)|u|^p\, \di x.
	\end{align*}
  Hence, $u$ is an eigenfunction associated to $\lambda(V)$. By Theorem~\ref{thm:autoval1}, we can assume that $u>0$.
	
  On the other hand, by the Egorov's theorem, for any $\varepsilon>0$ there exists a subset $U_\varepsilon$ of $\Omega$ such that $|U_\varepsilon|<\varepsilon$ and $u_k\to u>0$ uniformly in 
  $\Omega\setminus U_{\varepsilon}.$ This contradicts the previous lemma. Indeed,
  \[
    0 < \lim_{k\to\infty} \min\left\{A(\lambda_k)^{\tfrac{1}{(1-\tfrac{p}{r})}}, A(\lambda_k)^{\tfrac{1}{(\tfrac{1}{q^\prime}-\tfrac{p}{r})}}\right\} 		 	 \le\lim_{k\to\infty}|\{x\in\mathbb{R}^n\colon u_{k}<0\}|,
  \]
where $r\in(pq^\prime,p_s^*)$.
\end{proof}

%%%%%%%%%%%%%%%%%%%%%%%%%%%%%%%%%%%%%%%%%%%%%%%%%%%%%%%%%%%%%%%%%%%%%%%%%%%%%%%%%

\section{The  functional $\lambda(V)$}\label{functional}

In this section we shall provide some useful properties of the functional 
$$
\lambda\colon L^q(\Omega)\to \mathbb{R},\quad \max\{1,\tfrac{n}{sp}\} < q < \infty.
$$ 
that associate to every $V \in L^q(\Omega)$ the number $\lambda(V)$ given by \eqref{first-eigenvalue}.
	
From now on, $\Omega \subset \R^n$ denotes  
a bounded extension domain and $V$ is a function in $L^q(\Omega)$, with $\max\{1,\tfrac{n}{sp}\} < q < \infty$.	 
	
\begin{lem}\label{lem:propfunct}
  The functional $\lambda$ is concave  in $L^q(\Omega)$. Moreover, for any $M>0$ there exists a constant $C=C(s,p,q,M)$ such that
  \[
    \lambda(V) \le C
  \]
  for all $V\in L^q(\Omega)$ such that $\|V\|_{q;\Omega} \le M$. 
\end{lem}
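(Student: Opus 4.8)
The statement has two parts: concavity of $V \mapsto \lambda(V)$, and a uniform upper bound on balls of $L^q(\Omega)$. Let me sketch each.

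*Concavity.* The key observation is that $\lambda(V)$ is, by formula \eqref{first-eigenvalue}, an infimum over a fixed set (the unit sphere $\{u \in \widetilde W^{s,p}(\Omega) : \|u\|_p = 1\}$) of the family of functionals
$$
V \longmapsto J_u(V) := \tfrac12 [u]_{s,p}^p + \int_\Omega V(x)|u|^p\,\di x.
$$
For each fixed admissible $u$, the map $V \mapsto J_u(V)$ is \emph{affine} in $V$ (the seminorm term is constant, the potential term is linear), hence in particular concave. An infimum of an arbitrary family of concave (here affine) functions is concave; therefore $\lambda(V) = \inf_u J_u(V)$ is concave on $L^q(\Omega)$. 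I would spell this out: given $V_0, V_1 \in L^q(\Omega)$ and $t \in [0,1]$, for any admissible $u$ one has $J_u(tV_1 + (1-t)V_0) = t J_u(V_1) + (1-t) J_u(V_0) \ge t\lambda(V_1) + (1-t)\lambda(V_0)$; taking the infimum over $u$ on the left yields $\lambda(tV_1 + (1-t)V_0) \ge t\lambda(V_1) + (1-t)\lambda(V_0)$.

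*Uniform upper bound.* Here I would test the infimum in \eqref{first-eigenvalue} with a single fixed competitor independent of $V$: pick any $\phi \in C_c^\infty(\Omega)$ with $\|\phi\|_p = 1$ (such $\phi$ exists and lies in $\widetilde W^{s,p}(\Omega)$ by Theorem~\ref{thm:teouce}, modulo the exceptional exponent $s = 1/p$ — but $C_c^\infty(\Omega) \subset \widetilde W^{s,p}(\Omega)$ directly since the zero extension of a smooth compactly supported function is smooth and compactly supported). Then
$$
\lambda(V) \le \tfrac12 [\phi]_{s,p}^p + \int_\Omega V(x)|\phi|^p\,\di x \le \tfrac12 [\phi]_{s,p}^p + \|V\|_{q;\Omega}\,\big\||\phi|^p\big\|_{q';\Omega} \le \tfrac12[\phi]_{s,p}^p + M\,\|\phi\|_{pq';\Omega}^p,
$$
using Hölder's inequality and $\|V\|_{q;\Omega} \le M$. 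Since $\phi$ is fixed, the right-hand side is a constant $C = C(s,p,q,M)$ (it also depends on the chosen $\phi$, hence implicitly on $\Omega$ and $n$, but this is harmless). This proves $\lambda(V) \le C$ for all $V$ with $\|V\|_{q;\Omega} \le M$.

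*Main obstacle.* Neither part is genuinely hard; the only point requiring a little care is checking that the test-function competitor in the second part is legitimately in $\widetilde W^{s,p}(\Omega)$ and that the Hölder pairing $\||\phi|^p\|_{q'} = \|\phi\|_{pq'}^p$ is finite — but $\phi \in C_c^\infty$ makes all of this immediate, and finiteness of $\|\phi\|_{pq'}$ is also guaranteed abstractly by Remark~\ref{rem:hipenq}. So I expect the proof to be short: one paragraph for concavity via infimum-of-affine-functions, one paragraph for the bound via a single explicit competitor.
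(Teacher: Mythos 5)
Your proof is correct and essentially identical to the paper's: concavity via the infimum of affine functionals $V\mapsto J_u(V)$, and the uniform bound by testing with a single fixed $\phi\in C_c^\infty(\Omega)$ of unit $L^p$ norm and applying H\"older's inequality. Your aside that the constant also depends on the choice of $\phi$ (hence on $\Omega$ and $n$) is a fair observation about the paper's stated dependence $C=C(s,p,q,M)$, but it does not affect the substance.
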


\begin{proof}
  Given $V, W\in L^q(\Omega)$, we have by definition that
  \begin{gather*}
			\lambda(V) \le \dfrac12[u]_{s,p}^p + \int_{\Omega} V(x)|u|^p\, \di x, \\
			\lambda(W) \le \dfrac12[u]_{s,p}^p + 
			\int_{\Omega} W(x)|u|^p\, \di x,
  \end{gather*}
  for all $u \in \widetilde{W}^{s,p}(\Omega)$ with $\|u\|_p=1$. Then, for any $t\in(0,1)$
		and $V, W\in L^q(\Omega)$,
  \[
    t\lambda(V) + (1-t)\lambda(W) \le \dfrac12[u]_{s,p}^p + \int_{\Omega} (tV(x)+(1-t)W(x))|u|^p\, \di x \\
  \]
  for all  $u\in \widetilde{W}^{s,p}(\Omega)$ such that $\|u\|_p = 1$. After recalling the definition of the functional $\lambda$, we deduce then that
  \[
    t\lambda(V)+(1-t)\lambda(W) \le	\lambda(tV+(1-t)W),
  \]
  that is, $\lambda$ is concave.
		
  Let us now prove that  $\lambda$ is locally bounded in $L^q(\Omega)$.	Indeed, given $M>0$ and $V \in L^q(\Omega)$ with $\|V\|_{q;\Omega} \le M$, fix a function $\phi \in C_{c}^\infty(\Omega) \subset \widetilde W^{s,p}(\Omega)$ such that $\|\phi\|_p = 1$. Thus,
  \begin{align*}
    \lambda(V) &\le \dfrac12[\phi]_{s,p}^p + \int_{\Omega} V(x)|\phi|^p\, \di x \\
			   &\le \dfrac12[\phi]_{s,p}^p + \|V\|_{q;\Omega} \|\phi\|_{pq'}^p \\
			   &\le \dfrac12[\phi]_{s,p}^p + M \|\phi\|_{pq'}^p.
  \end{align*}
  \end{proof}
	
Our next aim is to show that $\lambda$ is continuous. We'll need the following estimate, related to that in Lemma~\ref{lema:aux1}. The only difference with Lemma \ref{lema:aux1} is the fact that here we need the constants to be uniform with respect to the potential function $V$.
  \begin{lem}\label{lem:caux}
    Given $M>0$, for any $\varepsilon > 0$ there is a constant $C_\varepsilon > 0$ such that
	\begin{equation}\label{eq:caux0}
	  \left|\int_{\Omega} V(x)|u|^p\, \di x \right| \le \varepsilon [u]_{s,p}^p + C_\varepsilon \|V\|_{q;\Omega}
			\|u\|_p^p
    \end{equation}
		for all $u\in \widetilde{W}^{s,p}(\Omega)$ and 
		$V\in L^q(\Omega)$ such
		that $\|V\|_{L^q(\Omega)}\le M.$
  \end{lem}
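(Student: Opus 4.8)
The plan is to argue by contradiction, following the scheme of Lemma~\ref{lema:aux1}, but now letting the potential move inside the ball $\{\,V\in L^q(\Omega)\colon \|V\|_{q;\Omega}\le M\,\}$ instead of keeping it fixed. Assume \eqref{eq:caux0} fails for some $\varepsilon_0>0$; then for every $k\in\N$ there are $V_k\in L^q(\Omega)$ with $\|V_k\|_{q;\Omega}\le M$ and $u_k\in\widetilde W^{s,p}(\Omega)$ such that
\[
  \left|\int_\Omega V_k(x)|u_k|^p\,\di x\right| > \varepsilon_0\,[u_k]_{s,p}^p + k\,\|V_k\|_{q;\Omega}\,\|u_k\|_p^p .
\]
Both sides are homogeneous of degree $p$ in $u_k$, and the inequality cannot hold for $u_k\equiv 0$, so after replacing $u_k$ by a scalar multiple we may assume $\|u_k\|_{pq'}=1$.

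Next I would extract uniform bounds. Since $W^{s,p}(\Omega)\hookrightarrow L^{pq'}(\Omega)$ by Remark~\ref{rem:hipenq}, Hölder's inequality gives
\[
  \left|\int_\Omega V_k(x)|u_k|^p\,\di x\right|\le \|V_k\|_{q;\Omega}\,\|u_k\|_{pq'}^p=\|V_k\|_{q;\Omega}\le M,
\]
hence $\varepsilon_0[u_k]_{s,p}^p + k\|V_k\|_{q;\Omega}\|u_k\|_p^p < M$, so $[u_k]_{s,p}^p< M/\varepsilon_0$ and $\|V_k\|_{q;\Omega}\|u_k\|_p^p< M/k\to 0$. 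As $\Omega$ is bounded, $\|u_k\|_p\le C_\Omega\|u_k\|_{pq'}=C_\Omega$, so $\{u_k\}$ is bounded in $\widetilde W^{s,p}(\Omega)$. By the compact embedding $W^{s,p}(\Omega)\hookrightarrow L^{pq'}(\Omega)$ (Theorem~\ref{thm:teoemb} and Remark~\ref{rem:hipenq}), along a subsequence $u_k\to u$ in $L^{pq'}(\Omega)$, hence in $L^p(\Omega)$ and, along a further subsequence, a.e.\ in $\R^n$; in particular $\|u\|_{pq'}=1$, so $u\not\equiv 0$ and $\|u\|_p>0$.

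Finally, the contradiction: from $\|V_k\|_{q;\Omega}\|u_k\|_p^p\to 0$ and $\|u_k\|_p^p\to\|u\|_p^p>0$ we get $\|V_k\|_{q;\Omega}\to 0$, whence $\bigl|\int_\Omega V_k|u_k|^p\bigr|\le\|V_k\|_{q;\Omega}\to 0$, and then the contradiction hypothesis forces $[u_k]_{s,p}\to 0$. Fatou's lemma applied to $(x,y)\mapsto |u_k(x)-u_k(y)|^p|x-y|^{-(n+sp)}$ along the a.e.\ convergent subsequence yields $[u]_{s,p}^p\le\liminf_k[u_k]_{s,p}^p=0$, so $u$ agrees a.e.\ with a constant on $\R^n$; since $u=0$ on $\R^n\setminus\Omega$ and $|\Omega|<\infty$, we conclude $u\equiv 0$, contradicting $\|u\|_{pq'}=1$. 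The only delicate point is this last dichotomy — ruling out that the extra freedom carried by the varying $V_k$ breaks the compactness argument — and it is settled precisely by the observation that $\|V_k\|_{q;\Omega}\|u_k\|_p^p\to 0$ together with $\|u\|_p>0$ forces $\|V_k\|_{q;\Omega}\to 0$; everything else is a verbatim repetition of Lemma~\ref{lema:aux1}. Alternatively one can avoid the contradiction altogether: interpolate the $L^{pq'}$-norm of $u$ between $\|u\|_p$ and an $L^r$-norm with $pq'<r\le p_s^*$ (strictly inside, by Remark~\ref{rem:hipenq}) controlled by $\|u\|_{s,p;\Omega}$ via Theorem~\ref{thm:teoembcont}, and then apply Young's inequality, which produces \eqref{eq:caux0} with an explicit $C_\varepsilon=C_\varepsilon(s,p,q,n,\Omega,M)$.
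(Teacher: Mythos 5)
Your contradiction argument is essentially the paper's own proof: the same normalization $\|u_k\|_{pq'}=1$, the same H\"older bound on the left-hand side, the same compact embedding to extract a limit $u$ with $\|u\|_{pq'}=1$, and the same conclusion that $[u_k]_{s,p}\to 0$ forces $u\equiv 0$. One small improvement in your write-up: you deduce $\|V_k\|_{q;\Omega}\to 0$ directly from $\|V_k\|_{q;\Omega}\|u_k\|_p^p\to 0$ together with $\|u_k\|_p\to\|u\|_p>0$, whereas the paper first extracts a weak limit $V$ of $V_k$, argues $V\equiv 0$, and then asserts $V_k\to 0$ in $L^q(\Omega)$; weak convergence alone would not give that norm decay, so your direct deduction is the cleaner way to justify that step. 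Your sketched alternative --- interpolating $\|u\|_{pq'}$ between $\|u\|_p$ and $\|u\|_r$ for some $r\in(pq',p_s^*)$, bounding $\|u\|_r$ by $[u]_{s,p}$ via the embedding, and then applying Young's inequality --- is a genuinely different and more economical route: it avoids both the contradiction and the compactness argument and produces an explicit $C_\varepsilon$. The only ingredient it needs beyond Theorem~\ref{thm:teoembcont} is the Poincar\'e-type bound $\|u\|_r\le C[u]_{s,p}$ for $u\in\widetilde{W}^{s,p}(\Omega)$, which the paper already uses (e.g.\ in the proof of Lemma~\ref{lema:nodal}).
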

  \begin{proof}
  Suppose by contradiction that for all $k \in \N$ there exist $\varepsilon_0 > 0$ and a sequence $\{(u_k,V_k)\}_{k \in \N} \subset \widetilde W^{s,p}(\Omega) \times L^q(\Omega)$ such that $\|u_k\|_{pq'} = 1$, $\|V_k\|_{q;\Omega} \le M$ and
  \begin{equation} \label{eq:caux}
    \left| \int_{\Omega}V_k(x)|u_k|^p\, \di x \right| \ge \varepsilon_0 [u_k]_{s,p}^p + k \|V_k\|_{q;\Omega} \|u_k\|_p^p \quad \text{for all } k \in \N.
  \end{equation}
  Then, by H\"older's inequality, we have that
  \begin{gather}
    \varepsilon_0 [u_k]_{s,p}^p + k \|V_k\|_{q;\Omega} \|u_k\|_p^p \le \|V_k\|_{q;\Omega} 			\|u_k\|_{pq'}^p \le M, \label{eq:caux1} \\
    \|u_k\|_{p;\Omega}\le\|u_k\|_{pq^\prime;\Omega} |\Omega|^{\frac{1}{pq}},
  \end{gather}
  for all $k \in \N.$ Therefore $\{(u_k,V_k)\}_{k\in\mathbb{N}}$ is bounded in $\widetilde{W}^{s,p}(\Omega) \times L^q(\Omega)$ and 
  \begin{equation}\label{eq:caux3}
    \lim_{k \to \infty} \|V_k\|_{q;\Omega} \|u_k\|_p^p = 0.
  \end{equation}
Thus, there exist a subsequence (still denoted by  $\{(u_k,V_k)\}_{k \in \N}$) and some $(u,V) \in \widetilde W^{s,p}(\Omega)\times L^q(\Omega)$, such that
  \begin{equation}\label{eq:caux4}
    \begin{gathered}
      V_k\rightharpoonup V \text{ weakly in } L^q(\Omega),\\
	  u_k\rightharpoonup u \text{ weakly in } \widetilde W^{s,p}(\Omega),\\
	  u_k\to u \text{ in } L^{pq^\prime}(\R^n).
    \end{gathered}
  \end{equation}
  This implies that $\|u\|_{pq'} = 1$, $\|V\|_{q;\Omega} \le M$ and
  \begin{gather*}
    |u_k|^p\to |u|^p  \text{ in } L^{q^\prime}(\R^n),\\
	u_k\to u  \text{ in } L^{p}(\R^n).
  \end{gather*}
  Using \eqref{eq:caux3}, we deduce that $\|V\|_{q;\Omega} \|u\|_p = 0$. As $\|u\|_{pq'} = 1$, then $V \equiv 0$.
  
  Therefore $V_k\to 0$ in  $L^q(\Omega)$. Using this and \eqref{eq:caux4} in \eqref{eq:caux}, we deduce that
  \[
    [u]_{s,p}^p \le \liminf_{k\to\infty}	[u_k]_{s,p}^p \le 0,
  \]
  which implies that $u\equiv0$. This contradiction completes the proof. 
  \end{proof}
	
  \begin{lem}\label{lema:cont}
  The functional $\lambda$ is continuous.
  \end{lem}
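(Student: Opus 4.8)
The plan is to obtain continuity of $\lambda$ as a free consequence of two facts that are already (essentially) in hand: that $\lambda$ is concave, and that $\lambda$ is locally bounded on $L^q(\Omega)$. Indeed, a classical result of convex analysis says that a convex function on a normed space which is bounded above on some ball is locally Lipschitz, hence continuous, on the whole space. Applying this to $-\lambda$, it suffices to check that $-\lambda$ is bounded above on balls of $L^q(\Omega)$, i.e. that $\lambda$ is bounded \emph{below} on $\{V\colon \|V\|_{q;\Omega}\le M\}$ for every $M>0$. Lemma~\ref{lem:propfunct} already provides the companion upper bound, but it is the lower bound that matters here.

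To produce this lower bound I would invoke the uniform estimate of Lemma~\ref{lem:caux}. Fix $M>0$ and $V\in L^q(\Omega)$ with $\|V\|_{q;\Omega}\le M$. For any $u\in\widetilde W^{s,p}(\Omega)$ with $\|u\|_p=1$, Lemma~\ref{lem:caux} with $\varepsilon=\tfrac14$ gives
\[
  \tfrac12[u]_{s,p}^p+\int_\Omega V(x)|u|^p\,\di x
  \ \ge\ \tfrac12[u]_{s,p}^p-\tfrac14[u]_{s,p}^p-C_{1/4}\|V\|_{q;\Omega}
  \ \ge\ -C_{1/4}M,
\]
where $C_{1/4}=C_{1/4}(s,p,q,M)$ is the constant from that lemma. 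Taking the infimum over all such $u$ yields $\lambda(V)\ge -C_{1/4}M$. Hence $\lambda$ is bounded below on every ball of $L^q(\Omega)$, so $-\lambda$ is convex and bounded above on every ball; the convexity result quoted above then shows that $-\lambda$, and therefore $\lambda$, is locally Lipschitz on $L^q(\Omega)$, in particular continuous. The only non-formal ingredient is this absorption estimate, which is precisely what Lemma~\ref{lem:caux} was designed to supply; the rest is a citation, so I do not expect a real obstacle.

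Should a self-contained argument be preferred, one can instead prove continuity directly. Upper semicontinuity is immediate: $\lambda$ is the infimum, over $\{u\in\widetilde W^{s,p}(\Omega)\colon\|u\|_p=1\}$, of the maps $V\mapsto \tfrac12[u]_{s,p}^p+\int_\Omega V|u|^p\,\di x$, each of which is affine and continuous on $L^q(\Omega)$ (by H\"older's inequality and $u\in L^{pq'}(\Omega)$), so the infimum is upper semicontinuous. For lower semicontinuity, given $V_k\to V$ in $L^q(\Omega)$ pick eigenfunctions $u_k$ realizing $\lambda(V_k)$ (Theorem~\ref{thm:aut.existencia}) with $\|u_k\|_p=1$; since $\{V_k\}$ is bounded in $L^q(\Omega)$, Lemma~\ref{lem:propfunct} bounds $\lambda(V_k)$ from above and Lemma~\ref{lem:caux} then bounds $[u_k]_{s,p}$, so along a subsequence $u_k\rightharpoonup u$ in $\widetilde W^{s,p}(\Omega)$ and $u_k\to u$ in $L^{pq'}(\R^n)$ (Remark~\ref{rem:hipenq}), whence $\|u\|_p=1$ and $|u_k|^p\to|u|^p$ in $L^{q'}(\R^n)$. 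Passing to the $\liminf$, using weak lower semicontinuity of $[\,\cdot\,]_{s,p}^p$ together with $\int_\Omega V_k|u_k|^p\,\di x\to\int_\Omega V|u|^p\,\di x$, gives $\lambda(V)\le\liminf_k\lambda(V_k)$. This route mirrors the compactness arguments already used (cf. the proof of Theorem~\ref{teo:isolated}), and the only step requiring some care is the passage to the limit in the potential term.
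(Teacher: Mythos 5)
Both of your arguments are correct, and they match the two roads explicitly laid out in the paper. Your primary route --- concavity of $\lambda$ (from Lemma~\ref{lem:propfunct}) plus local boundedness below, fed into the classical fact that a convex (here, $-\lambda$) real-valued function on a Banach space bounded above on a ball is locally Lipschitz --- is precisely the shortcut the paper mentions in the opening sentence of its proof and delegates to the reference \cite{R}. You correctly identify that what must be supplied beyond Lemma~\ref{lem:propfunct} (which only gives an \emph{upper} bound for $\lambda$) is the \emph{lower} bound, and your absorption computation with $\varepsilon=\tfrac14$ from Lemma~\ref{lem:caux} does exactly that; the arithmetic is right and taking the infimum over admissible $u$ gives $\lambda(V)\ge -C_{1/4}M$ on $\|V\|_{q;\Omega}\le M$. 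Your alternative self-contained argument (upper semicontinuity as an infimum of continuous affine functionals, lower semicontinuity by extracting eigenfunctions, bounding $[u_{k_j}]_{s,p}$ via Lemmas~\ref{lem:propfunct} and~\ref{lem:caux}, passing to weak limits and using compact embeddings) is essentially the paper's own proof, line for line.

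The one thing worth flagging is why the paper deliberately forgoes the shorter abstract route: the compactness/weak-convergence machinery built in the direct proof is reused immediately in Remark~\ref{rem:conv} (strong convergence of the normalized eigenfunctions $u_k\to u$ in $\widetilde W^{s,p}(\Omega)$), which in turn is the engine behind the differentiability result in Lemma~\ref{lem:derivable}. So if you adopt your primary route to dispatch Lemma~\ref{lema:cont} quickly, you will still need to run the compactness argument in full when you reach Remark~\ref{rem:conv}; nothing is really saved globally. Either way the proof of the lemma itself is sound.
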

  \begin{proof}
A proof of this result follows directly from the fact that any convex and locally bounded function in a Banach space is locally H\"older continuous (see \cite{R}). Nevertheless, we include here a direct proof of this fact since some of the arguments will be needed in the sequel.

  Let $V\in L^{q}(\Omega)$ and $\{V_k\}_{k \in \N}$ be a sequence in $L^q(\Omega)$ such that 
  \begin{equation}\label{eq:cont0}
	 V_k \to V \text{ in }  L^q(\Omega). 
  \end{equation} 
  Let us prove that $\lambda(V_k) \to \lambda(V)$ as $k \to \infty$.
		 
  Let $\{u_k\}_{k \in \N}$ be a sequence in $\widetilde{W}^{s,p}(\Omega)$ such that $\|u_k\|_p = 1$ and
  \[
    \lambda(V_k) = \dfrac12[u_k]_{s,p}^p + \int_{\Omega}V_k(x)|u_k|^p\, \di x \quad \text{for all } k \in \N.
  \]
  Then, for any $k \in \N$ and  $u\in\widetilde{W}^{s,p}(\Omega)$ such that $\|u\|_{L^p(\Omega)}=1$, 
  \[
    \lambda(V_k) \le \dfrac12[u]_{s,p}^p + \int_{\Omega} V_k(x)|u|^p\, \di x.
  \]
  Thus, using \eqref{eq:cont0}, we deduce that
  \[
    \limsup_{k\to\infty}\lambda(V_k) \le \dfrac12[u]_{s,p}^p + \int_{\Omega} V(x)|u|^p\, \di x
  \]
  for all $u \in \widetilde W^{s,p}(\Omega)$ with $\|u\|_p = 1$. Hence  
  \begin{equation}\label{eq:cont1}
    \limsup_{k\to\infty}\lambda(V_k)\le \lambda(V).
  \end{equation}
		
  Now, let us take a subsequence $\{V_{k_{j}}\}_{j \in \N}$ of $\{V_{k}\}_{k\in\mathbb{N}}$ so that
  \begin{equation}\label{eq:cont2}
    \lim_{j\to\infty}\lambda(V_{k_j}) = \liminf_{k \to \infty} \lambda(V_k).
  \end{equation}
  By \eqref{eq:cont0}, we can assume that for any $j \in \N$ we have that $\|V_{k_j}\|_{L^q(\Omega)}\le M$ for some suitable constant $M$. Then, by Lemmas \ref{lem:propfunct} and \ref{lem:caux}, there exist positive constants $C$ and $D$ independent of $j$ such that
  \begin{align*}
    C &\ge \lambda(V_{k_j})\\
	  &= \dfrac12
	  [u_{k_j}]_{s,p}^p + \int_{\Omega} V_{k_j}(x)|u_{k_j}|^p\, \di x \\
	  &\ge \dfrac12
	  [u_{k_j}]_{s,p}^p - \dfrac14[u_{k_j}]_{s,p}^p - D \|V_{k_j}\|_{q;\Omega} \|u_{k_j}\|_p^p.
  \end{align*}
  Therefore
  \[
    [u_{k_j}]_{s,p}^p \le 4(C+DM)
  \]
  for all $j \in \N$. Then, $\{u_{k_j}\}_{j \in \N}$ is bounded in $\widetilde{W}^{s,p}(\Omega)$ and there exist a subsequence (still denoted by $\{u_{k_j}\}_{j \in \N}$) and some $u \in \widetilde W^{s,p}(\Omega)$ such that
  \begin{gather*}
    u_{k_j}\rightharpoonup u \text{ weakly in } \widetilde{W}^{s,p}(\Omega),\\
    u_{k_j}\to u  \text{ strongly in } L^{p}(\R^n),\\
	u_{k_j}\to u \text{ strongly in } L^{pq'}(\R^n).
  \end{gather*}
  Thus $\|u\|_p=1$ and 
  \begin{gather*}
    u_{k_j}^p\to u^p  \text{ strongly  in } L^{q^\prime}(\R^n),\\
  \end{gather*}
  Now, using \eqref{eq:cont0} and \eqref{eq:cont2}, 
  we have that
  \begin{align*}
    \liminf_{k\to \infty}\lambda(V_k)
			&= \lim_{j\to \infty}\lambda(V_{k_j})
			= \lim_{j\to \infty} \dfrac12[u_{k_j}]_{s,p}^p + 
			\int_{\Omega} V_{k_j}(x)|u_{k_j}|^p\, \di x \\
		 	&\ge \dfrac12[u]_{s,p}^p + \int_{\Omega} V(x)|u|^p\, \di x
		 	\ge \lambda(V).
  \end{align*}
  This and \eqref{eq:cont1}, imply that
  \[
    \lim_{k\to \infty}\lambda(V_k) = \lambda(V);
  \]
  and the proof is complete.
  \end{proof}
	
  \begin{rem}\label{rem:conv}
    Let $V\in L^q(\Omega)$, and $\{V_k\}_{k \in \N}$ be a sequence in $L^q(\Omega)$ such that $V_k \to V$. Suppose that $\{u_k\}_{k \in \N} \subset \widetilde W^{s,p}(\Omega)$, is the sequence of the positive eigenfunctions associated to $\lambda(V_k)$ with $\|u_k\|_p = 1$. Then, proceeding as in the proof of the previous lemma, it is possible to extract a subsequence $\{u_{k_j}\}_{j \in \N}$ such that 
    \begin{gather*}
	  u_{k_j} \rightharpoonup u  \text{ weakly in } 
	  \widetilde W^{s,p}(\Omega), \\
	  u_{k_j} \to u  \text{ strongly in } L^{p}(\R^n),\\
	  u_{k_j} \to u  \text{ strongly in } L^{pq'}(\R^n). \\
	\end{gather*}
    Therefore
	\begin{align*}
		\lambda(V) = \lim_{j\to\infty}\lambda(V_{k_j}) &= 
		\lim_{j\to\infty} \dfrac12[u_{k_j}]_{s,p}^p + 
		\int_{\Omega} V_{k_j}(x)|u_{k_j}|^p \, \di x \\
		&\ge \dfrac12[u]_{s,p}^p + \int_{\Omega} V(x)|u|^p \, \di x \\
		&\ge \lambda(V).
    \end{align*}
	
	Then $u$ is the positive eigenfunction of $\lambda(V)$ normalized by $\|u\|_p = 1$; additionally $[u_{k_j}]_{s,p}^p \to [u]_{s,p}^p$. Thereby  $u_{k_j} \to u$ in	$\widetilde W^{s,p}(\Omega)$. In fact, proceeding as before, we observe that all subsequences of $\{u_k\}_{k\in\mathbb{N}}$ have a further subsequence that converges to $u$ in $\widetilde{W}^{s,p}(\Omega)$. From that, we conclude that $u_k\to u$  in $\widetilde{W}^{s,p}(\Omega).$
\end{rem}	
	
With the continuity of the functional $\lambda$ on hand, let us go further and prove a differentiability property. Recall that for $V \in L^q(\Omega)$ such that $\|V\|_{q;\Omega} =1$ the tangent space of $\partial B(0,1) = \{ V \in L^q(\Omega)\colon \|V\|_{q;\Omega} = 1\}$ at $V$ is  
\[
  T_V(\partial B(0,1))=\left\{ W \in L^q(\Omega) \colon \int_{\Omega} |V|^{q-2}V W\, \di x = 0\right\}.
\]
Given $W \in T_V(\partial B(0,1))$ and $\alpha\colon(-1,1)\to L^q(\Omega)$ a differentiable curve 	such that
\begin{gather*}
  \alpha(t) \in \partial B(0,1) \quad \text{for all } t\in(-1,1), \\
  \quad \alpha(0) = V \quad \text{ and } \quad \alpha'(0) = W,
\end{gather*}
we define $\tilde \lambda\colon (-1,1)\to \mathbb{R}$ by $\tilde \lambda(t)\coloneqq  \lambda(V_t),$ where $V_t=\alpha(t)$. By the previous lemma $\tilde \lambda$ is continuous. Moreover:
	
\begin{lem}\label{lem:derivable}
  $\tilde \lambda$ is differentiable at $t=0$ and
  \[
    \tilde \lambda'(0)=\int_{\Omega}W(x)|u|^p\, \di x,
  \]
  where $u$ is the positive eigenfunction associated to $\lambda(V)$ normalized by $\|u\|_p = 1$.
\end{lem}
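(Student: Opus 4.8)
The plan is to use the variational characterization \eqref{first-eigenvalue} of $\lambda$ as an infimum, together with the convergence of the associated eigenfunctions furnished by Remark~\ref{rem:conv}. Write $V_t \coloneqq \alpha(t)$ and let $u_t \in \widetilde W^{s,p}(\Omega)$ be the positive eigenfunction associated to $\lambda(V_t)$ normalized by $\|u_t\|_p = 1$; by Remark~\ref{remark:unicidad} this is well defined and $u_0 = u$. Since $\alpha$ is differentiable at $0$ with $\alpha'(0) = W$, we have
\[
  \|V_t - V - tW\|_{q;\Omega} = o(|t|) \quad \text{as } t \to 0.
\]
Observe also that $\|V_t\|_{q;\Omega} = 1$ for every $t$, so all the potentials $V_t$ remain in a fixed ball of $L^q(\Omega)$, and that $u \in \widetilde W^{s,p}(\Omega) \subset L^{pq'}(\Omega)$ by Remark~\ref{rem:hipenq}, hence $|u|^p \in L^{q'}(\Omega)$.

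First I would establish the upper bound: using the fixed function $u$ as a competitor in the infimum defining $\lambda(V_t)$,
\[
  \tilde\lambda(t) \le \dfrac12 [u]_{s,p}^p + \int_\Omega V_t |u|^p\, \di x = \tilde\lambda(0) + \int_\Omega (V_t - V)|u|^p\, \di x,
\]
and, writing $V_t - V = tW + (V_t - V - tW)$ and applying Hölder's inequality to control the last term by $\|V_t - V - tW\|_{q;\Omega}\|u\|_{pq'}^p = o(|t|)$, one gets $\tilde\lambda(t) - \tilde\lambda(0) \le t\int_\Omega W |u|^p\, \di x + o(|t|)$. For the lower bound, using $u_t$ as a competitor in the infimum defining $\lambda(V)$,
\[
  \tilde\lambda(0) \le \dfrac12 [u_t]_{s,p}^p + \int_\Omega V |u_t|^p\, \di x = \tilde\lambda(t) + \int_\Omega (V - V_t)|u_t|^p\, \di x,
\]
so that, again by Hölder,
\[
  \tilde\lambda(t) - \tilde\lambda(0) \ge t\int_\Omega W |u_t|^p\, \di x - \|V_t - V - tW\|_{q;\Omega}\,\|u_t\|_{pq'}^p.
\]

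To conclude I would invoke Remark~\ref{rem:conv}: since $V_t \to V$ in $L^q(\Omega)$ as $t \to 0$, the corresponding positive eigenfunctions satisfy $u_t \to u$ in $\widetilde W^{s,p}(\Omega)$, hence $u_t \to u$ in $L^{pq'}(\Omega)$. In particular $\{\|u_t\|_{pq'}\}$ is bounded, so the remainder in the lower bound is $o(|t|)$, and $|u_t|^p \to |u|^p$ in $L^{q'}(\Omega)$, whence $\int_\Omega W|u_t|^p\, \di x \to \int_\Omega W|u|^p\, \di x$ because $W \in L^q(\Omega)$. Dividing the upper and lower estimates by $t$ — keeping track of the sign of $t$ — and letting $t \to 0$ shows that both one-sided difference quotients of $\tilde\lambda$ at $0$ converge to $\int_\Omega W|u|^p\, \di x$, which is the claimed derivative. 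The main obstacle is the lower bound, where one cannot use a fixed competitor: it is essential that the minimizers $u_t$ actually converge to $u$, and this is exactly the content of Remark~\ref{rem:conv}, which itself rests on the uniform-in-$V$ estimates of Lemmas~\ref{lem:propfunct} and~\ref{lem:caux} valid on bounded subsets of $L^q(\Omega)$.
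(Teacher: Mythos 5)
Your proposal is correct and follows essentially the same route as the paper: a fixed competitor $u$ in the infimum gives one side of the difference quotient, while the moving minimizers $u_t$ combined with the convergence $u_t\to u$ from Remark~\ref{rem:conv} give the other side, and the identification $\frac{V_t-V}{t}\to W$ in $L^q(\Omega)$ handles the linearization of the potential term. The only cosmetic difference is that you package the remainder explicitly as $o(|t|)$ while the paper passes directly to $\liminf$/$\limsup$ along sequences, but the underlying two-sided argument and the key use of Remark~\ref{rem:conv} are identical.
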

\begin{proof}
  We begin the proof by observing that
  \[
	\tilde \lambda(t) - \tilde \lambda(0) =  \lambda(\alpha(t)) - \lambda(V) \le \int_{\Omega} (V_t(x)-V(x))|u|^p\, \di x
  \]
  then
  \begin{equation}\label{eq:der1}
    \begin{gathered}
				\limsup_{t \to 0^+} \dfrac{\tilde \lambda(t) - \tilde \lambda(0)}{t} \le \int_{\Omega} W(x)|u|^p\, \di x,\\
				\liminf_{t \to 0^-} \dfrac{\tilde \lambda(t) - \tilde \lambda(0)}{t} \ge \int_{\Omega} W(x)|u|^p\, \di x.		
    \end{gathered}
  \end{equation}
 
  Let $\{t_k\}_{k\in\mathbb{N}}$ be a sequence in $(0,1)$ such that $t_k\to 0^+$ and
  \[
    \lim_{k\to \infty}\dfrac{\tilde \lambda(t_k) - \tilde \lambda(0)}t = \liminf_{t\to0^+}\dfrac{\tilde \lambda(t) - \tilde \lambda(0)}t
  \]
  Since $\tilde \lambda(t_k) \to \tilde \lambda(0),$ by Remark \ref{rem:conv}, we have that
  \[
    u_k\to u \text{ in } \widetilde{W}^{s,p}(\Omega),
  \]
  where $u_k$ and $u$ are the positive normalized eigenfunctions associated to $\lambda(V_{t_k})$ and $\lambda(V),$ respectively. Then
  \begin{equation}\label{eq:der2}
    \begin{aligned}
	  \liminf_{t \to 0^+}\dfrac{\tilde \lambda(t) - \tilde \lambda(0)}{t}
				&=\lim_{k\to \infty}\dfrac{\tilde \lambda(t_k) - \tilde \lambda(0)}{t} \\
				&\ge \lim_{k\to \infty}\int_{\Omega} \dfrac{(V_{t_k}(x)-V(x))}{t_{k}} |u_{t_k}|^p \, \di x \\
				&=\int_{\Omega} W(x)|u|^p\, \di x.
	 \end{aligned}
  \end{equation}
  Similarly, we can see that
  \begin{equation}\label{eq:der3}
    \limsup_{t\to0^{-}}\dfrac{\tilde \lambda(t) - \tilde \lambda(0)}{t} \le \int_{\Omega} W(x)|u|^p\, \di x.
  \end{equation}
		
  Putting together \eqref{eq:der1}, \eqref{eq:der2} and \eqref{eq:der3}, we conclude that
  \[
    \lim_{t\to0}\dfrac{\tilde \lambda(t) - \tilde \lambda(0)}t = \int_{\Omega} W(x)|u|^p\, \di x,
  \]
  as we wanted to show.
  \end{proof}
	
%%%%%%%%%%%%%%%%%%%%%%%%%%%%%%%%%%%%%%%%%%%%%%%%%%%%%%%%%%%%%%%%%%%%%%%%%%%%%%%%%

\section{The Optimization problems}\label{optimizacion}

In this section we prove the existence and characterizations of optimal potentials for the first eigenvalue of \eqref{eq:autovalores}. As in the previous section, $\Omega \subset \R^n$ denotes  a bounded extension domain and $V$ is a function in $L^q(\Omega)$, with $q \in (1,\infty) \cap (\tfrac{n}{sp},\infty)$.

Let us begin with the optimization problem when the potential function $V$ is restricted to a bounded closed convex subset of $L^q(\Omega)$.

\begin{thm}\label{thm:maximo}
  Let $\mathcal{C}$ be a bounded closed convex subset of $L^q(\Omega)$. Then there exist a unique $V^* \in \C$ such that
  \[
    \lambda(V^*) = \max \{ \lambda (V) \colon V \in \C \}
  \]
  and $V_* \in \C$ (not necessarily unique) such that 
  \begin{equation}\label{eq:elmin}
  \lambda(V_*) = \min \{ \lambda(V) \colon V \in \C \}.
  \end{equation}
\end{thm}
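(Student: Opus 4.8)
The plan is to run the direct method in the \emph{weak} topology of $L^q(\Omega)$. Since $q\in(1,\infty)$, the space $L^q(\Omega)$ is reflexive, and $\C$ — being bounded, convex and norm-closed — is weakly sequentially compact (convexity plus norm-closedness gives weak closedness by Mazur's lemma, while bounded sequences in a reflexive space have weakly convergent subsequences). By Lemma~\ref{lem:propfunct} the numbers $\lambda(V)$, $V\in\C$, are bounded above in terms of $s,p,q$ and $\sup_{V\in\C}\|V\|_{q;\Omega}$, and combining $\lambda(V)=\tfrac12[u]_{s,p}^p+\int_\Omega V|u|^p\,\di x$ for the (normalized) eigenfunction with the estimate of Lemma~\ref{lem:caux} (used with $\varepsilon=\tfrac14$) bounds them below as well, so $\sup_\C\lambda$ and $\inf_\C\lambda$ are finite. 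For the maximizer only \emph{weak upper semicontinuity} of $\lambda$ is needed, and this is essentially free from \eqref{first-eigenvalue}: for each fixed $w\in\widetilde W^{s,p}(\Omega)$ with $\|w\|_p=1$ one has $|w|^p\in L^{q'}(\Omega)$ (Remark~\ref{rem:hipenq}), so $V\mapsto\tfrac12[w]_{s,p}^p+\int_\Omega V|w|^p\,\di x$ is a continuous affine, hence weakly continuous, functional on $L^q(\Omega)$; as $\lambda$ is the pointwise infimum of this family it is weakly (sequentially) upper semicontinuous. Taking a maximizing sequence $V_k\in\C$ and a weakly convergent subsequence $V_k\rightharpoonup V^*\in\C$, upper semicontinuity gives $\lambda(V^*)\ge\limsup_k\lambda(V_k)=\sup_\C\lambda$, so $V^*$ is a maximizer.

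For the minimizer I expect the main obstacle, since an infimum of continuous functionals is in general only upper semicontinuous: one has to establish the \emph{weak lower semicontinuity} of $\lambda$ by hand. Let $V_k\rightharpoonup V$ with $\sup_k\|V_k\|_{q;\Omega}<\infty$, and let $u_k\in\widetilde W^{s,p}(\Omega)$ be the positive eigenfunction of $\lambda(V_k)$ with $\|u_k\|_p=1$. Since $\lambda(V_k)$ is bounded and $\lambda(V_k)=\tfrac12[u_k]_{s,p}^p+\int_\Omega V_k|u_k|^p\,\di x$, Lemma~\ref{lem:caux} (with $\varepsilon=\tfrac14$) makes $\{u_k\}$ bounded in $\widetilde W^{s,p}(\Omega)$. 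Along a subsequence realizing $\liminf_k\lambda(V_k)$ I extract a further subsequence with $u_k\rightharpoonup u$ weakly in $\widetilde W^{s,p}(\Omega)$ and, by the compact embeddings (Theorem~\ref{thm:teoemb}, Remark~\ref{rem:hipenq}), $u_k\to u$ strongly in $L^p(\R^n)$ and in $L^{pq'}(\R^n)$; hence $\|u\|_p=1$, $|u_k|^p\to|u|^p$ in $L^{q'}(\R^n)$, and, the cross term being a product of a weakly and a strongly convergent sequence, $\int_\Omega V_k|u_k|^p\,\di x\to\int_\Omega V|u|^p\,\di x$. Using $[u]_{s,p}^p\le\liminf_k[u_k]_{s,p}^p$ I then get
\[
\lambda(V)\le\tfrac12[u]_{s,p}^p+\int_\Omega V|u|^p\,\di x\le\liminf_k\Big(\tfrac12[u_k]_{s,p}^p+\int_\Omega V_k|u_k|^p\,\di x\Big)=\liminf_k\lambda(V_k),
\]
which is the desired lower semicontinuity (and, with the previous paragraph, shows $\lambda$ is weakly sequentially continuous on bounded sets). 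The direct method applied to a minimizing sequence in $\C$ now produces $V_*\in\C$ with $\lambda(V_*)=\min_\C\lambda$.

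It remains to prove uniqueness of the maximizer. If $V_1,V_2\in\C$ both attain $\Lambda:=\max_\C\lambda$, set $V_m:=\tfrac12(V_1+V_2)\in\C$; concavity (Lemma~\ref{lem:propfunct}) gives $\lambda(V_m)\ge\tfrac12\lambda(V_1)+\tfrac12\lambda(V_2)=\Lambda$, and maximality forces $\lambda(V_m)=\Lambda$. Let $u_m\in\widetilde W^{s,p}(\Omega)$ be the positive eigenfunction of $\lambda(V_m)=\Lambda$ with $\|u_m\|_p=1$ (Theorem~\ref{thm:aut.existencia}, Remark~\ref{remark:unicidad}). From
\[
\Lambda=\lambda(V_m)=\tfrac12\Big(\tfrac12[u_m]_{s,p}^p+\int_\Omega V_1|u_m|^p\,\di x\Big)+\tfrac12\Big(\tfrac12[u_m]_{s,p}^p+\int_\Omega V_2|u_m|^p\,\di x\Big)
\]
and the fact that each bracket is $\ge\lambda(V_i)=\Lambda$ by \eqref{first-eigenvalue} (as $\|u_m\|_p=1$), I conclude that each bracket equals $\Lambda$; thus $u_m$ minimizes the variational problem defining both $\lambda(V_1)$ and $\lambda(V_2)$, and hence (arguing as in Theorem~\ref{thm:aut.existencia}) is a weak eigenfunction of \eqref{eq:autovalores} with potential $V_i$ and eigenvalue $\Lambda$, $i=1,2$. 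Subtracting the two weak formulations \eqref{eq:plantdebil}, the $\h(u_m,\cdot)$ terms and the right-hand sides cancel (the eigenvalues coincide), leaving $\int_\Omega(V_1-V_2)|u_m|^{p-2}u_m\,v\,\di x=0$ for all $v\in\widetilde W^{s,p}(\Omega)$; testing with $v\in C_c^\infty(\Omega)$ gives $(V_1-V_2)\,u_m^{p-1}=0$ a.e.\ in $\Omega$, and since $u_m>0$ a.e.\ in $\Omega$ (Lemma~\ref{lema:signo-constante}) we get $V_1=V_2$ a.e.\ in $\Omega$. The delicate point of the whole argument is the weak lower semicontinuity step, where it is precisely the compactness of the fractional Sobolev embeddings that makes the cross term $\int_\Omega V_k|u_k|^p$ pass to the limit under merely weak convergence of the potentials.
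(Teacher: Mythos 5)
Your proof is correct and follows essentially the same strategy as the paper: weak sequential compactness of $\C$, weak upper semicontinuity of $\lambda$ as an infimum of weakly continuous affine functionals for the maximizer, a compactness argument for the eigenfunctions to get weak lower semicontinuity for the minimizer, and concavity plus subtraction of the weak formulations for uniqueness. The only (cosmetic) difference is in the uniqueness step, where you observe directly that each Rayleigh bracket must equal $\Lambda$ by an averaging argument, whereas the paper argues by contradiction using the uniqueness of the normalized positive minimizer; both lead to the same cancellation and the conclusion $V_1=V_2$ via $u_m>0$.
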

\begin{proof}
  First we show that there is a unique $V^* \in \C$ such that
  \[
    \lambda(V^*) = \max \{ \lambda (V) \colon V \in \C \}.
  \] 
    Let $\{V_k\}_{k \in \N}\subset \C$ be such that
  \[
    \lim_{k \to \infty} \lambda(V_k) = \sup \{ \lambda(V) \colon V \in \C \}.
  \]
	
  Since $\mathcal{C}$ is bounded, there exist a subsequence (still denoted by $\{V_k\}_{k \in \N}$) and $V^* \in L^q(\Omega)$ such that
  \begin{equation}\label{eq:auxmax1}
	 V_k\rightharpoonup V^* \text{ weakly in } 	L^q(\Omega).
  \end{equation} 
In fact, since $\mathcal{C}$ is closed convex subset of $L^q(\Omega)$ it follows that $\C$ is weakly closed and so $V^*\in\mathcal{C}$. Then
  \begin{equation}\label{eq:auxmax2}
	 \lambda(V^*) \le \sup \{ \lambda(V) \colon V \in \C \}.
  \end{equation} 
		
  On the other hand, for any $\varepsilon>0$ there exists $u\in\widetilde{W}^{s,p}(\Omega)$ such that
  \[
    \lambda(V^*) + \varepsilon \ge 
    \dfrac12[u]_{s,p}^p + \int_{\Omega} V^*(x) |u|^p\, \di x.
  \]
  Then, using that $|u|^p\in L^{q'}(\Omega)$ (since $q > \tfrac{n}{sp}$) and \eqref{eq:auxmax1}, we deduce that
  \begin{align*}
    \lambda(V^*) + \varepsilon &\ge \dfrac12
    [u]_{s,p}^p + \int_{\Omega} V^*(x) |u|^p\, \di x \\
			&= \dfrac12[u]_{s,p}^p + \lim_{k \to \infty} \int_{\Omega} V_k(x) |u|^p\, \di x \\
			&\ge \lim_{k \to \infty} \lambda(V_k) \\
			&= \sup \{ \lambda(V) \colon V \in \C \}.
  \end{align*}
  Therefore, 
  \begin{equation}\label{eq:auxmax3}
    \lambda(V^*) \ge \sup \{ \lambda(V) \colon V \in \C \}.
  \end{equation} 
  The previous equation and \eqref{eq:auxmax2} imply
  \[
    \lambda(V^*) = \max \{ \lambda(V) \colon V \in \C \}.
  \]
		
  Suppose now that there exist $V_1,V_2 \in \C$ such that 
  \begin{equation}\label{eq:auxmax4}
    \lambda(V_1) = \lambda(V_2) = \max \{ \lambda(V) \colon V \in \C \}.
  \end{equation} 
  Since $\C$ is convex, we have that $V_3 = \dfrac{V_1+V_2}2 \in \C$. Moreover, since $\lambda$ is concave and \eqref{eq:auxmax4},
  \[
    \lambda (V_3) \ge \dfrac{\lambda(V_1) + \lambda(V_2)}{2} = \max \{ \lambda(V) \colon V \in \C \}
  \]
  Then
  \begin{equation}\label{eq:auxmax5}
    \lambda \left( \dfrac{V_1 + V_2}{2} \right) = \lambda(V_1) = \lambda(V_2) = \max \{ \lambda(V) \colon V \in \C \}.
  \end{equation}
		
  On the other hand, by Remark \ref{remark:unicidad}, there exist $u_1,u_2,u_3\in\widetilde{W}^{s,p}(\Omega)$ such that $u_i$ is the unique positive eigenfunction associated to $\lambda(V_i)$ normalized by $\|u_i\|_p = 1$, $i=1,2,3$. We claim that $u_1=u_2=u_3$. Suppose by contradiction that $u_1 \neq u_3$ or $u_2\neq u_3$. Then
  \begin{align*}
    \lambda(V_3) &= [u_3]_{s,p}^p + \int_{\Omega} \dfrac{V_1(x)+V_2(x)}2 |u_3|^p\, \di x \\
			&= \dfrac12 \left( \dfrac12[u_3]_{s,p}^p + 
			\int_{\Omega} V_1(x) |u_3|^p\, \di x +\dfrac12
			[u_3]_{s,p}^p + \int_{\Omega} V_2(x) |u_3|^p\, \di x \right) \\
			&> \dfrac{\lambda(V_1) + \lambda(V_2)}{2} \\
			&= \max \{ \lambda(V) \colon V \in \C \},
  \end{align*}
  which contradicts \eqref{eq:auxmax5}.
		
  Therefore,
  \[
    \h (u_1,v) + \int_{\Omega} V_1(x)|u_1|^{p-2}u_1 v\, \di x = \h (u_1,v) + \int_{\Omega} V_2(x)|u_1|^{p-2}u_1 v\, \di x
  \]
  for all $v \in \widetilde W^{s,p}(\Omega)$, that is
  \[
    \int_{\Omega}(V_1(x)-V_2(x))|u_1|^{p-2}u_1 v\, \di x = 0
  \]
  for all $v\in\widetilde{W}^{s,p}(\Omega)$. Then $V_1=V_2$ a.e. in $\Omega$.
  
  Finally we show that there is $V_* \in \C$ such that
  \[
    \lambda(V_*) = \min \{ \lambda (V) \colon V \in \C \}.
  \]
   Let $\{V_k\}_{k \in \N}\subset \C$ be such that
  \[
    \lim_{k \to \infty} \lambda(V_k) = \inf 
    \{ \lambda(V) \colon V \in \C \}.
  \]
  As before, we have that there 
  exist a subsequence (still denoted by $\{V_k\}_{k \in \N}$) and 
  $V_* \in \C$ such that
  \begin{equation}\label{eq:auxmax6}
	 V_k\rightharpoonup V_* \text{ weakly in } 	L^q(\Omega).
  \end{equation}
  Then
  \begin{equation}\label{eq:auxmax7}
	\lambda(V_*) \ge \inf \{ \lambda(V) \colon V \in \C \}.
  \end{equation}
  Let $\{u_k\}_{k\in\N}\subset\widetilde W^{s,p}(\Omega)$ be such that
  $\|u_k\|_p=1$ and
  \[
  		\lambda(V_k)=\dfrac12[u_k]_{s,p}^p+\int_{\Omega}V_k(x)|u_k|^p\di x.
  \]
  Then, by \eqref{eq:auxmax7} and Lemma \ref{lem:caux}, 
  there exist positive constants $C$ and $D$ independent of $k$ such that
  \begin{align*}
    	C &\ge \lambda(V_{k})= \dfrac12
	  [u_{k}]_{s,p}^p + \int_{\Omega} V_{k}(x)|u_{k}|^p\, \di x \\
	  &\ge \dfrac12
	  [u_{k}]_{s,p}^p - \dfrac14[u_{k}]_{s,p}^p - D 
	  \|V_{k}\|_{q;\Omega} \|u_{k}\|_p^p.
  \end{align*}
  Therefore
  \[
    [u_{k}]_{s,p}^p \le 4(C+D\sup\{\|V\|_{q;\Omega}\colon
    V\in\C\})
  \]
  for all $k \in \N$. Then, $\{u_{k}\}_{k \in \N}$ 
  is bounded in $\widetilde{W}^{s,p}(\Omega)$ and there exist a 
  subsequence (still denoted by $\{u_{k}\}_{k\in \N}$) and  
  $u \in \widetilde W^{s,p}(\Omega)$ such that
  \begin{gather*}
    	u_{k}\rightharpoonup u 
    	\text{ weakly in } \widetilde{W}^{s,p}(\Omega),\\
    	u_{k}\to u \text{ strongly in } L^{p}(\R^n),\\
		u_{k}\to u \text{ strongly in } L^{pq'}(\R^n).
  \end{gather*}
  Then, $\|u\|_{p}=1$ and, using \eqref{eq:auxmax6}, we have that
  \begin{align*}
		\lambda(V_*) &\ge \inf \{ \lambda(V) \colon V \in \C \}
		=\lim_{k \to \infty} \lambda(V_k) =\lim_{k \to \infty} \dfrac12
	  [u_{k}]_{s,p}^p + \int_{\Omega} V_{k}(x)|u_{k}|^p\, \di x \\
	  &\ge \dfrac12
	  [u]_{s,p}^p + \int_{\Omega} V(x)|u|^p\, \di x \ge \lambda(V_*).
\end{align*}  
\end{proof}

The next result is a characterization of the minimal potential $V_*$.

\begin{lem}\label{lema:minimizante}
  Let $u_*$ be the positive eigenfunction associated to $\lambda(V_*)$ such that $\| u_* \|_p = 1$. Then $V_*$ is the unique minimizer of the linear operator
  \[
    L(V) \coloneqq \int_{\Omega} V(x) |u_*|^p \, \di x
  \]
		relative to $V\in\mathcal{C}.$  
\end{lem}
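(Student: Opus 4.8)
Write $m \coloneqq \min\{\lambda(V) : V \in \mathcal{C}\} = \lambda(V_*)$ and let $u_*$ be the positive eigenfunction associated to $\lambda(V_*)$ with $\|u_*\|_p = 1$, as in Remark~\ref{remark:unicidad}. The key observation is that for \emph{any} $V \in \mathcal{C}$, using $u_*$ as a competitor in the variational characterization \eqref{first-eigenvalue} of $\lambda(V)$,
\[
  \lambda(V) \le \tfrac12 [u_*]_{s,p}^p + \int_\Omega V(x)|u_*|^p\,\di x = \tfrac12 [u_*]_{s,p}^p + L(V),
\]
whereas for $V = V_*$ equality holds because $u_*$ is the eigenfunction realizing the infimum:
\[
  \lambda(V_*) = \tfrac12 [u_*]_{s,p}^p + L(V_*).
\]
Subtracting, for every $V \in \mathcal{C}$ we get $L(V) \ge \lambda(V) - \tfrac12[u_*]_{s,p}^p \ge m - \tfrac12[u_*]_{s,p}^p = L(V_*)$, using $\lambda(V) \ge m$ by the definition of the minimum. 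Hence $V_*$ minimizes $L$ over $\mathcal{C}$.

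For uniqueness, suppose $\widetilde V \in \mathcal{C}$ also satisfies $L(\widetilde V) = L(V_*) = \min_{V \in \mathcal{C}} L(V)$. Then the chain of inequalities above forces $\lambda(\widetilde V) = \tfrac12[u_*]_{s,p}^p + L(\widetilde V) = \tfrac12[u_*]_{s,p}^p + L(V_*) = \lambda(V_*) = m$, so $\widetilde V$ is itself a minimizer of $\lambda$ over $\mathcal{C}$, and moreover the competitor inequality \eqref{first-eigenvalue} for $\lambda(\widetilde V)$ is saturated by $u_*$. By the uniqueness of the minimizer in \eqref{first-eigenvalue} (Remark~\ref{remark:unicidad}, applied to the potential $\widetilde V$), $u_*$ must be \emph{the} positive normalized eigenfunction associated to $\lambda(\widetilde V)$. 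Consequently $u_*$ solves the weak formulation \eqref{eq:plantdebil} with potential $\widetilde V$ and eigenvalue $m$, and also with potential $V_*$ and eigenvalue $m$. Subtracting these two weak identities gives
\[
  \int_\Omega (\widetilde V(x) - V_*(x))\, u_*^{p-1}\, v\, \di x = 0 \qquad \text{for all } v \in \widetilde W^{s,p}(\Omega).
\]
Since $u_* > 0$ a.e.\ in $\Omega$ (Lemma~\ref{lema:signo-constante}), choosing $v$ to range over a dense family shows $\widetilde V = V_*$ a.e.\ in $\Omega$.

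**Main obstacle.** There is no serious obstacle; the argument is essentially a one-line duality trick. The only point requiring mild care is justifying that $u_*$ being a saturating competitor in \eqref{first-eigenvalue} for the potential $\widetilde V$ actually makes it an eigenfunction — this is exactly what Theorem~\ref{thm:aut.existencia} together with the simplicity statement in Remark~\ref{remark:unicidad} provide, since the minimizer of the Rayleigh-type quotient is automatically a weak solution of \eqref{eq:autovalores}. One should also note that $u_*^{p-1} v \in L^1(\Omega)$ for $v \in \widetilde W^{s,p}(\Omega)$, which follows from $u_* \in L^\infty(\Omega)$ (Lemma~\ref{lema:cotainfty}, as used in the proof of Theorem~\ref{thm:autoval1}), so the subtraction of weak formulations is legitimate and the test functions $v \in C_c^\infty(\Omega)$ suffice to conclude $\widetilde V = V_*$.
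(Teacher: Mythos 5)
Your proof is correct and follows essentially the same route as the paper: use $u_*$ as a competitor in the Rayleigh quotient \eqref{first-eigenvalue} for an arbitrary $V\in\mathcal{C}$ together with $\lambda(V_*)\le\lambda(V)$ to show $V_*$ minimizes $L$, and for uniqueness note that a tied minimizer $\widetilde V$ forces the quotient for $\lambda(\widetilde V)$ to be saturated by $u_*$, hence $u_*$ is an eigenfunction for $\widetilde V$ too; subtracting the two weak formulations and using $u_*>0$ a.e.\ in $\Omega$ then yields $\widetilde V=V_*$. The only cosmetic difference is that you make the intermediate inequalities and the invocation of Theorem~\ref{thm:aut.existencia}/Remark~\ref{remark:unicidad} explicit where the paper leaves them implicit.
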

\begin{proof}
  We first prove that $V_*$ is a minimizer. By \eqref{eq:elmin}, we have that
  \[
    \dfrac12[u_*]_{s,p}^p + \int_{\Omega} V_*(x) |u_*|^p\, \di x 
    = \lambda(V_*) 
			\le \lambda(V) 
			\le \dfrac12[u_*]_{s,p}^p + \int_{\Omega}V(x)|u_*|^p\, \di x 
  \]
  for all $V \in \C$. Therefore
  \[
    \int_{\Omega} V_*(x) |u_*|^p\, \di x \le \int_{\Omega} V(x)|u_*|^p\, \di x \quad \text{for all } V \in \C. 
  \]
		
  To prove the uniqueness, let $W \in \C$ such that 
  \[
    \int_{\Omega} W(x)|u_*|^p\, \di x = \min \left \{ L(V) \colon V \in \C \right \} = \int_{\Omega} V_*(x) |u_*|^p\, \di x.
  \]
  Then
  \[
    \lambda(V_*)
    = \dfrac12[u_*]_{s,p}^p + \int_{\Omega} V_*(x) |u_*|^p\, \di x
    = \dfrac12[u_*]_{s,p}^p + \int_{\Omega} W(x) |u_*|^p\, \di x 
    \ge \lambda(W).
  \]  
  Thus, by \eqref{eq:elmin}, $\lambda(V_*) = \lambda(W)$ and therefore $u_*$ is an eigenfunction associated to $\lambda(W)$. Then
  \[
    \int_{\Omega}(V_*(x)-W(x))|u_*|^{p-2} u_* v \, \di x = 0 
  \]
  for all $v \in \widetilde W^{s,p}(\Omega)$. Since $u_*>0$ in $\Omega$, we conclude that $V_* = W$ a.e. in $\Omega$.
	\end{proof}

\subsection{Optimization problems in a closed ball}

Let us now consider the case $\mathcal{C} = \bar B(0,1) \coloneqq \{ V \in L^q(\Omega) \colon \|V\|_{q;\Omega} \le 1 \}$, the unit closed ball in $L^q(\Omega)$. In this setting further characterizations of the extremal potentials can be provided.

Indeed, by Theorem~\ref{thm:maximo}, there exists a unique $V^* \in \bar B(0,1)$ such that 
  \begin{align*}
    \max \{ \lambda(V) \colon V \in \bar B(0,1) \} &= \lambda(V^*) 
    \le \dfrac12[u]_{s,p}^p + \int_{\Omega} V^*(x)|u|^p \, \di x \\
			&\le \dfrac12[u]_{s,p}^p + \int_{\Omega} \frac{|V^*(x)|}{\|V^*\|_{q;\Omega}}|u|^p \, \di x
  \end{align*}
for all $u \in \widetilde W^{s,p}(\Omega)$. Then
  \[
    \max \{ \lambda(V) \colon V \in \bar B(0,1) \} = \lambda(V^*) \le \lambda \left( \dfrac{|V^*|}{\|V^*\|_{q;\Omega}}\right).
  \]
Since $\tfrac{|V^*|}{\|V^*\|_{q;\Omega}} \in \partial B(0,1)$, then, by Theorem~\ref{thm:maximo}, $V^*$ is nonnegative  and $V^* \in \partial B(0,1)$. Moreover, by Lemma~\ref{lem:derivable}, we have that
  \[
    \int_{\Omega} W(x)|u^*|\, \di x = 0 \quad \text{for all } W \in T_{V^*}(\partial B(0,1)),
  \]
where $u^*$ is the positive eigenfunction of $\lambda(V^*)$ normalized by $\|u^*\|_p = 1$.
	
  This procedure proves the validity of the following result.
  \begin{thm}\label{teo:maxbola}
    Let $V^* \in \bar B(0,1)$ be the unique potential that satisfies
      \[
	    \lambda(V^*) = \max \{ \lambda(V) \colon V \in \bar B(0,1) \},
	  \]
    according to Theorem~\ref{thm:maximo}. Then $V^*$ is nonnegative, $V^* \in \partial B(0,1)$ and
	  \[
		\int_{\Omega} W(x)|u^*|\, \di x = 0 \quad \text{for all } W \in T_{V^*}(\partial B(0,1)),
	  \]
    where $u^*$ is the positive eigenfunction of $\lambda(V^*)$ normalized by $\|u^*\|_p = 1$.     
  \end{thm}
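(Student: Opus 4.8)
The plan is to assemble the statement from tools that are already in place: Theorem~\ref{thm:maximo} provides the existence and uniqueness of the maximizer $V^{*}$, a short symmetrization-type argument upgrades it to a nonnegative potential that saturates the norm constraint, and Lemma~\ref{lem:derivable} yields the first-order optimality condition. \emph{Step 1 (existence, uniqueness, non-triviality).} Since $\bar B(0,1)$ is a bounded, closed, convex subset of $L^{q}(\Omega)$, Theorem~\ref{thm:maximo} applied with $\C=\bar B(0,1)$ gives a unique $V^{*}\in\bar B(0,1)$ with $\lambda(V^{*})=\max\{\lambda(V)\colon V\in\bar B(0,1)\}$. I would first record that $V^{*}\not\equiv 0$: otherwise the maximum would equal $\lambda(0)$, but for the constant function $c$ with $0<c|\Omega|^{1/q}\le 1$ one has $c\in\bar B(0,1)$ and, reading $c$ as an additive shift in \eqref{first-eigenvalue}, $\lambda(c)=\lambda(0)+c>\lambda(0)$, a contradiction. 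Hence $\|V^{*}\|_{q;\Omega}>0$, so the normalization used below is legitimate.

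\emph{Step 2 (nonnegativity and $V^{*}\in\partial B(0,1)$).} For every $u\in\widetilde{W}^{s,p}(\Omega)$ with $\|u\|_{p}=1$, using $|V^{*}|\ge V^{*}$ pointwise and $\|V^{*}\|_{q;\Omega}\le 1$,
\[
  \lambda(V^{*})\le \tfrac12[u]_{s,p}^{p}+\int_{\Omega}V^{*}(x)|u|^{p}\,\di x\le \tfrac12[u]_{s,p}^{p}+\int_{\Omega}\frac{|V^{*}(x)|}{\|V^{*}\|_{q;\Omega}}\,|u|^{p}\,\di x .
\]
Taking the infimum over such $u$ in \eqref{first-eigenvalue} gives $\lambda(V^{*})\le\lambda\big(|V^{*}|/\|V^{*}\|_{q;\Omega}\big)$. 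Since $|V^{*}|/\|V^{*}\|_{q;\Omega}\in\partial B(0,1)\subset\bar B(0,1)$ and $V^{*}$ is the maximizer, equality must hold, so $|V^{*}|/\|V^{*}\|_{q;\Omega}$ is also a maximizer; by the uniqueness in Theorem~\ref{thm:maximo}, $V^{*}=|V^{*}|/\|V^{*}\|_{q;\Omega}$ a.e.\ in $\Omega$. Therefore $V^{*}\ge 0$ a.e., and taking $L^{q}$-norms forces $\|V^{*}\|_{q;\Omega}=1$, i.e.\ $V^{*}\in\partial B(0,1)$.

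\emph{Step 3 (first-order condition).} Fix $W\in T_{V^{*}}(\partial B(0,1))$ and a differentiable curve $\alpha\colon(-1,1)\to L^{q}(\Omega)$ with $\alpha(t)\in\partial B(0,1)$ for all $t$, $\alpha(0)=V^{*}$, and $\alpha'(0)=W$; such a curve exists, e.g.\ $\alpha(t)=(V^{*}+tW)/\|V^{*}+tW\|_{q;\Omega}$ for $|t|$ small (well defined since $\|V^{*}\|_{q;\Omega}=1$, with $\alpha'(0)=W$ because $\tfrac{d}{dt}\|V^{*}+tW\|_{q;\Omega}^{q}\big|_{t=0}=q\int_{\Omega}|V^{*}|^{q-2}V^{*}W\,\di x=0$ by the definition of $T_{V^{*}}(\partial B(0,1))$). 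Put $\tilde\lambda(t)\coloneqq\lambda(\alpha(t))$. Since $\alpha(t)\in\bar B(0,1)$ for every $t\in(-1,1)$, we get $\tilde\lambda(t)\le\lambda(V^{*})=\tilde\lambda(0)$, so $t=0$ is an interior maximum of $\tilde\lambda$. By Lemma~\ref{lem:derivable}, $\tilde\lambda$ is differentiable at $0$ with $\tilde\lambda'(0)=\int_{\Omega}W(x)|u^{*}|^{p}\,\di x$, where $u^{*}$ is the positive eigenfunction of $\lambda(V^{*})$ normalized by $\|u^{*}\|_{p}=1$; an interior maximum forces $\tilde\lambda'(0)=0$, hence $\int_{\Omega}W(x)|u^{*}|^{p}\,\di x=0$. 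As $W$ was an arbitrary element of $T_{V^{*}}(\partial B(0,1))$, this is the asserted identity.

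\emph{Main obstacle.} There is no serious difficulty: the statement is essentially a repackaging of Theorem~\ref{thm:maximo} and Lemma~\ref{lem:derivable}, formalizing the discussion that precedes it. The only points requiring a little care are the non-triviality $V^{*}\not\equiv 0$ in Step~1 (without it the normalization $|V^{*}|/\|V^{*}\|_{q;\Omega}$ is meaningless) and, in Step~3, exhibiting an admissible curve realizing a prescribed tangent vector $W$, which is routine because for $q\in(1,\infty)$ the sphere $\partial B(0,1)$ is a $C^{1}$ submanifold of $L^{q}(\Omega)$ and the radial retraction supplies the curve.
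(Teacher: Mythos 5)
Your proof is correct and follows the paper's route exactly: normalize $|V^*|/\|V^*\|_{q;\Omega}$, invoke the uniqueness in Theorem~\ref{thm:maximo} to get nonnegativity and $\|V^*\|_{q;\Omega}=1$, and then apply Lemma~\ref{lem:derivable} for the first-order condition. You usefully add the verification that $V^*\not\equiv 0$ (which the paper skips, yet is needed before dividing by $\|V^*\|_{q;\Omega}$), and you correctly arrive at $\int_\Omega W(x)|u^*|^p\,\di x=0$ — the missing exponent $p$ in the statement of Theorem~\ref{teo:maxbola} is evidently a typo, as Lemma~\ref{lem:derivable} makes clear.
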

	
Similarly, we have that
  \begin{thm}\label{teo:minbola}
    There exists $V_* \in \partial B(0,1)$ such that
      \[
		\lambda(V_*) = \min \{\lambda(V) \colon V \in \bar B(0,1) \}.
	  \]
	Moreover, $V_*$ is nonpositive, $\|V_*\|_{q;\Omega} = 1$ and     
	  \[
		\int_{\Omega} W(x)|u_*|\, \di x = 0 \quad 
		\text{for all } W \in T_{V_*}(\partial B(0,1)),
	  \]
	where $u_*$ is the positive eigenfunction of $\lambda(V_*)$ normalized by $\|u_*\|_p = 1$.      
  \end{thm}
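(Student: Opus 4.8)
The plan is to mirror the argument for Theorem~\ref{teo:maxbola} but with the roles of maximum and minimum exchanged, exploiting the fact that replacing $V$ by $-|V|$ can only decrease the value of $\lambda$. First I would invoke Theorem~\ref{thm:maximo} with $\mathcal{C}=\bar B(0,1)$ to obtain some $V_*\in\bar B(0,1)$ realizing $\min\{\lambda(V)\colon V\in\bar B(0,1)\}$. To see that $V_*$ can be taken nonpositive and on the sphere, I would argue as follows: given any $V\in\bar B(0,1)$ with $V\not\equiv 0$, let $u$ be any competitor in \eqref{first-eigenvalue} with $\|u\|_p=1$; then
\[
  \dfrac12[u]_{s,p}^p + \int_{\Omega} \left(-\dfrac{|V(x)|}{\|V\|_{q;\Omega}}\right)|u|^p\, \di x
  \le \dfrac12[u]_{s,p}^p + \int_{\Omega} V(x)|u|^p\, \di x,
\]
since $-|V|/\|V\|_{q;\Omega}\le -|V|\le V$ pointwise and $|u|^p\ge 0$. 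Taking the infimum over such $u$ gives $\lambda\big(-|V|/\|V\|_{q;\Omega}\big)\le\lambda(V)$, and $-|V|/\|V\|_{q;\Omega}\in\partial B(0,1)$. Hence the minimum over the ball equals the minimum over the sphere, and is attained at a nonpositive potential; by the description just obtained applied to $V_*$ itself (or by running the minimization directly over nonpositive potentials of unit norm), we may take $V_*$ nonpositive with $\|V_*\|_{q;\Omega}=1$.

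Next I would establish the orthogonality relation. Consider the curve $\tilde\lambda(t)=\lambda(V_t)$ where $V_t=\alpha(t)$ is any differentiable curve in $\partial B(0,1)$ with $\alpha(0)=V_*$ and $\alpha'(0)=W$ for a given $W\in T_{V_*}(\partial B(0,1))$. Since $V_*$ minimizes $\lambda$ over $\partial B(0,1)$ and $\tilde\lambda$ is differentiable at $t=0$ by Lemma~\ref{lem:derivable}, $t=0$ is an interior minimum of $\tilde\lambda$ on $(-1,1)$, so $\tilde\lambda'(0)=0$. By Lemma~\ref{lem:derivable},
\[
  0=\tilde\lambda'(0)=\int_{\Omega} W(x)|u_*|^p\, \di x,
\]
where $u_*$ is the positive eigenfunction associated to $\lambda(V_*)$ normalized by $\|u_*\|_p=1$. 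Since $W\in T_{V_*}(\partial B(0,1))$ was arbitrary, this is exactly the claimed identity (the paper writes $|u_*|$ but $u_*>0$ so $|u_*|=u_*$; and one should read the integrand as $|u_*|^p$, matching Lemma~\ref{lem:derivable}).

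The only genuinely delicate point is the realization of the minimum on the sphere and the nonpositivity of $V_*$: one must be slightly careful that the competitor $-|V|/\|V\|_{q;\Omega}$ still lies in $\bar B(0,1)$ (it does, with norm exactly $1$) and that passing to the infimum over $u$ preserves the inequality $\lambda(-|V|/\|V\|_{q;\Omega})\le\lambda(V)$, which is immediate since the inequality holds for every admissible $u$. A subsidiary point is that the argument giving $\tilde\lambda'(0)=0$ requires $V_*\in\partial B(0,1)$, so that the curve $\alpha$ stays in the constraint set and the directional derivative is genuinely two-sided; this is why establishing $\|V_*\|_{q;\Omega}=1$ must come first. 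Everything else — boundedness of $\bar B(0,1)$, weak closedness from convexity, lower semicontinuity of $\lambda$ under weak convergence of potentials — is already packaged in Theorem~\ref{thm:maximo} and Lemma~\ref{lem:derivable}, so no further compactness work is needed.
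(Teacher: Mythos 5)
Your argument is essentially the one the paper has in mind (the paper gives no explicit proof, only the phrase ``Similarly, we have that'' after establishing Theorem~\ref{teo:maxbola}), and it is correct. You rightly observe the one place where the min case differs structurally from the max case: for the maximum, uniqueness in Theorem~\ref{thm:maximo} forces $V^*=|V^*|/\|V^*\|_{q;\Omega}$, whereas for the minimum there is no uniqueness, so one simply replaces the minimizer by $-|V_*|/\|V_*\|_{q;\Omega}$, which is still a minimizer, nonpositive, and of unit norm. You also correctly flag that the exponent $p$ appears to be missing in the paper's display (it should read $\int_\Omega W\,|u_*|^p\,dx=0$ to match Lemma~\ref{lem:derivable}), and that establishing $V_*\in\partial B(0,1)$ must precede the variational argument so that the directional derivative is two-sided.

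One small point you pass over (as does the paper): the normalization $-|V_*|/\|V_*\|_{q;\Omega}$ presupposes $V_*\not\equiv 0$. This is easy to supply: if $u_0$ is the positive normalized eigenfunction for $V\equiv 0$ and $V\le 0$, $V\not\equiv 0$, then
\[
\lambda(V)\le \tfrac12[u_0]_{s,p}^p+\int_\Omega V\,|u_0|^p\,\di x<\tfrac12[u_0]_{s,p}^p=\lambda(0),
\]
so $0$ cannot be a minimizer. It would be worth one sentence to close this, but it does not affect the validity of your overall argument.
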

	
  \begin{cor}
    In the notation of Theorem~\ref{teo:maxbola} and \ref{teo:minbola}, we have $\Omega=\supp (V^*)=\supp (V_*)$ and there exist two constants $C^*$ and $C_*$ such that
      \begin{gather*}
	    |u^*(x)|^p = C^* |V^*(x)|^{q-1}, \\
	    |u_*(x)|^p = C_* |V_*(x)|^{q-1}, 
	  \end{gather*}
	for a.e. $x \in \Omega$.
  \end{cor}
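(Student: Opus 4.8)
The plan is to read off both identities directly from the first-order optimality conditions recorded in Theorems~\ref{teo:maxbola} and \ref{teo:minbola}, using only the positivity of the first eigenfunctions (Lemma~\ref{lema:signo-constante}) and the duality $(L^q(\Omega))'=L^{q'}(\Omega)$. I will treat the maximizer $V^*$ in detail and then indicate the (identical, up to signs) argument for $V_*$.

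First I would note that $|u^*|^p\in L^{q'}(\Omega)$, since $u^*\in\widetilde W^{s,p}(\Omega)\hookrightarrow L^{pq'}(\Omega)$ by Remark~\ref{rem:hipenq}; thus $W\mapsto\int_\Omega|u^*|^pW\,\di x$ is a bounded linear functional on $L^q(\Omega)$. Likewise $|V^*|^{q-2}V^*\in L^{q'}(\Omega)$ because $(q-1)q'=q$, and $\Lambda(W):=\int_\Omega|V^*|^{q-2}V^*W\,\di x$ defines a bounded linear functional on $L^q(\Omega)$ which is nonzero, since $\Lambda(V^*)=\|V^*\|_{q;\Omega}^q=1$ by Theorem~\ref{teo:maxbola}. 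The optimality condition from Theorem~\ref{teo:maxbola} (cf. Lemma~\ref{lem:derivable}) says precisely that $\int_\Omega|u^*|^pW\,\di x=0$ for every $W\in T_{V^*}(\partial B(0,1))=\ker\Lambda$. Since a linear functional that vanishes on the kernel of a nonzero linear functional is a scalar multiple of it, there is $C^*\in\R$ with
\[
\int_\Omega|u^*|^pW\,\di x = C^*\int_\Omega|V^*|^{q-2}V^*W\,\di x\qquad\text{for all }W\in L^q(\Omega),
\]
and by duality this means $|u^*|^p=C^*|V^*|^{q-2}V^*$ a.e. in $\Omega$. Because $V^*\ge0$ (Theorem~\ref{teo:maxbola}), $|V^*|^{q-2}V^*=|V^*|^{q-1}$, so $|u^*(x)|^p=C^*|V^*(x)|^{q-1}$ a.e.

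To finish I would check the sign of $C^*$ and deduce the support statement. Taking $W=V^*$ in the displayed identity gives $C^*=\int_\Omega|u^*|^pV^*\,\di x$, which is strictly positive because $u^*>0$ a.e. in $\Omega$ (Lemma~\ref{lema:signo-constante}) while $V^*\ge0$ with $\|V^*\|_{q;\Omega}=1$, hence $V^*>0$ on a set of positive measure. Consequently $|V^*|^{q-1}=(C^*)^{-1}|u^*|^p>0$ a.e. in $\Omega$, i.e. $V^*\ne0$ a.e. in $\Omega$, so $\Omega=\supp(V^*)$. For $V_*$ one repeats the argument with Theorem~\ref{teo:minbola}: it yields $|u_*|^p=c\,|V_*|^{q-2}V_*$ a.e. with $c=\int_\Omega|u_*|^pV_*\,\di x$; now $V_*\le0$ forces $|V_*|^{q-2}V_*=-|V_*|^{q-1}$ and $c<0$ (again by positivity of $u_*$), so setting $C_*:=-c>0$ we obtain $|u_*(x)|^p=C_*|V_*(x)|^{q-1}$ a.e.\ and, as before, $V_*\ne0$ a.e.\ in $\Omega$, i.e.\ $\Omega=\supp(V_*)$.

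I do not expect a genuine obstacle here: the content is essentially the first-order optimality conditions already established, and the only delicate points are the elementary functional-analytic step (a functional annihilating a closed hyperplane is proportional to its defining functional) and correctly fixing the sign of the multiplicative constant from the positivity of the eigenfunction together with the sign information on the optimal potential provided by Theorems~\ref{teo:maxbola} and~\ref{teo:minbola}.
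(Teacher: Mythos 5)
Your proof is correct and takes the approach the paper itself delegates to the cited reference: Theorems~\ref{teo:maxbola} and~\ref{teo:minbola} give the first-order (Lagrange multiplier) condition that $W\mapsto\int_\Omega|u^*|^pW\,\di x$ annihilates the tangent hyperplane $\ker\Lambda$ with $\Lambda(W)=\int_\Omega|V^*|^{q-2}V^*W\,\di x$, whence the two functionals are proportional and $L^q$--$L^{q'}$ duality yields the pointwise identity; positivity of $u^*$ (resp.\ $u_*$) and the sign of $V^*$ (resp.\ $V_*$) then fix the sign of the constant and force the optimal potential to be nonzero a.e., giving $\supp(V^*)=\supp(V_*)=\Omega$. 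The paper does not spell this out but merely cites the analogous Proposition~3.10 and Theorem~3.11 of \cite{FBDP}; your argument is a complete, self-contained version of that computation (modulo the obvious typo $|u^*|$ for $|u^*|^p$ in the statements of Theorems~\ref{teo:maxbola}--\ref{teo:minbola}, which you correctly read past).
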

  \begin{proof}
	See the proofs of Proposition~3.10 and Theorem~3.11 in \cite{FBDP}.
  \end{proof}

\subsection{Optimization problems in the class of rearrangements of a given potential} 

Let $V_0 \in L^q(\Omega)$ and $\mathcal{R}(V_0)$ be the set of rearrangements of $V_0$, that is $V \in \mathcal{R}(V_0)$ iff $V\colon \Omega\to\mathbb{R}$ is a measurable function and 
  \[
    |\{x\in\Omega\colon V(x)\ge t\}|=|\{x\in\Omega\colon V_0(x) \ge t\}|
  \]  
for any $t \in \R$.	
	
  \begin{rem}\label{rem:r1} 
    If $V \in \mathcal{R}(V_0)$ then $V \in L^q(\Omega)$ and $\|V\|_{q;\Omega} = \|V_0\|_{q;\Omega}$. See, for instance, \cite[Lemma 2.1]{MR870963}.	
  \end{rem}

Let $\overline{\mathcal{R}(V_0)}$ be the the weak closure of $\mathcal{R}(V_0)$.  In \cite[Theorem~6]{MR870963}, the author proves that $\overline{\mathcal{R}(V_0)}$ is convex, see also \cite{MR0192552,MR0234263}. Hence $\overline{\mathcal{R}(V_0)}$ is strongly closed. Then, by Remark \ref{rem:r1}, we have that $\overline{\mathcal{R}(V_0)}$ is a bounded closed convex subset of $L^q(\Omega)$.

Thus, by Theorems~\ref{thm:maximo}, we have that
  \begin{itemize}
    \item There exists a unique $V^* \in \overline{\mathcal{R}(V_0)}$ so that
		  \[
		    \lambda(V^*) = \max \{ \lambda(V) \colon V \in \overline{\mathcal{R}(V_0)} \};
		  \]
    \item There exists $V_* \in \overline{\mathcal{R}(V_0)}$ so that
		  \begin{equation}\label{eq:elmin1}
		    \lambda(V_*) = \min \{ \lambda(V) \colon V \in \overline{\mathcal{R}(V_0)} \}.
		  \end{equation}
  \end{itemize}

By \cite[Theorems~1 and 4]{MR870963}, there is $W\in\mathcal{R}(V_0)$ so that
  \[
    L(W)=\min\{L(V)\colon V\in\mathcal{R}(V_0)\}.
  \]
Then, by Lemma \ref{lema:minimizante}, we have that $V_* = W$ a.e in $\Omega$. Hence $V_* \in \mathcal{R}(V_0)$. Moreover, by Lemma \ref{lema:minimizante} and Theorem~5 in \cite{MR870963}, there is a decreasing function $\varphi\colon\mathbb{R}\to\mathbb{R}$ so that 	$V_* = \varphi\circ |u_*|^p$. Therefore we proved the next result.
	
  \begin{thm}\label{teo:minreo}
    Let $V_0 \in L^q(\Omega).$ There is a rearrangement $V_*$ of  $V_0$ in $\Omega$ such that 
      \[
        \lambda(V_*) = \min \{ \lambda(V) \colon V \in \mathcal{R}(V_0) \}.
      \]
    Moreover there exists a decreasing function $\varphi \colon \R \to \mathbb{R}$ so that $V_* = \varphi \circ |u_*|^p,$ where $u_*$ is the positive eigenfunction associated to $\lambda(V_*)$ such that $\|u_*\|_p = 1$.
	\end{thm}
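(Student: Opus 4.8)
The plan is to bootstrap the abstract optimization results of Section~\ref{optimizacion} together with Burton's rearrangement theory, exactly as set up in the paragraph preceding the statement. Recall from that discussion that $\overline{\mathcal{R}(V_0)}$ is a bounded, closed, convex subset of $L^q(\Omega)$, so Theorem~\ref{thm:maximo} furnishes some $V_* \in \overline{\mathcal{R}(V_0)}$ with $\lambda(V_*) = \min\{\lambda(V)\colon V\in\overline{\mathcal{R}(V_0)}\}$. Let $u_*$ be the positive eigenfunction associated to $\lambda(V_*)$ with $\|u_*\|_p = 1$ and put $f \coloneqq |u_*|^p$; since $u_* \in \widetilde{W}^{s,p}(\Omega) \subset L^{pq'}(\Omega)$ by Remark~\ref{rem:hipenq}, we have $f \in L^{q'}(\Omega)$. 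By Lemma~\ref{lema:minimizante}, $V_*$ is the \emph{unique} minimizer over $\overline{\mathcal{R}(V_0)}$ of the continuous linear functional $L(V) = \int_\Omega f(x) V(x)\, \di x$.

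Next I would apply Burton's results \cite[Theorems~1, 4 and 5]{MR870963}: for $f \in L^{q'}(\Omega)$, the functional $L$ attains its minimum over $\mathcal{R}(V_0)$ at some $W \in \mathcal{R}(V_0)$, and moreover $W$ can be chosen of the form $W = \varphi\circ f = \varphi\circ|u_*|^p$ with $\varphi\colon\R\to\R$ decreasing. The point linking this to $V_*$ is the elementary identity $\inf_{\overline{\mathcal{R}(V_0)}} L = \inf_{\mathcal{R}(V_0)} L$: indeed $L$ is weakly continuous on $L^q(\Omega)$, and every element of $\overline{\mathcal{R}(V_0)}$ is a weak limit of a sequence in $\mathcal{R}(V_0)$, along which $L$ converges. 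Hence $W$ also minimizes $L$ over $\overline{\mathcal{R}(V_0)}$, so by the uniqueness statement in Lemma~\ref{lema:minimizante} we get $V_* = W$ a.e. in $\Omega$; in particular $V_* \in \mathcal{R}(V_0)$ and $V_* = \varphi\circ|u_*|^p$.

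Finally, since $\mathcal{R}(V_0) \subset \overline{\mathcal{R}(V_0)}$ we have $\min\{\lambda(V)\colon V\in\mathcal{R}(V_0)\} \ge \min\{\lambda(V)\colon V\in\overline{\mathcal{R}(V_0)}\} = \lambda(V_*)$, and because $V_*$ is itself a rearrangement of $V_0$ this is an equality, which is the claim. The only real obstacle is bookkeeping --- checking the hypotheses of the cited rearrangement theorems (essentially just $f \in L^{q'}(\Omega)$, already verified) and confirming that their monotone-structure output is consistent with the uniqueness provided by Lemma~\ref{lema:minimizante}; all the genuinely analytic work (existence and uniqueness of the optimal potential, and the reduction of the optimality condition to minimization of a linear functional) has already been carried out, so no new estimate is needed.
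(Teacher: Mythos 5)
Your proof is correct and follows the same route as the paper: obtain $V_*$ from Theorem~\ref{thm:maximo} on the bounded closed convex set $\overline{\mathcal{R}(V_0)}$, reduce via Lemma~\ref{lema:minimizante} to the unique minimization of the linear functional $L$, and invoke Burton's Theorems~1, 4 and~5 to place the $L$-minimizer in $\mathcal{R}(V_0)$ with the monotone form $\varphi\circ|u_*|^p$. You are in fact a bit more explicit than the paper in spelling out why $\inf_{\mathcal{R}(V_0)} L = \inf_{\overline{\mathcal{R}(V_0)}} L$ (weak continuity of $L$), a step the paper leaves implicit; this makes the identification $V_*=W$ via the uniqueness in Lemma~\ref{lema:minimizante} airtight.
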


%\subsection{Optimization problems in a class of measurable function}
%
%Finally, we consider out optimization problems in the case that 
%  \[
%	  	\mathcal{C} = \left\{ V \in L^{\infty}(\Omega) \colon 0 \le V(x) \le 1 \text{ for all } x \in \Omega, \int_\Omega V(x)\, \di x = G \right\},
%  \]
%where $G>0$. Observe that for any $1\le q\le \infty,$ $\mathcal{C}$ is is a bounded closed convex subset of $L^q(\Omega)$.
%	  
%Again, by Theorems~\ref{thm:maximo}, we have that
%  \begin{itemize}
%    \item There exists a unique $V^* \in \mathcal{C}$ so that
%		  \[
%		    \lambda(V^*) = \max \{ \lambda(V) \colon V \in \mathcal{C} \};
%		  \]
%	\item There exists $V_* \in \mathcal{C}$ so that
%		  \begin{equation}\label{eq:elmin2}
%		    \lambda(V_*) = \min \{ \lambda(V) \colon V \in \mathcal{C} \}.
%		  \end{equation}
%  \end{itemize}
%	
%Moreover, by Lemma \ref{lema:minimizante}, $V_*$ is the unique minimizer of linear operator
%  \[
%    L(V) \coloneqq \int_{\Omega} V(x) |u_*|^p \, \di x
%  \]
%relative to $V\in\mathcal{C}.$ Therefore, by the Bathtub principle (\cite[Theorem~1.14]{MR1817225}) 
%  \[
%    V_*(x) = \chi_{A_s}(x) \quad \text{ or } \quad V_*(x) = \chi_{B_s}(x)
%  \] 
%where
%	\[
%		A_s\coloneqq\left\{x\in\Omega\colon u(x)<s\right\},\quad
%		B_s\coloneqq\left\{x\in\Omega\colon u(x)\le s\right\},
%	\]
%	and 
%	\[
%		s=\sup\left\{t\in\mathbb{R}\colon 
%		|\left\{x\in\Omega\colon u(x)<t\right\}|\le G\right\}.
%	\]
%	
%%%%%%%%%%%%%%%%%%%%%%%%%%%%%%%%%%%%%%%%%%%%%%%%%%%%%%%%%%%%%%%%%%%%%%%%%%%%%%%%%
%%%%%%%%%%%%%%%%%%%%%%%%%%%%%%%%%%%%%%%%%%%%%%%%%%%%%%%%%%%%%%%%%%%%%%%%%%%%%%%%%

\appendix

\section{Regularity of fractional p-eigenfunctions}

We begin by proving that the eigenfunctions are bounded.

  \begin{lem}\label{lema:cotainfty}
    Let $\Omega\subset\mathbb{R}^n$ be a bounded extension domain
     and $V\in L^q(\Omega)$ with $q \in (1,\infty) \cap (\tfrac{n}{sp},\infty)$.  If $u$ is an eigenfunction associated to $\lambda$ then $u\in L^{\infty}(\mathbb{R}^n)$.
  \end{lem}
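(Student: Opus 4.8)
The plan is to run a De Giorgi--Stampacchia iteration adapted to the fractional $p$-Laplacian, which is the standard route for this kind of statement (cf. \cite{BP} for the case $V\equiv 0$). Let $u$ be an eigenfunction associated to $\lambda$, so that
\[
  \h(u,v) + \int_\Omega V(x)|u|^{p-2}uv\,\di x = \lambda\int_{\R^n}|u|^{p-2}uv\,\di x
  \qquad\text{for all }v\in\widetilde W^{s,p}(\Omega).
\]
Since $|u|$ is also an eigenfunction (Remark~\ref{constant-sign}), it suffices to bound $u$ from above assuming $u\ge 0$. First I would fix levels $k>0$ and use $v=(u-k)_+\in\widetilde W^{s,p}(\Omega)$ as test function. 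The key algebraic inequality is that for $a\ge b$ one has
\[
  |a-b|^{p-2}(a-b)\big((a-k)_+-(b-k)_+\big)\ge |(a-k)_+-(b-k)_+|^p,
\]
so that $\h(u,(u-k)_+)\ge \tfrac12[(u-k)_+]_{s,p}^p$. On the right-hand side, the terms $\int V|u|^{p-1}(u-k)_+$ and $\lambda\int u^{p-1}(u-k)_+$ are controlled, on the set $A_k=\{u>k\}$, by $(|\lambda|+\|V\|_{q;\Omega}^{\,\text{loc}})$ times $\|u\|$-factors; using $u^{p-1}\le ((u-k)_++k)^{p-1}$ and Young's inequality one reduces everything to powers of $\|(u-k)_+\|$ and $|A_k|$.

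The heart of the argument is then the iteration. Using the fractional Sobolev inequality from Theorem~\ref{thm:teoembcont} (the embedding $\widetilde W^{s,p}(\Omega)\hookrightarrow L^{p_s^*}(\Omega)$ when $sp<n$, or any $L^r$, $r<\infty$, when $sp\ge n$) together with H\"older's inequality to absorb the $L^q$ norm of $V$ against $|A_k|^{1/q'-p/r}$ (this is exactly where the hypothesis $q>\tfrac{n}{sp}$, equivalently $pq'<p_s^*$, enters, cf. Remark~\ref{rem:hipenq}), one derives a recursive inequality of the form
\[
  a_{k_{j+1}} \le \frac{C\,b^j}{(k_{j+1}-k_j)^{\gamma}}\, a_{k_j}^{1+\beta}
\]
for a suitable increasing sequence of levels $k_j\nearrow k_\infty$, where $a_k$ denotes a quantity like $\int_{A_k}(u-k)^p$ or $|A_k|$, and $\beta>0$. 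A standard fast-geometric-convergence lemma (De Giorgi) then gives $a_{k_\infty}=0$ provided the starting level $k_0$ is chosen large enough in terms of $\|u\|_{L^p}$, $\|V\|_{q;\Omega}$, $|\lambda|$, $n$, $s$, $p$; this yields $u\le k_\infty<\infty$ a.e. Applying the same to $-u$ (or to $|u|$) gives $u\in L^\infty(\Omega)$, and since $u\equiv 0$ outside $\Omega$ we conclude $u\in L^\infty(\R^n)$.

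The main obstacle, and the only genuinely nonlocal point, is that the quadratic-form term $\h(u,(u-k)_+)$ does not localize: when estimating $\h(u,(u-k)_+)$ one must handle the cross-contribution coming from pairs $(x,y)$ with $x\in A_k$, $y\notin A_k$, and in particular the ``tail'' where $u(y)$ is small or zero. The clean way is to note that this cross term has a favorable sign (it only helps, since $(u(x)-u(y))^{p-1}(u-k)_+(x)\ge 0$ there), so one simply discards it after establishing $\h(u,(u-k)_+)\ge \tfrac12[(u-k)_+]_{s,p}^p$; thus the nonlocality, perhaps surprisingly, does not cost anything in the upper bound. The remaining work—choosing the level sequence, tracking the exponents $\gamma,\beta$, and verifying the De Giorgi lemma hypotheses—is routine, so I would only sketch it and refer to \cite{BP, DNPV} for the iteration details.
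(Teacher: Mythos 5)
Your plan is essentially identical to the paper's own proof: both run a De~Giorgi iteration with the truncations $(u-k)_+$ as test functions, use the pointwise inequality $|v_+(x)-v_+(y)|^p\le |v(x)-v(y)|^{p-2}(v(x)-v(y))(v_+(x)-v_+(y))$ to get $\h(u,(u-k)_+)\ge\tfrac12[(u-k)_+]_{s,p}^p$, invoke the Sobolev embedding of Theorem~\ref{thm:teoembcont} together with H\"older and the hypothesis $q>\tfrac{n}{sp}$ (so $pq'<p_s^*$) to produce a superlinear recursion, and close with the fast-geometric-convergence lemma. The only cosmetic difference is that the paper fixes the level sequence $c_k=1-2^{-k}$ and normalizes $\|u_+\|_{pq'}$ small via the $(p-1)$-homogeneity of the equation, whereas you would instead send the initial level $k_0$ to a large value; these are interchangeable.
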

  \begin{proof}
    In this proof we follow ideas from \cite{FP}.
	
	If $ps>n,$ by Theorem~\ref{thm:teoemb}, then the assertion holds.
	Then let us suppose that $sp \le n$. We will show that if $\|u_+\|_{pq'} \le \delta$ then $u_+$ is bounded, where $\delta > 0$ must be determined.
	
	For $k\in\mathbb{N}_0$ we define the function $u_k$ by
	\[
	  u_k\coloneqq(u-1+2^{-k})_+.
	\]
	Observe that, $u_0= u_+$ and for any $k\in\mathbb{N}_0$ we have that $u_k\in \widetilde{W}^{s,p}(\Omega)$, 
	  \begin{equation}\label{eq:lci1}
	    \begin{aligned}
	      &u_{k+1}\le u_k \text{ a.e. } \mathbb{R}^n,\\
	      &u <(2^{k+1}-1)u_k\text{ in } \{u_{k+1}>0\},\\
	      &\{u_{k+1}>0\}\subset\{u_k>2^{-(k+1)}\}.
	    \end{aligned}
      \end{equation}
	Now, since
	\[
	|v_+(x)-v_+(y)|^p\le |v(x)-v(y)|^{p-2}(v(x)-v(y))(v_+(x)-v_+(y))
	\quad\forall x,y\in\mathbb{R}^n,
	\]
	for any function $v \colon \R^n \to \R$,  we have that
	  \begin{align*}
	    \dfrac12[u_{k+1}]_{s,p}^p &\le \h(u,u_{k+1}) \\
                          &= \lambda \int_{\Omega} u^{p-1} w_{k+1} \, \di x - \int_{\Omega} V(x)u^{p-1}u_{k+1}\, \di x \\
	                      &\le |\lambda| \int_{\Omega} u^{p-1} w_{k+1} \, \di x + \int_{\Omega} V_-(x) u^{p-1}u_{k+1}\, \di x,
	  \end{align*}
	for all $k\in\mathbb{N}_0$. Then, by \eqref{eq:lci1} and H\"older inequality, we have that
	  \begin{equation}\label{eq:lci2}
	    \begin{aligned}
	        \dfrac12[u_{k+1}]_{s,p}^p &\le |\lambda|\int_{\Omega} u^{p-1} w_{k+1} \, \di x + \int_{\Omega} V_{-}(x)u^{p-1}u_{k+1}\, \di x \\
            &\le (2^{k+1}-1)^{p-1}\left(|\lambda| \|u_k\|_p^p + \int_{\Omega}V_{-}(x)u_k^p\, \di x \right) \\
	        &\le(2^{k+1}-1)^{p-1}\left(|\lambda| |\Omega|^{\tfrac{1}q} + \|V\|_{q;\Omega} \right) \|u_k\|_{pq'}^p
	    \end{aligned}
	  \end{equation}
    for all $k\in\mathbb{N}_0$.
	
	On the other hand, in the case $sp<n,$ using H\"older's inequality, Theorem~\ref{thm:teoembcont}, \eqref{eq:lci1}, and 	Chebyshev's inequality, for any $k\in\mathbb{N}_0$ we have that 
	  \begin{equation}\label{eq:lci3}
	    \begin{aligned}
	      \|u_{k+1}\|_{pq'}^p &\le \|u_{k+1}\|_{p_s^*}^p |\{u_{k+1}>0\}|^{\tfrac{1}{q^{\prime}} - \tfrac{p}{p_s^*}} \\
          &\le C[u_{k+1}]_{s,p}^p |\{u_{k+1}>0\}|^{\tfrac{sp}{n}-\tfrac{1}q} \\
	      &\le C[u_{k+1}]_{s,p}^p |\{u_{k}>2^{-(k+1)}\}|^{\tfrac{sp}{n}-\tfrac{1}q} \\
	      &\le C[u_{k+1}]_{s,p}^p \left( 2^{(k+1)p} \|u_k\|_{pq'}^p \right)^{q^\prime(\tfrac{sp}{n}-\tfrac{1}q)},
	    \end{aligned}
	  \end{equation}
	where $C$ is a constant independent of $k$. Then, by \eqref{eq:lci2} and \eqref{eq:lci3}, for any $k\in\mathbb{N}_0$ we have that 
	  \begin{equation}\label{eq:lci4}
	    \begin{aligned}
	      \|u_{k+1}\|_{pq'}^p &\le C \left(2^{(k+1)p}\|u_k\|_{pq'}^p \right)^{1+\alpha},
	    \end{aligned}
	  \end{equation}
	where $C$ is a constant independent of $k$ and 	$\alpha=q^\prime(\tfrac{sp}{n}-\tfrac{1}q)>0$.
	
	Similarly, in the case $sp=n,$ taking $r>pq^\prime$ and proceeding as in the previous case $sp<n$ (with $r$ in place of $p_s^*$), we have that \eqref{eq:lci4} holds with $\alpha=1-\tfrac{pq^\prime}r>0$.
	
	Therefore if $sp\le n$ then there exist $\alpha>0$ and a constant $C>1$  such that
	  \begin{equation}\label{eq:lci5}
	    \|u_{k+1}\|_{pq'}^p \le C^k \left(\|u_k\|^p_{pq'} \right)^{1+\alpha},
	\end{equation}
	for any $k\in\mathbb{N}_0$. Hence, if $\|u_0\|_{pq'}^p = \|u_+\|_{pq'}^p \le C^{\tfrac{-1}{\alpha^2}} \eqqcolon\delta^p$ then $u_{k}\to0$ in $L^{pq^\prime}(\Omega)$. On the oher hand $u_k\to(u-1)_+$ a.e in $\mathbb{R}^n,$ then $(u-1)_+\equiv0$ in $\mathbb{R}^n.$ Therefore $u_+$ is bounded. 
	
	Finally, taking $-u$ in place of $u$ we have  that $u_-$ is bounded if $\|u_-\|_{pq'} < \delta$. Therefore $u$ is bounded. 
\end{proof}

Finally we show a regularity result.

\begin{thm}\label{teo:regularidad}
	Let $\Omega\subset\mathbb{R}^n$ be a bounded extension domain, and $V\in L^\infty(\Omega).$ If $u$ is an eigenfunction
	associated to $\lambda$ then there is $\alpha\in(0,1)$
	such that $u\in C^{\alpha}(\overline{\Omega}).$
\end{thm}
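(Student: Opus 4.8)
The plan is to reduce the statement to the known regularity theory for the fractional $p$-Laplacian with a bounded source term. First I would use Lemma~\ref{lema:cotainfty}: every eigenfunction $u$ associated to $\lambda$ satisfies $u\in L^\infty(\R^n)$, hence $|u|^{p-2}u\in L^\infty(\Omega)$. Rewriting the weak formulation \eqref{eq:plantdebil} via Proposition~\ref{weak.form}, the function $u$ (extended by zero outside $\Omega$, see Remark~\ref{zero-extension}) is a weak solution of
\[
\begin{cases}
(-\Delta_p)^s u = g & \text{in } \Omega,\\
u = 0 & \text{in } \R^n\setminus\Omega,
\end{cases}
\qquad g(x):=\big(\lambda - V(x)\big)|u(x)|^{p-2}u(x).
\]
Since $V\in L^\infty(\Omega)$, we get $g\in L^\infty(\Omega)$, with $\|g\|_{\infty;\Omega}$ controlled by $|\lambda|$, $\|V\|_{\infty;\Omega}$ and $\|u\|_{\infty}$.

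Next I would invoke the regularity results for $(-\Delta_p)^s$. Interior Hölder continuity, $u\in C^{\alpha}_{\mathrm{loc}}(\Omega)$ for some $\alpha\in(0,1)$ depending only on $n,s,p$, follows from \cite{KKL} (see also \cite{IMS}) applied to weak solutions of $(-\Delta_p)^s u=g$ with $g\in L^\infty$ and $u\in L^\infty(\R^n)$. To reach regularity up to the boundary I would apply the global Hölder estimate of \cite{IMS}: for a bounded domain $\Omega$ with sufficiently regular boundary (e.g.\ $C^{1,1}$) and a bounded right-hand side, every weak solution vanishing in $\R^n\setminus\Omega$ belongs to $C^{\alpha}(\overline{\Omega})$, with $\alpha\in(0,1)$ and the $C^\alpha$-norm depending only on $n,s,p,\Omega$ and $\|g\|_{\infty;\Omega}$. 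Applying this to our $u$ and $g$ gives $u\in C^{\alpha}(\overline{\Omega})$, which is the desired conclusion.

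The only points requiring care are routine: one must check that the notion of weak solution in \eqref{eq:plantdebil} coincides with the one used in \cite{IMS,KKL}, which is the content of Proposition~\ref{weak.form}, and that the zero extension of $u$ is an admissible function there, guaranteed by Remark~\ref{zero-extension} together with Lemma~\ref{lema:cotainfty}. The genuine subtlety is that the global boundary estimate of \cite{IMS} needs the boundary of $\Omega$ to be regular enough (a uniform ball condition, e.g.\ $C^{1,1}$), which is strictly stronger than being merely an extension domain; since the eigenvalue problem \eqref{eq:autovalores} is posed on a smooth bounded $\Omega$ this entails no loss, and under the bare "bounded extension domain" hypothesis one would still obtain at least the interior statement $u\in C^{\alpha}_{\mathrm{loc}}(\Omega)$.
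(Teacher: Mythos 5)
Your proposal is correct and follows essentially the same route as the paper: invoke Lemma~\ref{lema:cotainfty} to get $u\in L^\infty$, observe that $g=(\lambda-V)|u|^{p-2}u\in L^\infty(\Omega)$, and then apply the global $C^\alpha$ estimate of \cite[Theorem~1.1]{IMS}. Your remark about the boundary regularity hypothesis (the ``bounded extension domain'' assumption stated in the theorem is weaker than what \cite{IMS} technically requires) is a fair and careful observation that the paper silently relies on the smoothness of $\Omega$ assumed in the introduction.
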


\begin{proof}
	By Lemma \ref{lema:cotainfty}, we have that $u\in L^{\infty}(\Omega).$
	Then $(\lambda-V(x))|u|^{p-2}u\in L^{\infty}(\Omega).$
	Therefore, by \cite[Theorem~1.1]{IMS}, there is  $\alpha\in(0,1)$
	such that $u\in C^{\alpha}(\overline{\Omega}).$	
\end{proof}

\section*{Acknowledgements}
This paper was partially supported by Universidad de Buenos Aires under grant UBACyT 20020130100283BA and by ANPCyT under grant PICT 2012-0153. J. Fern\'andez Bonder and Leandro M. Del Pezzo are members of CONICET.

\bibliographystyle{plain}
\bibliography{BiblioFract}
\end{document}